\numberwithin{equation}{section}
\newtheorem{theorem}[equation]{Theorem}
\newtheorem{lemma}[equation]{Lemma}
\newtheorem{corollary}[equation]{Corollary}
\newtheorem{proposition}[equation]{Proposition}
\theoremstyle{definition}
\newtheorem{definition}[equation]{Definition}
\newtheorem{remark}[equation]{Remark}
\newtheorem{example}[equation]{Example}
\newcommand{\LE}{%
\hbox{%
\vbox{\hrule width 0.35em height 0.04 em}%
\vbox{\offinterlineskip%
\hbox{\kern -0.02em\vrule height 0.65em width 0.04em \hspace{0.1em}}%
}%
}%
}
\newcommand{\la}{\lambda}
\newcommand{\T}{\mathcal{T}}
\newcommand{\pt}{\mathcal{PT}}
\newcommand{\bt}{\mathcal{BT}}
\newcommand{\sym}{\mathfrak{S}}
\DeclareMathOperator{\inv}{inv}
\DeclareMathOperator{\n}{neg}
\DeclareMathOperator{\crs}{cr}
\DeclareMathOperator{\al}{al}
\DeclareMathOperator{\fwex}{fwex}
\DeclareMathOperator{\row}{row}
\DeclareMathOperator{\col}{col}
\DeclareMathOperator{\diag}{diag}
\DeclareMathOperator{\wex}{wex}
\DeclareMathOperator{\so}{so}
\DeclareMathOperator{\zero}{zero}
\DeclareMathOperator{\one}{one}
\DeclareMathOperator{\two}{two}
\DeclareMathOperator{\drop}{drop}
\DeclareMathOperator{\cyc}{cyc}
\DeclareMathOperator{\dess}{dess}
\DeclareMathOperator{\zerorow}{zerorow}
\begin{document}
\title[Permutation statistics and {B}ruhat order in permutation tableaux of type $B$]{Permutation statistics and weak {B}ruhat order in permutation tableaux of type $B$}
\author{Soojin Cho}
\address{
 Department of Mathematics \\
 Ajou University\\
 Suwon  443-749, Korea}
\email{chosj@ajou.ac.kr}

\author{Kyoungsuk Park}
\address{
 Department of Mathematics \\
 Ajou University\\
 Suwon  443-749, Korea}
\email{bluemk00@ajou.ac.kr}

\thanks{This research was supported by Basic Science Research Program through the
National Research Foundation of Korea(NRF) funded by the Ministry of
Education(NRF2011-0012398).}

\keywords{alignment, crossing, inversion, Bruhat order, Coxeter group of type $B$, permutation, signed permutation, permutation tableaux of type $B$}

\subjclass[2010]{05A05; 05A15; 05A19}

\begin{abstract}
  Many important statistics of signed permutations are realized in the corresponding permutation tableaux or bare tableaux of type $B$: Alignments, crossings and inversions of signed permutations are realized in the corresponding permutation tableaux of type $B$, and the cycles of signed permutations are understood in the corresponding bare tableaux of type $B$. This leads us to relate the number of alignments and crossings with other statistics of signed permutations and also to characterize the covering relation in weak Bruhat order on Coxeter system of type $B$ in terms of permutation tableaux of type $B$.
\end{abstract}

\maketitle

\section{Introduction}
Since the notion of \emph{permutation tableaux} was introduced in \cite{ES-LW}, there have been a great deal of work on the subject in many different directions. Many equivalent combinatorial objects were introduced for various purposes, new statistics were defined in the relation to the permutation statistics, and extensions in terms of Lie types were considered. Works in the connection to PASEP(Partially Asymmetric Exclusion Process) in statistical physics \cite{SC-LW,SC-LW-markov} and Askey-Wilson polynomials \cite{SC-RS-DS-LW} are most remarkable among many related works on permutation tableaux.

One of the main importances of permutation tableaux relies on the fact that they are in bijection with permutations and therefore provide another point of view to look at permutations in both types $A$ and $B$. Some statistics on (signed) permutations are very well understood in permutation tableaux: There are at least two kinds of permutation tableaux and each of them has its own advantage over certain permutation statistics, and statistics naturally defined on permutation tableaux make it possible to develop further combinatorial theories on Coxeter groups.

The \emph{inversion number} of a (signed) permutation is an important statistic, especially in the theory of Coxeter groups: Inversion number is the \emph{length} of a (signed) permutation and hence it plays an essential role to understand the Bruhat order on the Coxeter groups of type $A$ and $B$. However, there was no known way to understand \emph{inversions} directly in the permutation tableaux. In the present paper we prove a theorem to realize the inversion number directly from the permutation tableaux (Theorem~\ref{bruhatinv}). The proof of Theorem~\ref{bruhatinv} enables us to realize the covering relation in weak Bruhat order on Coxeter groups of both types $A$ and $B$ in terms of permutation tableaux (Theorem \ref{weakbruhatcover}). Inversion numbers are closely related to \emph{alignments} and \emph{crossings}, which are relatively new statistics introduced by S. Corteel in \cite{SC} for (signed) permutations, and we started our work by closely looking at the alignments and crossings in permutation tableaux of type $B$, that leads us to prove an equation that writes the sum of alignments and crossings in terms of weak excedances and negatives (Theorem~\ref{sumofalcr}). Theorem~\ref{sumofalcr} resolves a problem (open problem 3) posed by Corteel et al. in \cite{SC-MJV-JK}.
We also show that the number of \emph{cycles} of a signed permutation counts the number of certain $1$'s in the corresponding \emph{bare tableau} of type $B$ and construct the inverse of the zigzag map from the bare tableaux of type $B$ to signed permutations.
We make a remark that our works on permutation statistics have been known for the cases of type $A$ (see Proposition~\ref{proptypeA}), and we extend the results to cases of type $B$. The work of ours on Bruhat order in permutation tableaux is new in both types $A$ and $B$.

The present paper is organized as follows:
In Section~\ref{preliminary} we introduce some necessary notions and related known results associated with signed permutations and permutation tableaux of type $B$. In Section~\ref{pt} we do work to understand alignments and crossings of signed permutations in permutation tableaux of type $B$.
Section~\ref{bruhat} is devoted to the work to understand inversion numbers in terms of permutation tableaux and hence the weak Bruhat order. Finally, in Section~\ref{bt}, we work on bare tableaux of type $B$ to understand the number of cycles in signed permutations.

%summarize the known results on the relation between permutation tableaux of both type $A$ and $B$ and the (signed) permutations.

\section{Preliminaries}\label{preliminary}
We introduce necessary notations, terms, and summarize known combinatorial results on permutation tableaux of type $B$ (and hence type $A$) in this section.

For a positive integer $n$, we use $[n]$ for the set $\{1,\ldots,n\}$ and $[\pm n]$ for the set $\{\pm 1, \ldots, \pm n\}$. A \emph{signed permutation} $\sigma$ \emph{of length} $n$ is a permutation on $[\pm n]$ satisfying $\sigma(-i)=-\sigma(i)$ for $i\in[n]$. The group of signed permutations is denoted by $\sym_{n}^{B}$, and the subgroup of (ordinary) permutations in $\sym_{n}^{B}$, which send positive numbers to positive numbers, is denoted by $\sym_{n}$.

We use two ways to write $\sigma\in\sym_{n}^{B}$: In \emph{one line notation} we write $\sigma=\sigma_{1} , \cdots , \sigma_{n}$ to mean that $\sigma(i)=\sigma_{i}$ for $i\in[n]$. In \emph{cycle notation} we write $\sigma=(a_{1,1} , \cdots , a_{1,k_{1}})\cdots(a_{m,1} , \cdots , a_{m,k_{m}})$ where $\{|a_{1,1}|,\ldots,|a_{1,k_{1}}|\}$, $\ldots$, $\{|a_{m,1}|,\ldots,|a_{m,k_{m}}|\}$ are mutually disjoint subsets of $[n]$, to mean that $\sigma(|a_{i,j}|)=a_{i,j+1}$ with the convention $a_{i,k_{i}+1}=a_{i,1}$ for each $i$. If $\sum_i k_i =n$ then the given cycle notation is called the \emph{full} cycle notation. Note that a single cycle $(a)\in\sym_{n}^{B}$ is the signed permutation $\sigma$ such that $\sigma(|a|)=a$ fixing all other elements. Hence, if $a>0$ then $(a)$ is the identity permutation and we usually omit $(a), a>0$, for a cycle notation of a signed permutation.  For example, $\sigma=4 , -2 , 1 , -3\in \sym_{4}^{B}$ means that $\sigma(1)=4$, $\sigma(2)=-2$, $\sigma(3)=1$, and $\sigma(4)=-3$, and a cycle notation for $\sigma$ is $\sigma=(1 , 4 , -3)(-2)$ and this is the full cycle notation of $\sigma$. Moreover, $\tau=-1,2,3\in \sym_{3}^{B}$ is the signed permutation such that $\tau(1)=-1$, $\tau(2)=2$, and $\tau(3)=3$, and $(-1)$ is a cycle notation and $(-1)(2)(3)$ is the full cycle notation for $\tau$.

We first give definitions of some statistics on signed permutations.

\begin{definition}
  Let $\sigma\in\sym_{n}^{B}$.
  \begin{itemize}
    \item $i\in[n]$ is a \emph{weak excedance} of $\sigma$ if $\sigma(i)\ge i$. The number of weak excedances of $\sigma$ is denoted by $\wex(\sigma)$.
    \item $i\in[n]$ is a \emph{drop} of $\sigma$ if $\sigma(i)<i$. The number of drops of $\sigma$ is denoted by $\drop(\sigma)$.
    \item The number of negative integers in the one line notation of $\sigma$ is denoted by $\n(\sigma)$.
    \item The number of cycles in the full cycle notation of $\sigma$ is denoted by $\cyc(\sigma)$.
    \item $\fwex(\sigma):=2 \wex(\sigma) + \n(\sigma)$.
  \end{itemize}
\end{definition}

For $\sigma=4,-2,1,-3\in \sym_{4}^{B}$, $\wex(\sigma)=1$, $\drop(\sigma)=3$,
 $\n(\sigma)=2$, $\cyc(\sigma)=2$, and $\fwex(\sigma)=4$, and for $\tau=-1,2,3\in\sym_{3}^{B}$, $\wex(\tau)=2$, $\drop(\tau)=1$, $\n(\tau)=1$, $\cyc(\tau)=3$, and $\fwex(\tau)=5$.
Note that $\wex(\sigma)+\drop(\sigma)=n$ for all $\sigma\in\sym_{n}^{B}$.

Corteel introduced \emph{alignments} and \emph{crossings} on permutations in \cite{SC}, and they were also defined for signed permutations in \cite{SC-MJV-JK}. We define alignments for signed permutations in a different way from the ones in \cite{SC-MJV-JK} for our arguments, while we use the same definition of crossings for signed permutations as in \cite{SC-MJV-JK}.

\begin{definition}\label{def_al}
  Let $\sigma\in\sym_{n}^{B}$.
  \begin{itemize}
    \item $A_{nest}(\sigma)=\{(i,j)\in [n]\times [n]\,|\,-i<-j<-\sigma(j)<-\sigma(i)\}$\\
        \indent\indent\indent\indent$\,\,\,\cup\,\{(i,j)\in [n]\times [n]\,|\,-i<j\le\sigma(j)<-\sigma(i)\}$\\
        \indent\indent\indent\indent$\,\,\,\cup\,\{(i,j)\in [n]\times [n]\,|\,i<j\le\sigma(j)<\sigma(i)\}$\\
        is the set of \emph{nestings} or \emph{alignments of nested type of $\sigma$}, and $\al_{nest}(\sigma)=|A_{nest}(\sigma)|$.
    \item $A_{EN}(\sigma)=\{(i,j)\in [n]\times[n]\,|\,-i<0<-\sigma(i)<\sigma(j)<j\}$\\
        \indent\indent\indent\indent$\,\cup\,\{(i,j)\in [n]\times[n]\,|\,i\le\sigma(i)<\sigma(j)<j\}$\\
        is the set of \emph{alignments of type $EN$ of $\sigma$}, and $\al_{EN}(\sigma)=|A_{EN}(\sigma)|$.
    \item $A_{NE}(\sigma)=\{(i,j)\in [n]\times [n]\,|\,\sigma(i)<i<j\le\sigma(j)\}$\\
        is the set of \emph{alignments of type $NE$ of $\sigma$}, and $\al_{NE}(\sigma)=|A_{NE}(\sigma)|$.
    \item $\al(\sigma)=\al_{nest}(\sigma)+\al_{EN}(\sigma)+\al_{NE}(\sigma)$ is the number of alignments of $\sigma$.
    \item $C(\sigma)=\{(i,j)\in[n]\times[n]\,|\,i<j\le\sigma(i)<\sigma(j)\}$\\
        \indent\indent\indent$\cup\,\{(i,j)\in [n]\times [n]\,|\,-i<-j<-\sigma(i)<-\sigma(j)\}$\\
        \indent\indent\indent$\cup\,\{(i,j)\in [n]\times [n]\,|\,-i<j\le -\sigma(i)<\sigma(j)\}$\\
        is the set of  \emph{crossings of $\sigma$}, and $\crs(\sigma)=|C(\sigma)|$ is the number of crossings of $\sigma$.
  \end{itemize}
\end{definition}

Our definition of alignments is consistent with the one in \cite{SC} for permutations in $\sym_{n}$.

\begin{example}\label{ex_alcr}
  Let $\sigma=-2,-4,5,3,1\in \sym_{5}^{B}$. Then $A_{nest}(\sigma)=\{(2,1),(5,4)\}$, $A_{EN}(\sigma)=\{(1,4)\}$, $A_{NE}(\sigma)=\{(1,3),(2,3)\}$, and $C(\sigma)=\{(5,2),(2,3)\}$. So, $\al(\sigma)=5$ and $\crs(\sigma)=2$. See Figure \ref{ex_alcrfig}.

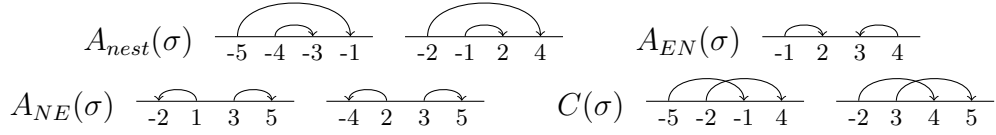
\begin{figure}[!ht]
  \begin{tikzpicture}\normalsize
    \node {$A_{nest}(\sigma)$};
  \end{tikzpicture}
  \begin{tikzpicture}\scriptsize
    \draw [-] (1.0,8.21) to (3.1,8.21);
    \node (a) at (1.3,8.0) {-5};
    \node (b) at (1.8,8.0) {-4};
    \node (c) at (2.3,8.0) {-3};
    \node (d) at (2.8,8.0) {-1};
    \path[->] (a) edge [out=90, in=90] (d);
    \path[->] (b) edge [out=90, in=90] (c);
  \end{tikzpicture}\quad
  \begin{tikzpicture}\scriptsize
    \draw [-] (1.0,8.21) to (3.1,8.21);
    \node (a) at (1.3,8.0) {-2};
    \node (b) at (1.8,8.0) {-1};
    \node (c) at (2.3,8.0) {2};
    \node (d) at (2.8,8.0) {4};
    \path[->] (a) edge [out=90, in=90] (d);
    \path[->] (b) edge [out=90, in=90] (c);
  \end{tikzpicture}\quad\quad
  \begin{tikzpicture}\normalsize
    \node {$A_{EN}(\sigma)$};
  \end{tikzpicture}
  \begin{tikzpicture}\scriptsize
    \draw [-] (1.0,8.21) to (3.1,8.21);
    \node (a) at (1.3,8.0) {-1};
    \node (b) at (1.8,8.0) {2};
    \node (c) at (2.3,8.0) {3};
    \node (d) at (2.8,8.0) {4};
    \path[->] (a) edge [out=90, in=90] (b);
    \path[->] (d) edge [out=90, in=90] (c);
  \end{tikzpicture}\\

  \begin{tikzpicture}\normalsize
    \node {$A_{NE}(\sigma)$};
  \end{tikzpicture}
  \begin{tikzpicture}\scriptsize
    \draw [-] (1.0,8.21) to (3.1,8.21);
    \node (a) at (1.3,8.0) {-2};
    \node (b) at (1.8,8.0) {1};
    \node (c) at (2.3,8.0) {3};
    \node (d) at (2.8,8.0) {5};
    \path[->] (b) edge [out=90, in=90] (a);
    \path[->] (c) edge [out=90, in=90] (d);
  \end{tikzpicture}\quad
  \begin{tikzpicture}\scriptsize
    \draw [-] (1.0,8.21) to (3.1,8.21);
    \node (a) at (1.3,8.0) {-4};
    \node (b) at (1.8,8.0) {2};
    \node (c) at (2.3,8.0) {3};
    \node (d) at (2.8,8.0) {5};
    \path[->] (b) edge [out=90, in=90] (a);
    \path[->] (c) edge [out=90, in=90] (d);
  \end{tikzpicture}\quad\quad
  \begin{tikzpicture}
    \node {$C(\sigma)$};
  \end{tikzpicture}
  \begin{tikzpicture}\scriptsize
    \draw [-] (1.0,8.21) to (3.1,8.21);
    \node (a) at (1.3,8.0) {-5};
    \node (b) at (1.8,8.0) {-2};
    \node (c) at (2.3,8.0) {-1};
    \node (d) at (2.8,8.0) {4};
    \path[->] (a) edge [out=90, in=90] (c);
    \path[->] (b) edge [out=90, in=90] (d);
  \end{tikzpicture}\quad
  \begin{tikzpicture}\scriptsize
    \draw [-] (1.0,8.21) to (3.1,8.21);
    \node (a) at (1.3,8.0) {-2};
    \node (b) at (1.8,8.0) {3};
    \node (c) at (2.3,8.0) {4};
    \node (d) at (2.8,8.0) {5};
    \path[->] (a) edge [out=90, in=90] (c);
    \path[->] (b) edge [out=90, in=90] (d);
  \end{tikzpicture}\vspace{-3mm}
  \caption{Alignments and crossings of $\sigma=-2,-4,5,3,1\in \sym_{5}^{B}$}\label{ex_alcrfig}
\end{figure}\normalsize
\end{example}

For two positive integers $k\leq n$, a \emph{$(k,n)$-diagram} is a left-justified diagram of boxes in a $k\times (n-k)$ rectangle with $\la_i$ boxes in the $i$th row, where $\la_1\geq \la_2 \geq \dots \geq \la_{k}\ge 0$. A \emph{shifted $(k,n)$-diagram} is a $(k,n)$-diagram together with the
stair shaped array of boxes added above, where the $j$th column (from the left) has $(n-k-j+1)$ additional boxes for $j\in[n-k]$. We call the (unique) $(k,n)$-diagram in a shifted $(k,n)$-diagram the \emph{$(k,n)$-subdiagram}, and the $(n-k)$ topmost boxes in a shifted $(k,n)$-diagram \emph{diagonals}. See Figure~\ref{young_shiftedyoung}.

Rows and columns of a $(k,n)$-diagram are labeled as follows: From the northeast corner to the southwest corner, follow the southeast border edges of the diagram and give labels $1,2,\ldots,n$ in order. If a south edge earned the label $i$ then the corresponding column is labeled by $i$, and if an east edge earned the label $j$ then the corresponding row is labeled by $j$. We put the labels on the left of each row and on the top of each column.  For a shifted $(k,n)$-diagram, label the rows and columns of the $(k,n)$-subdiagram with the numbers in $[n]$ first. Then label the remaining rows as follows; if the diagonal in a row is in the column labeled $i$, then the row is labeled by $-i$. We usually put the label on the left of each row and omit the column labels.
We use \verb"row"\,$i$, $i\in [\pm n]$, to denote the row labeled by $i$ and \verb"col"\,$j$ for the column labeled by $j$. See Figure~\ref{young_shiftedyoung}.

\tiny
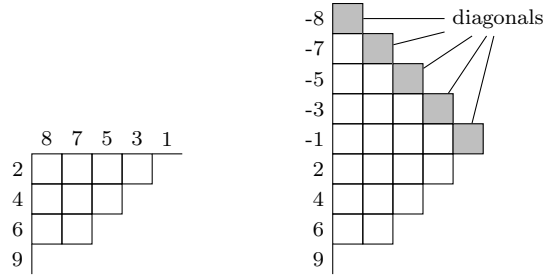
\begin{figure}[!ht]
  \centering
  \begin{tikzpicture}
    \tikzstyle{Element} = [draw, minimum width=4mm, minimum height=4mm, node distance=4mm, inner sep=0pt]
      \node [Element] [label=left:2, label=above:8] at (1.0,4.0) {};
      \node [Element] [label=left:4] at (1.0,3.6) {};
      \node [Element] [label=left:6] at (1.0,3.2) {};
      \draw[-] (0.8,3.0) to (0.8,2.6);
      \node at (0.61,2.8) {9};

      \node [Element] [label=above:7] at (1.4,4.0) {};
      \node [Element] at (1.4,3.6) {};
      \node [Element] at (1.4,3.2) {};

      \node [Element] [label=above:5] at (1.8,4.0) {};
      \node [Element] at (1.8,3.6) {};

      \node [Element] [label=above:3] at (2.2,4.0) {};

      \draw[-] (2.4,4.2) to (2.8,4.2);
      \node at (2.6,4.39) {1};
    %%%%%
      \node (d) at (7.0,6.0) {diagonals};
      \node (d1) [Element,fill=lightgray] [label=left:-8] at (5.0,6.0) {};
      \node [Element] [label=left:-7] at (5.0,5.6) {};
      \node [Element] [label=left:-5] at (5.0,5.2) {};
      \node [Element] [label=left:-3] at (5.0,4.8) {};
      \node [Element] [label=left:-1] at (5.0,4.4) {};
      \node [Element] [label=left:2] at (5.0,4.0) {};
      \node [Element] [label=left:4] at (5.0,3.6) {};
      \node [Element] [label=left:6] at (5.0,3.2) {};
      \draw[-] (4.8,3.0) to (4.8,2.6);
      \node at (4.61,2.8) {9};

      \node (d2) [Element,fill=lightgray] at (5.4,5.6) {};
      \node [Element] at (5.4,5.2) {};
      \node [Element] at (5.4,4.8) {};
      \node [Element] at (5.4,4.4) {};
      \node [Element] at (5.4,4.0) {};
      \node [Element] at (5.4,3.6) {};
      \node [Element] at (5.4,3.2) {};

      \node (d3) [Element,fill=lightgray] at (5.8,5.2) {};
      \node [Element] at (5.8,4.8) {};
      \node [Element] at (5.8,4.4) {};
      \node [Element] at (5.8,4.0) {};
      \node [Element] at (5.8,3.6) {};

      \node (d4) [Element,fill=lightgray] at (6.2,4.8) {};
      \node [Element] at (6.2,4.4) {};
      \node [Element] at (6.2,4.0) {};

      \node (d5) [Element,fill=lightgray] at (6.6,4.4) {};

      \draw [-] (d) to (d1);
      \draw [-] (d) to (d2);
      \draw [-] (d) to (d3);
      \draw [-] (d) to (d4);
      \draw [-] (d) to (d5);
  \end{tikzpicture}\vspace{-3mm}
  \caption{A $(4,9)$-diagram and a shifted $(4,9)$-diagram with labelings}\label{young_shiftedyoung}
\end{figure}
\normalsize

A \emph{permutation tableau of type $B$} is a $(0,1)$-filling of a shifted $(k,n)$-diagram that satisfies the following conditions:
\begin{itemize}
  \item[(1)] Every column has at least one $1$.
  \item[(2)] (\emph{$\LE$-condition} or \emph{$1$-hinge condition}) A box with a $1$ above it in the same column and a $1$ to the left in the same row has a $1$.
  \item[(3)] If a diagonal has a $0$, then there is no $1$ in the same row.
\end{itemize}

A \emph{bare tableau of type $B$} is a $(0,1)$-filling of a shifted $(k,n)$-diagram that satisfies the following conditions:
\begin{itemize}
  \item[(1)] Every column has at least one $1$.
  \item[(2)] (\emph{$0$-hinge condition}) A box with a $1$ above it in the same column and a $1$ to the left in the same row has a $0$.
  \item[(3)] If a diagonal has a $0$, then there is no $1$ in the same row.
\end{itemize}

Permutation tableaux (or bare tableaux) \emph{of type $A$} are permutation tableaux (or bare tableaux, respectively) of type $B$ such that the diagonals are filled with all $0$'s. The \emph{length} of any filling of a (shifted) $(k, n)$-diagram is $n$.

\tiny
\begin{figure}[!ht]
  \centering
  \begin{tikzpicture}
    \tikzstyle{Element} = [draw, minimum width=4mm, minimum height=4mm, node distance=4mm, inner sep=0pt]
      \node [Element] [label=left:-8] at (5.0,5.6) {1};
      \node [Element] [label=left:-6] at (5.0,5.2) {0};
      \node [Element] [label=left:-5] at (5.0,4.8) {1};
      \node [Element] [label=left:-3] at (5.0,4.4) {1};
      \node [Element] [label=left:1] at (5.0,4.0) {0};
      \node [Element] [label=left:2] at (5.0,3.6) {1};
      \node [Element] [label=left:4] at (5.0,3.2) {0};
      \node [Element] [label=left:7] at (5.0,2.8) {1};

      \node [Element] at (5.4,5.2) {0};
      \node [Element] at (5.4,4.8) {1};
      \node [Element] at (5.4,4.4) {1};
      \node [Element] at (5.4,4.0) {0};
      \node [Element] at (5.4,3.6) {1};
      \node [Element] at (5.4,3.2) {1};

      \node [Element] at (5.8,4.8) {1};
      \node [Element] at (5.8,4.4) {1};
      \node [Element] at (5.8,4.0) {0};
      \node [Element] at (5.8,3.6) {1};
      \node [Element] at (5.8,3.2) {1};

      \node [Element] at (6.2,4.4) {1};
      \node [Element] at (6.2,4.0) {1};
      \node [Element] at (6.2,3.6) {1};
  \end{tikzpicture}\quad\quad
  \begin{tikzpicture}
    \tikzstyle{Element} = [draw, minimum width=4mm, minimum height=4mm, node distance=4mm, inner sep=0pt]
      \node [Element] [label=left:-8] at (5.0,5.6) {1};
      \node [Element] [label=left:-6] at (5.0,5.2) {0};
      \node [Element] [label=left:-5] at (5.0,4.8) {1};
      \node [Element] [label=left:-3] at (5.0,4.4) {0};
      \node [Element] [label=left:1] at (5.0,4.0) {1};
      \node [Element] [label=left:2] at (5.0,3.6) {1};
      \node [Element] [label=left:4] at (5.0,3.2) {0};
      \node [Element] [label=left:7] at (5.0,2.8) {1};

      \node [Element] at (5.4,5.2) {0};
      \node [Element] at (5.4,4.8) {0};
      \node [Element] at (5.4,4.4) {0};
      \node [Element] at (5.4,4.0) {1};
      \node [Element] at (5.4,3.6) {0};
      \node [Element] at (5.4,3.2) {1};

      \node [Element] at (5.8,4.8) {1};
      \node [Element] at (5.8,4.4) {1};
      \node [Element] at (5.8,4.0) {0};
      \node [Element] at (5.8,3.6) {0};
      \node [Element] at (5.8,3.2) {0};

      \node [Element] at (6.2,4.4) {1};
      \node [Element] at (6.2,4.0) {0};
      \node [Element] at (6.2,3.6) {0};
  \end{tikzpicture}\vspace{-3mm}
  \caption{A permutation tableau and a bare tableau of type $B$ of length $8$}\label{ptb_btb}
\end{figure}
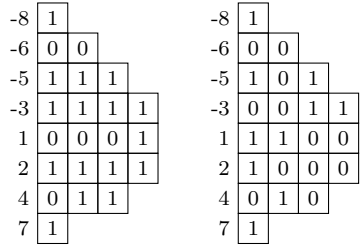\normalsize

The set of permutation tableaux of type $B$ (or type $A$) of length $n$ is denoted by $\pt_{n}^{B}$ (or $\pt_{n}$, respectively) and the set of bare tableaux of type $B$ (or type $A$, respectively) of length $n$ is denoted by $\bt_{n}^{B}$ (or $\bt_{n}$, respectively).
It is well known that permutation tableaux of type $B$ are in bijection with bare tableaux of type $B$. (\cite{SC-JK})

As in \cite{ES-LW}, we can represent a permutation tableau of type $B$ in a different way; fill in the boxes outside the diagram (within the $k\times (n-k)$ rectangle) with $2$'s; see Figure~\ref{pt_twos}. We call this representation the \emph{extended} representation of $\T$. Then the boxes with $2$'s bijectively correspond to the alignments of type $NE$ of the corresponding permutation for type $A$ (Theorem 14 in \cite{ES-LW}), which will be proved to be true for the case of type $B$ in Proposition~\ref{alignments}.

\begin{figure}[!ht]
  \centering
  \begin{tikzpicture}\tiny
    \tikzstyle{Element} = [draw, minimum width=4mm, minimum height=4mm, node distance=4mm, inner sep=0pt]
      \node [Element] [label=left:-8] at (5.0,5.6) {0};
      \node [Element] [label=left:-6] at (5.0,5.2) {1};
      \node [Element] [label=left:-5] at (5.0,4.8) {0};
      \node [Element] [label=left:-3] at (5.0,4.4) {0};
      \node [Element] [label=left:-1] at (5.0,4.0) {1};
      \node [Element] [label=left:2] at (5.0,3.6) {0};
      \node [Element] [label=left:4] at (5.0,3.2) {1};
      \node [Element] [label=left:7] at (5.0,2.8) {0};
      \draw[-] (4.8,2.6) to (4.8,2.2);
      \node at (4.61,2.4) {9};

      \node [Element] at (5.4,5.2) {1};
      \node [Element] at (5.4,4.8) {0};
      \node [Element] at (5.4,4.4) {0};
      \node [Element] at (5.4,4.0) {1};
      \node [Element] at (5.4,3.6) {1};
      \node [Element] at (5.4,3.2) {1};

      \node [Element] at (5.8,4.8) {0};
      \node [Element] at (5.8,4.4) {1};
      \node [Element] at (5.8,4.0) {1};
      \node [Element] at (5.8,3.6) {1};
      \node [Element] at (5.8,3.2) {1};

      \node [Element] at (6.2,4.4) {0};
      \node [Element] at (6.2,4.0) {1};
      \node [Element] at (6.2,3.6) {1};

      \node [Element] at (6.6,4) {1};

      \normalsize
      \node at (7.4,4.2) {$=$};
  \end{tikzpicture}
  \begin{tikzpicture}\tiny
    \tikzstyle{Element} = [draw, minimum width=4mm, minimum height=4mm, node distance=4mm, inner sep=0pt]
      \node [Element] [label=left:-8] at (5.0,5.6) {0};
      \node [Element] [label=left:-6] at (5.0,5.2) {1};
      \node [Element] [label=left:-5] at (5.0,4.8) {0};
      \node [Element] [label=left:-3] at (5.0,4.4) {0};
      \node [Element] [label=left:-1] at (5.0,4.0) {1};
      \node [Element] [label=left:2] at (5.0,3.6) {0};
      \node [Element] [label=left:4] at (5.0,3.2) {1};
      \node [Element] [label=left:7] at (5.0,2.8) {0};
      \node [Element,fill=lightgray] [label=left:9] at (5.0,2.4) {2};

      \node [Element] at (5.4,5.2) {1};
      \node [Element] at (5.4,4.8) {0};
      \node [Element] at (5.4,4.4) {0};
      \node [Element] at (5.4,4.0) {1};
      \node [Element] at (5.4,3.6) {1};
      \node [Element] at (5.4,3.2) {1};
      \node [Element,fill=lightgray] at (5.4,2.8) {2};
      \node [Element,fill=lightgray] at (5.4,2.4) {2};

      \node [Element] at (5.8,4.8) {0};
      \node [Element] at (5.8,4.4) {1};
      \node [Element] at (5.8,4.0) {1};
      \node [Element] at (5.8,3.6) {1};
      \node [Element] at (5.8,3.2) {1};
      \node [Element,fill=lightgray] at (5.8,2.8) {2};
      \node [Element,fill=lightgray] at (5.8,2.4) {2};

      \node [Element] at (6.2,4.4) {0};
      \node [Element] at (6.2,4.0) {1};
      \node [Element] at (6.2,3.6) {1};
      \node [Element,fill=lightgray] at (6.2,3.2) {2};
      \node [Element,fill=lightgray] at (6.2,2.8) {2};
      \node [Element,fill=lightgray] at (6.2,2.4) {2};

      \node [Element] at (6.6,4) {1};
      \node [Element,fill=lightgray] at (6.6,3.6) {2};
      \node [Element,fill=lightgray] at (6.6,3.2) {2};
      \node [Element,fill=lightgray] at (6.6,2.8) {2};
      \node [Element,fill=lightgray] at (6.6,2.4) {2};
  \end{tikzpicture}\vspace{-3mm}
  \caption{The extended representation of a permutation tableau of type $B$}\label{pt_twos}
\end{figure}
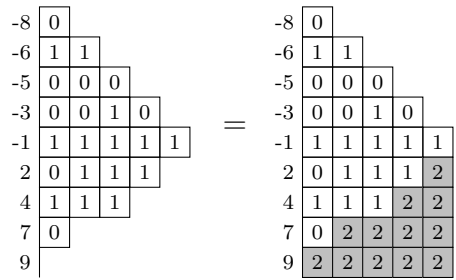\normalsize

E. Steingr\'{i}msson and L. Williams defined a bijective zigzag map $\Phi:\pt_{n}\rightarrow\sym_{n}$ in \cite{ES-LW} and it was extended to $\zeta:\pt_{n}^{B}\rightarrow\sym_{n}^{B}$ by Corteel and J. Kim in \cite{SC-JK}. A bijective zigzag map $\Phi_{bare}:\bt_{n}\rightarrow\sym_{n}$ was defined (\cite{AB,PN,SC-JK}) and it can be naturally extended to a bijection $\zeta_{bare}:\bt_{n}^{B}\rightarrow\sym_{n}^{B}$.

For a permutation tableau or a bare tableau $\T$ of type $B$,
a \emph{zigzag path from} $\verb"row"\,j$ (or $\verb"col"\,i$) in $\T$, is the path starting from the left of  $\verb"row"\,j$ (or the top of $\verb"col"\,i$, respectively), moving east (or south, respectively) until it meets the right of a row or the bottom of a column where it changes the direction to either south or east whenever it meets a $1$. Then the \emph{zigzag map} $\zeta: \pt_{n}^{B}\rightarrow\sym_{n}^{B}$ is defined as follows: For $\T\in\pt_{n}^{B}$ and $i\in[n]$,
\begin{itemize}
  \item[(1)] if $i$ is the label of a row of $\T$, then $\zeta(\T)(i)$ is the label of the row or the column that the zigzag path from $\verb"row"\,i$ ends,
  \item[(2)] if $-i$ is the label of a row of $\T$ and $\verb"row"\,(-i)$ has $0$ in its diagonal, then $\zeta(\T)(i)$ is the label of the row or the column that the zigzag path from $\verb"col"\,i$ ends, and
  \item[(3)] if $-i$ is the label of a row of $\T$ and $\verb"row"\,(-i)$ has $1$ in its diagonal, then $\zeta(\T)(-i)$ is the label of the row or the column that the zigzag path from $\verb"row"\,(-i)$ ends.
\end{itemize}

The zigzag map $\zeta_{bare}:\bt_{n}^{B}\rightarrow\sym_{n}^{B}$ is defined in the same way. We construct $\zeta_{bare}^{-1}$ in Section~\ref{bt} which shows that $\zeta_{bare}$ is bijective. Zigzag maps $\zeta$ and $\zeta_{bare}$ are described in Figure~\ref{zigzagex}.

\tiny
\begin{figure}[!ht]
  \centering
  \begin{tikzpicture}
    \tikzstyle{Element} = [draw, minimum width=4mm, minimum height=4mm, node distance=4mm, inner sep=0pt]
    \tikzstyle{Vertex} = [draw,circle, minimum size=3mm, inner sep=0pt]
      \node [Element] [label=left:-8] at (5.0,5.6) {\textcolor{gray}{1}};
      \node [Element] [label=left:-6] at (5.0,5.2) {\textcolor{gray}{0}};
      \node [Element] [label=left:-5] at (5.0,4.8) {\textcolor{gray}{1}};
      \node [Element] [label=left:-3] at (5.0,4.4) {\textcolor{gray}{1}};
      \node [Element] at (5.0,4.0) {\textcolor{gray}{0}};
      \node [Element] [label=left:2] at (5.0,3.6) {\textcolor{gray}{1}};
      \node [Element] [label=left:4] at (5.0,3.2) {\textcolor{gray}{0}};
      \node [Element] [label=left:7] at (5.0,2.8) {\textcolor{gray}{1}};

      \node [Element] at (5.4,5.2) {\textcolor{gray}{0}};
      \node [Element] at (5.4,4.8) {\textcolor{gray}{1}};
      \node [Element] at (5.4,4.4) {\textcolor{gray}{1}};
      \node [Element] at (5.4,4.0) {\textcolor{gray}{0}};
      \node [Element] at (5.4,3.6) {\textcolor{gray}{1}};
      \node [Element] at (5.4,3.2) {\textcolor{gray}{1}};

      \node [Element] at (5.8,4.8) {\textcolor{gray}{1}};
      \node [Element] at (5.8,4.4) {\textcolor{gray}{1}};
      \node [Element] at (5.8,4.0) {\textcolor{gray}{0}};
      \node [Element] at (5.8,3.6) {\textcolor{gray}{1}};
      \node [Element] at (5.8,3.2) {\textcolor{gray}{1}};

      \node [Element] at (6.2,4.4) {\textcolor{gray}{1}};
      \node [Element] at (6.2,4.0) {\textcolor{gray}{1}};
      \node [Element] at (6.2,3.6) {\textcolor{gray}{1}};

      \node [Vertex] (0) at (4.6,4.0) {\textbf{1}};
      \node [Vertex] (1) at (6.6,3.6) {\textbf{2}};
      \path [->] (0) edge (6.2,4.0);
      \path [->] (6.2,4.0) edge (6.2,3.6);
      \path [->] (6.2,3.6) edge (1);
  \end{tikzpicture}
  \begin{tikzpicture}
    \tikzstyle{Element} = [draw, minimum width=4mm, minimum height=4mm, node distance=4mm, inner sep=0pt]
    \tikzstyle{Vertex} = [draw,circle, minimum size=3mm, inner sep=0pt]
      \node [Element] [label=left:-8] at (5.0,5.6) {\textcolor{gray}{1}};
      \node [Element] [label=left:-6] at (5.0,5.2) {\textcolor{gray}{0}};
      \node [Element] [label=left:-5] at (5.0,4.8) {\textcolor{gray}{1}};
      \node [Element] [label=left:-3] at (5.0,4.4) {\textcolor{gray}{1}};
      \node [Element] [label=left:1] at (5.0,4.0) {\textcolor{gray}{0}};
      \node [Element] [label=left:2] at (5.0,3.6) {\textcolor{gray}{1}};
      \node [Element] [label=left:4] at (5.0,3.2) {\textcolor{gray}{0}};
      \node [Element] [label=left:7] at (5.0,2.8) {\textcolor{gray}{1}};

      \node [Element] at (5.4,5.2) {\textcolor{red}{0}};
      \node [Element] at (5.4,4.8) {\textcolor{gray}{1}};
      \node [Element] at (5.4,4.4) {\textcolor{gray}{1}};
      \node [Element] at (5.4,4.0) {\textcolor{gray}{0}};
      \node [Element] at (5.4,3.6) {\textcolor{gray}{1}};
      \node [Element] at (5.4,3.2) {\textcolor{gray}{1}};

      \node [Element] at (5.8,4.8) {\textcolor{gray}{1}};
      \node [Element] at (5.8,4.4) {\textcolor{gray}{1}};
      \node [Element] at (5.8,4.0) {\textcolor{gray}{0}};
      \node [Element] at (5.8,3.6) {\textcolor{gray}{1}};
      \node [Element] at (5.8,3.2) {\textcolor{gray}{1}};

      \node [Element] at (6.2,4.4) {\textcolor{gray}{1}};
      \node [Element] at (6.2,4.0) {\textcolor{gray}{1}};
      \node [Element] at (6.2,3.6) {\textcolor{gray}{1}};

      \node [Vertex] (0) at (5.4,5.6) {\textbf{6}};
      \node [Vertex] (1) at (6.6,4.0) {\textbf{1}};
      \path [->] (0) edge (5.4,4.8);
      \path [->] (5.4,4.8) edge (5.8,4.8);
      \path [->] (5.8,4.8) edge (5.8,4.4);
      \path [->] (5.8,4.4) edge (6.2,4.4);
      \path [->] (6.2,4.4) edge (6.2,4.0);
      \path [->] (6.2,4.0) edge (1);
  \end{tikzpicture}
  \begin{tikzpicture}
    \tikzstyle{Element} = [draw, minimum width=4mm, minimum height=4mm, node distance=4mm, inner sep=0pt]
    \tikzstyle{Vertex} = [draw,circle, minimum size=3mm, inner sep=0pt]
      \node [Element] at (5.0,5.6) {\textcolor{red}{1}};
      \node [Element] [label=left:-6] at (5.0,5.2) {\textcolor{gray}{0}};
      \node [Element] [label=left:-5] at (5.0,4.8) {\textcolor{gray}{1}};
      \node [Element] [label=left:-3] at (5.0,4.4) {\textcolor{gray}{1}};
      \node [Element] [label=left:1] at (5.0,4.0) {\textcolor{gray}{0}};
      \node [Element] [label=left:2] at (5.0,3.6) {\textcolor{gray}{1}};
      \node [Element] [label=left:4] at (5.0,3.2) {\textcolor{gray}{0}};
      \node [Element] [label=left:7] at (5.0,2.8) {\textcolor{gray}{1}};

      \node [Element] at (5.4,5.2) {\textcolor{gray}{0}};
      \node [Element] at (5.4,4.8) {\textcolor{gray}{1}};
      \node [Element] at (5.4,4.4) {\textcolor{gray}{1}};
      \node [Element] at (5.4,4.0) {\textcolor{gray}{0}};
      \node [Element] at (5.4,3.6) {\textcolor{gray}{1}};
      \node [Element] at (5.4,3.2) {\textcolor{gray}{1}};

      \node [Element] at (5.8,4.8) {\textcolor{gray}{1}};
      \node [Element] at (5.8,4.4) {\textcolor{gray}{1}};
      \node [Element] at (5.8,4.0) {\textcolor{gray}{0}};
      \node [Element] at (5.8,3.6) {\textcolor{gray}{1}};
      \node [Element] at (5.8,3.2) {\textcolor{gray}{1}};

      \node [Element] at (6.2,4.4) {\textcolor{gray}{1}};
      \node [Element] at (6.2,4.0) {\textcolor{gray}{1}};
      \node [Element] at (6.2,3.6) {\textcolor{gray}{1}};

      \node [Vertex] (0) at (4.6,5.6) {\textbf{-8}};
      \node [Vertex] (1) at (6.2,3.2) {\textbf{3}};
      \path [->] (0) edge (5.0,5.6);
      \path [->] (5.0,5.6) edge (5.0,4.8);
      \path [->] (5.0,4.8) edge (5.4,4.8);
      \path [->] (5.4,4.8) edge (5.4,4.4);
      \path [->] (5.4,4.4) edge (5.8,4.4);
      \path [->] (5.8,4.4) edge (5.8,3.6);
      \path [->] (5.8,3.6) edge (6.2,3.6);
      \path [->] (6.2,3.6) edge (1);
  \end{tikzpicture}\quad\quad\quad\quad\quad\quad\quad
  \begin{tikzpicture}
    \tikzstyle{Element} = [draw, minimum width=4mm, minimum height=4mm, node distance=4mm, inner sep=0pt]
    \tikzstyle{Vertex} = [draw,circle, minimum size=3mm, inner sep=0pt]
      \node [Element] [label=left:-8] at (5.0,5.6) {\textcolor{gray}{1}};
      \node [Element] [label=left:-6] at (5.0,5.2) {\textcolor{gray}{0}};
      \node [Element] [label=left:-5] at (5.0,4.8) {\textcolor{gray}{1}};
      \node [Element] [label=left:-3] at (5.0,4.4) {\textcolor{gray}{0}};
      \node [Element] at (5.0,4.0) {\textcolor{gray}{1}};
      \node [Element] [label=left:2] at (5.0,3.6) {\textcolor{gray}{1}};
      \node [Element] [label=left:4] at (5.0,3.2) {\textcolor{gray}{0}};
      \node [Element] [label=left:7] at (5.0,2.8) {\textcolor{gray}{1}};

      \node [Element] at (5.4,5.2) {\textcolor{gray}{0}};
      \node [Element] at (5.4,4.8) {\textcolor{gray}{0}};
      \node [Element] at (5.4,4.4) {\textcolor{gray}{0}};
      \node [Element] at (5.4,4.0) {\textcolor{gray}{1}};
      \node [Element] at (5.4,3.6) {\textcolor{gray}{0}};
      \node [Element] at (5.4,3.2) {\textcolor{gray}{1}};

      \node [Element] at (5.8,4.8) {\textcolor{gray}{1}};
      \node [Element] at (5.8,4.4) {\textcolor{gray}{1}};
      \node [Element] at (5.8,4.0) {\textcolor{gray}{0}};
      \node [Element] at (5.8,3.6) {\textcolor{gray}{0}};
      \node [Element] at (5.8,3.2) {\textcolor{gray}{0}};

      \node [Element] at (6.2,4.4) {\textcolor{gray}{1}};
      \node [Element] at (6.2,4.0) {\textcolor{gray}{0}};
      \node [Element] at (6.2,3.6) {\textcolor{gray}{0}};

      \node [Vertex] (0) at (4.6,4.0) {\textbf{1}};
      \node [Vertex] (1) at (6.6,3.6) {\textbf{2}};
      \path [->] (0) edge (5.0,4.0);
      \path [->] (5.0,4.0) edge (5.0,3.6);
      \path [->] (5.0,3.6) edge (1);
  \end{tikzpicture}
  \begin{tikzpicture}
    \tikzstyle{Element} = [draw, minimum width=4mm, minimum height=4mm, node distance=4mm, inner sep=0pt]
    \tikzstyle{Vertex} = [draw,circle, minimum size=3mm, inner sep=0pt]
      \node [Element] [label=left:-8] at (5.0,5.6) {\textcolor{gray}{1}};
      \node [Element] [label=left:-6] at (5.0,5.2) {\textcolor{gray}{0}};
      \node [Element] [label=left:-5] at (5.0,4.8) {\textcolor{gray}{1}};
      \node [Element] [label=left:-3] at (5.0,4.4) {\textcolor{gray}{0}};
      \node [Element] [label=left:1] at (5.0,4.0) {\textcolor{gray}{1}};
      \node [Element] [label=left:2] at (5.0,3.6) {\textcolor{gray}{1}};
      \node [Element] [label=left:4] at (5.0,3.2) {\textcolor{gray}{0}};
      \node [Element] [label=left:7] at (5.0,2.8) {\textcolor{gray}{1}};

      \node [Element] at (5.4,5.2) {\textcolor{red}{0}};
      \node [Element] at (5.4,4.8) {\textcolor{gray}{0}};
      \node [Element] at (5.4,4.4) {\textcolor{gray}{0}};
      \node [Element] at (5.4,4.0) {\textcolor{gray}{1}};
      \node [Element] at (5.4,3.6) {\textcolor{gray}{0}};
      \node [Element] at (5.4,3.2) {\textcolor{gray}{1}};

      \node [Element] at (5.8,4.8) {\textcolor{gray}{1}};
      \node [Element] at (5.8,4.4) {\textcolor{gray}{1}};
      \node [Element] at (5.8,4.0) {\textcolor{gray}{0}};
      \node [Element] at (5.8,3.6) {\textcolor{gray}{0}};
      \node [Element] at (5.8,3.2) {\textcolor{gray}{0}};

      \node [Element] at (6.2,4.4) {\textcolor{gray}{1}};
      \node [Element] at (6.2,4.0) {\textcolor{gray}{0}};
      \node [Element] at (6.2,3.6) {\textcolor{gray}{0}};

      \node [Vertex] (0) at (5.4,5.6) {\textbf{6}};
      \node [Vertex] (1) at (6.6,4.0) {\textbf{1}};
      \path [->] (0) edge (5.4,4.0);
      \path [->] (5.4,4.0) edge (1);
  \end{tikzpicture}
  \begin{tikzpicture}
    \tikzstyle{Element} = [draw, minimum width=4mm, minimum height=4mm, node distance=4mm, inner sep=0pt]
    \tikzstyle{Vertex} = [draw,circle, minimum size=3mm, inner sep=0pt]
      \node [Element] at (5.0,5.6) {\textcolor{red}{1}};
      \node [Element] [label=left:-6] at (5.0,5.2) {\textcolor{gray}{0}};
      \node [Element] [label=left:-5] at (5.0,4.8) {\textcolor{gray}{1}};
      \node [Element] [label=left:-3] at (5.0,4.4) {\textcolor{gray}{0}};
      \node [Element] [label=left:1] at (5.0,4.0) {\textcolor{gray}{1}};
      \node [Element] [label=left:2] at (5.0,3.6) {\textcolor{gray}{1}};
      \node [Element] [label=left:4] at (5.0,3.2) {\textcolor{gray}{0}};
      \node [Element] [label=left:7] at (5.0,2.8) {\textcolor{gray}{1}};

      \node [Element] at (5.4,5.2) {\textcolor{gray}{0}};
      \node [Element] at (5.4,4.8) {\textcolor{gray}{0}};
      \node [Element] at (5.4,4.4) {\textcolor{gray}{0}};
      \node [Element] at (5.4,4.0) {\textcolor{gray}{1}};
      \node [Element] at (5.4,3.6) {\textcolor{gray}{0}};
      \node [Element] at (5.4,3.2) {\textcolor{gray}{1}};

      \node [Element] at (5.8,4.8) {\textcolor{gray}{1}};
      \node [Element] at (5.8,4.4) {\textcolor{gray}{1}};
      \node [Element] at (5.8,4.0) {\textcolor{gray}{0}};
      \node [Element] at (5.8,3.6) {\textcolor{gray}{0}};
      \node [Element] at (5.8,3.2) {\textcolor{gray}{0}};

      \node [Element] at (6.2,4.4) {\textcolor{gray}{1}};
      \node [Element] at (6.2,4.0) {\textcolor{gray}{0}};
      \node [Element] at (6.2,3.6) {\textcolor{gray}{0}};

      \node [Vertex] (0) at (4.6,5.6) {\textbf{-8}};
      \node [Vertex] (1) at (6.2,3.2) {\textbf{3}};
      \path [->] (0) edge (5.0,5.6);
      \path [->] (5.0,5.6) edge (5.0,4.8);
      \path [->] (5.0,4.8) edge (5.8,4.8);
      \path [->] (5.8,4.8) edge (5.8,4.4);
      \path [->] (5.8,4.4) edge (6.2,4.4);
      \path [->] (6.2,4.4) edge (1);
  \end{tikzpicture}\normalsize
  \vspace{-3mm}
  \caption{Zigzag maps in permutation tableaux and bare tableaux of type $B$}\label{zigzagex}
\end{figure}
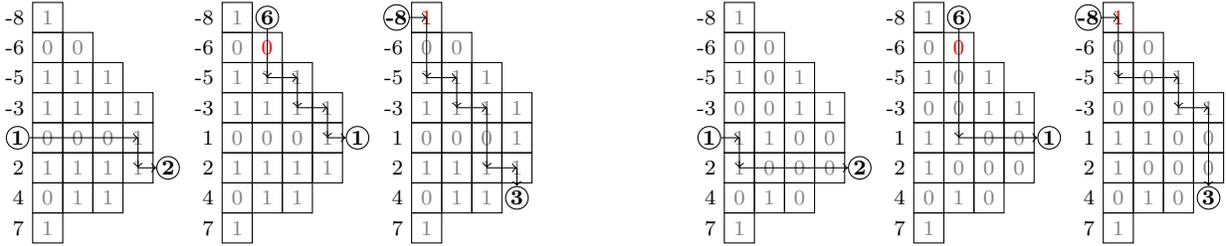\normalsize

There are three types of zeros in a permutation tableau. Note that there are two zigzag paths passing through a given $0$, one passing vertically and the other passing horizontally.

\begin{definition}\label{def_zero}
  Let $\T\in\pt_{n}^{B}$.
   \begin{itemize}
     \item A $0$ in $\T$ is \emph{of type $EE$} if two zigzag paths passing through the $0$ are both from rows of $\T$. The number of $0$'s of type $EE$ in $\T$ is denoted by $\zero_{EE}(\T)$.
     \item A $0$ in $\T$ is \emph{of type $NN$} if two zigzag paths passing through the $0$ are both from columns of $\T$. The number of $0$'s of type $NN$ in $\T$ is denoted by $\zero_{NN}(\T)$.
     \item A $0$ in $\T$ is \emph{of type $EN$} if it is not in a zero row labeled by a negative integer and a zigzag path passing through the $0$ is from a row and the other is from a column of $\T$. The number of $0$'s of type $EN$ in $\T$ is denoted by $\zero_{EN}(\T)$.
     \item A $0$ in $\T$ is \emph{non-typed} if it is not given a type of $EE$, $NN$, or $EN$.
     \item The number of typed $0$'s in $\T$ is denoted by $\zero(\T)$; $$\zero(\T)=\zero_{EE}(\T)+\zero_{NN}(\T)+\zero_{EN}(\T)\,.$$
     \item The numbers of $1$'s and $2$'s in the extended representation of $\T$ are denoted by  $\one(\T)$ and $\two(\T)$, respectively.
   \end{itemize}
\end{definition}

\begin{remark} Note that a $0$ in \emph{a zero row labeled by a negative integer} is either of type $EE$ or  non-typed, and non-typed $0$'s appear only in zero rows labeled by a negative integer. The definition of $\zero(\T)$ is consistent with the one for permutation tableaux of type $A$, which is given in \cite{ES-LW}.
\end{remark}

\begin{lemma}\label{non-typed zero}
The number of non-typed $0$'s in the zero row $\verb"row"\,(-i)$, $i\in[n]$, is the number of zero rows above $\verb"row"\,(-i)$, including $\verb"row"\,(-i)$ itself.
\end{lemma}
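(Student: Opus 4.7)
The plan is to identify non-typed $0$'s in $\verb"row"\,(-i)$ with forward zigzag paths originating from the tops of certain columns.

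Since $\verb"row"\,(-i)$ is all zeros, the horizontal zigzag path through any $0$ in this row cannot turn and must originate from the left of $\verb"row"\,(-i)$ itself. Therefore a $0$ in $\verb"row"\,(-i)$ can only be of type $EE$ (its vertical path also from a row) or non-typed (its vertical path from the top of a column): type $NN$ requires both paths to come from columns, and type $EN$ is explicitly excluded for zero rows with negative labels. Letting $P_{k'}$ denote the forward zigzag path starting at the top of column $k'$, and noting that distinct $0$'s in $\verb"row"\,(-i)$ are traversed southward by distinct vertical zigzag paths, the number of non-typed $0$'s in $\verb"row"\,(-i)$ equals the number of columns $k'$ for which $P_{k'}$ passes through $\verb"row"\,(-i)$ going south. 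Since columns correspond bijectively to negative row labels via $k' \leftrightarrow -k'$, it suffices to show that $P_{k'}$ passes through $\verb"row"\,(-i)$ going south if and only if $k' \ge i$ and $\verb"row"\,(-k')$ is itself a zero row.

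The ``only if'' direction is straightforward. If $k' < i$, then $\verb"row"\,(-k')$ lies strictly south of $\verb"row"\,(-i)$, so $P_{k'}$ begins south of $\verb"row"\,(-i)$ and cannot travel north. If $k' \ge i$ but $\verb"row"\,(-k')$ is non-zero, then $P_{k'}$ turns east at its very first cell (the diagonal of column $k'$, which carries the value $1$); but the diagonal of a negative-labeled row is its easternmost cell in this paper's shifted $(k,n)$-diagram convention, so $P_{k'}$ exits the diagram east immediately and never reaches any row south of $\verb"row"\,(-k')$.

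The main obstacle is the ``if'' direction, which requires a careful trajectory analysis. Suppose $\verb"row"\,(-k')$ is a zero row with $k' \ge i$; then $P_{k'}$ starts at a $0$ (the diagonal of column $k'$) and moves south. It can turn east only at a $1$, hence only inside a non-zero row; and every non-zero negative-labeled row has its diagonal $1$ at its east-end, so whenever $P_{k'}$ turns east in such a row $\verb"row"\,(-j)$ with $j > i$ (as must be the case while above $\verb"row"\,(-i)$), the path is forced to turn south again in some column of label $\ge j > i$, which still contains $\verb"row"\,(-i)$. Each east-turn strictly decreases the current column label, so only finitely many are possible. Moreover $P_{k'}$ cannot exit east through a zero row (no $1$ to trigger the turn) or through a non-zero negative-labeled row (the diagonal $1$ intercepts it); the only possible east-exits are through positive-labeled rows, all of which lie south of $\verb"row"\,(-i)$. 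Consequently $P_{k'}$ must cross $\verb"row"\,(-i)$ going south before any exit of the diagram becomes possible. The lemma follows, since the columns $k'$ thus identified are in bijection with the zero rows at or above $\verb"row"\,(-i)$.
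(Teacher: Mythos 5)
Your proof is correct and takes essentially the same route as the paper's: both identify the non-typed $0$'s of row $(-i)$ with the vertical zigzag paths issuing from the tops of the columns $k'\ge i$ whose diagonal entry is $0$ (equivalently, whose row $(-k')$ is a zero row). The only difference is that you spell out the trajectory analysis showing that such a path must cross row $(-i)$ heading south, a step the paper simply asserts.
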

\begin{proof} Suppose that $\verb"col"\,\alpha$, $\alpha\ge i$, has $0$ in its diagonal, then
the zigzag path from $\verb"col"\,\alpha$ must pass a $0$ in $\verb"row"\,(-i)$, and such $0$ is non-typed. Conversely, each non-typed $0$ in $\verb"row"\,(-i)$ corresponds to a unique $0$ on a diagonal, if we follow (backwards) the zigzag path passing the non-typed $0$ vertically. This completes the proof.
\end{proof}

We introduce some statistics on permutation tableaux or bare tableaux of type $B$.
\begin{definition}
  Let $\T$ be a permutation tableau or a bare tableau of type $B$.
  \begin{itemize}
    \item A $1$ is an \emph{essential one} of $\T$, if it is the topmost $1$ in its column or the leftmost $1$ in its row. A $1$ is a \emph{doubly essential one} of $\T$ if it is both the topmost $1$ in its column and the leftmost $1$ in its row. The number of doubly essential ones of $\T$ is denoted by $\dess(\T)$.
    \item A $1$ is a \emph{superfluous one} of $\T$ if it is not the topmost $1$ in its column. The number of superfluous ones of $\T$ is denoted by $\so(\T)$.
    \item The number of rows of $\T$ with `positive labels', is denoted by $\row(\T)$.
    \item The number of zero rows (filled with all $0$'s) of $\T$ with `positive labels' is denoted by $\zerorow(\T)$.
  \end{itemize}
\end{definition}

\begin{remark} It is easy to see that each zero row of $\T\in\pt_{n}^{B}$ (or $\T\in\bt_{n}^{B}$) with positive label is corresponding to a fixed point of $\zeta(\T)$ (or $\zeta_{bare}(\T)$, respectively).
\end{remark}

Certain statistics on permutation tableaux and bare tableaux (of type $B$) correspond to statistics on (signed) permutations through the zigzag maps $\zeta$ and $\zeta_{bare}$:
\begin{proposition}[\cite{ES-LW,AB}]\label{proptypeA}
 For $\T\in\pt_{n}$ and $\T_{0}\in\bt_{n}$,
 \begin{enumerate}
   \item $\wex(\zeta(\T))=\row(\T)$,  $\crs(\zeta(\T))=\so(\T)$, $\al_{nest}(\zeta(\T))+\al_{EN}(\zeta(\T))=\zero(\T)$, and $\al_{NE}(\zeta(\T))=\two(\T)$,
   \item $\cyc(\zeta_{bare}(\T_{0}))=\dess(\T_{0})+\zerorow(\T_{0})$.
 \end{enumerate}
\end{proposition}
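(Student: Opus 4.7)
The plan is to prove each identity by tracking zigzag paths in $\T$ (or $\T_{0}$) and identifying their local features with statistics of $\zeta(\T)$ (or $\zeta_{bare}(\T_{0})$). The central observation is that every cell of the extended representation of $\T$ is visited by exactly two zigzag paths: at a $1$, both the horizontal and vertical paths turn; at a $0$ or a $2$, both paths pass straight through. The endpoints of the zigzag paths are the values of $\zeta(\T)$, so the local configuration at any cell translates into an inequality among these values.

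For $\wex(\zeta(\T))=\row(\T)$, I would argue that a zigzag starting from the row labeled $i$ only moves east or south and so exits either at the east end of a row with label $\ge i$ or at the bottom of some column with label strictly less than $i$ (using the monotonicity of labels along the southeast border); thus $\zeta(\T)(i)\ge i$ iff $i$ is a row label. For $\crs(\zeta(\T))=\so(\T)$, each superfluous $1$ is associated with two zigzag paths that cross there: the horizontal path turning south (starting from some index $i$) and the vertical path turning east (ultimately sourced from another index $j$ whose horizontal path turned south at a higher $1$ in the same column). Unwinding the definitions, the pair $(i,j)$ satisfies $i<j\le\zeta(\T)(i)<\zeta(\T)(j)$, and this yields a bijection between superfluous $1$'s and the set $C(\zeta(\T))$.

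For $\al_{nest}(\zeta(\T))+\al_{EN}(\zeta(\T))=\zero(\T)$, each typed $0$ is passed by two zigzag paths that do not turn there; the three types $EE$, $NN$, and $EN$ match the three flavors of nestings and $EN$-alignments (two row-started paths giving a nesting, two column-started paths giving a nesting with swapped roles, and one of each giving an $EN$-alignment), and a case-by-case check verifies the inequalities in Definition~\ref{def_al}. The identity $\al_{NE}(\zeta(\T))=\two(\T)$ is Theorem~14 of \cite{ES-LW}: a $2$ at the intersection of the row labeled $r$ and the column labeled $c$ lies outside the diagram, which forces $c<r$ by monotonicity of the border labels, and the two paths passing through this $2$ start from the row labeled $r$ (a weak excedance) and arrive at the bottom of the column labeled $c$ (a drop), so that $\zeta(\T)(c)<c<r\le\zeta(\T)(r)$, exactly the $NE$-alignment condition.

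For the bare tableau identity $\cyc(\zeta_{bare}(\T_{0}))=\dess(\T_{0})+\zerorow(\T_{0})$, the $0$-hinge condition forces both paths to turn at every $1$, so the zigzag structure assembles into a disjoint union of closed loops rather than terminating open paths. I would partition the cycles of $\zeta_{bare}(\T_{0})$ into two families: cycles of length at least $2$ each contain a unique doubly essential $1$, namely the $1$ on the loop that is both the leftmost of its row and the topmost of its column; and cycles of length $1$ (i.e., fixed points $i>0$) correspond bijectively to zero rows of positive label. The main obstacle, and the step requiring the most care, is verifying that every non-singleton loop does contain a \emph{unique} doubly essential $1$: existence follows by a minimality argument (pick the $1$ on the loop with smallest combined row and column coordinates), while uniqueness requires showing that no other $1$ traversed by the loop can simultaneously be the leftmost of its row and the topmost of its column.
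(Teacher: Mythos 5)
Your overall strategy---tracking the two zigzag paths through each cell and translating the local configuration into inequalities among the values of $\zeta(\T)$---is the right one; it is how the sources \cite{ES-LW,AB} that the paper quotes prove this proposition, and it is exactly how the present paper proves the type~$B$ generalizations (Proposition~\ref{alignments} for the alignment and crossing statistics, Theorem~\ref{cycle} for the cycle count). Two points need repair, however. First, in your argument for $\wex(\zeta(\T))=\row(\T)$ the inequality is backwards: a path launched from the row labeled $i$ only ever occupies boxes lying in a row labeled $r\ge i$ and a column labeled $c>r$, so if it exits at the bottom of a column, that column's label is strictly \emph{greater} than $i$, not less. As written (``the bottom of some column with label strictly less than $i$'') your claim would make such an $i$ a drop and would contradict the very conclusion you draw from it. Moreover, the ``iff'' also requires the complementary fact, which you do not state, that a path launched from the top of the column labeled $j$ turns east at the topmost $1$ of that column and thereafter only visits boxes with labels below $j$, so it terminates at a label strictly less than $j$.

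Second, for $\cyc(\zeta_{bare}(\T_{0}))=\dess(\T_{0})+\zerorow(\T_{0})$ you have correctly isolated the crux---that each non-singleton cycle meets exactly one doubly essential $1$---but you leave it unproved, and the ``closed loop'' framing obscures what actually has to be checked: the zigzag paths are open arcs from boundary to boundary, and one must show that the $1$'s traversed while running once around a cycle of the permutation constitute precisely one connected component of the adjacency graph on the $1$'s, that this component is a tree (this is where the $0$-hinge condition enters), and that a tree has exactly one vertex that is simultaneously topmost in its column and leftmost in its row. The paper's Theorem~\ref{cycle} handles this in the opposite direction: it takes the forest of trees on the $1$'s as the primary object, declares the root of each tree to be its unique doubly essential $1$ (sitting in the box $(\min c_i,\max c_i)$ of the eventual cycle), and proves by induction on the number of vertices that attaching a root to one or two subtrees composes their path cycles into a single cycle of $\zeta_{bare}(\T_{0})$. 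Some version of that induction, or an equivalent argument, is needed; without it your count of doubly essential $1$'s is not yet tied to the count of cycles. The remaining parts of your sketch (superfluous $1$'s versus crossings, typed $0$'s versus nestings and $EN$-alignments, and $2$'s versus $NE$-alignments) follow the paper's case analysis and are sound, apart from the harmless slip of speaking of zigzag paths ``passing through'' a $2$, which lies outside the tableau and is visited by no path.
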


\begin{proposition}[\cite{SC-JK, SC-MJV-JK}]\label{propdiagram}
For $\T\in\pt_{n}^{B}$, $\wex(\zeta(\T))=\row(\T)$, and $\crs(\zeta(\T))=\so(\T)$.
\end{proposition}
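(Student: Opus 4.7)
The plan is to establish both equalities by tracing zigzag paths: the first via a direct case analysis on the three clauses of the definition of $\zeta$, and the second via an explicit bijection between superfluous $1$'s and crossings.

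For $\wex(\zeta(\T))=\row(\T)$, I will show that for each $i\in[n]$, $i$ is a weak excedance of $\sigma=\zeta(\T)$ if and only if $i$ is a positive row label of $\T$. Each $i$ falls into exactly one of three cases coming from the zigzag-map definition. In case (1), $i$ is itself a row label of $\T$; the zigzag from row $i$ moves only east and south, and using the structural fact that in the labeled diagram a cell at row-position $i'$, column-position $j'$ exists iff its row label is less than its column label, one verifies that any terminating row or column label must be $\geq i$, hence $\sigma(i)\geq i$. In case (2), $-i$ is a row label with $0$ on its diagonal; condition (3) forces row $(-i)$ to be a zero row, and condition (1) ensures that the zigzag from column $i$ eventually encounters a $1$ below the diagonal and turns east, after which all row and column labels subsequently visited lie strictly below $i$, yielding $\sigma(i)<i$. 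In case (3), $-i$ is a row label with $1$ on its diagonal; since the diagonal cell is the rightmost cell of row $(-i)$ and carries the value $1$, the zigzag from row $(-i)$ must turn south before exiting and cannot reach any row above row $(-i)$, so every possible endpoint $\sigma(-i)$ strictly exceeds $-i$, giving $\sigma(i)=-\sigma(-i)<i$.

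For $\crs(\zeta(\T))=\so(\T)$, I will construct an explicit bijection between superfluous $1$'s of $\T$ and crossings of $\sigma$. At any $1$ at position $(r,c)$, two zigzag paths meet: the row-path (entering east in row $r$ and turning south) and the col-path (entering south in column $c$ and turning east). A $1$ is topmost in its column if and only if its col-path begins at the top of column $c$; hence the superfluous $1$'s are precisely those whose col-path has first turned east-to-south at another $1$ immediately above it in the same column. For a superfluous $1$ at $(r,c)$, let $a,b\in[\pm n]$ be the indices associated with the row-path and the col-path via $\zeta$. The proposed map $(r,c)\mapsto(a,b)$ (or its signed variant, depending on the signs of $a,b$) will be shown to land in $C(\sigma)$; a case analysis according to the signs of $a$ and $b$ exhibits a matching with each of the three clauses in Definition~\ref{def_al} defining $C(\sigma)$. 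The inverse, given a crossing, identifies the unique superfluous $1$ at which the corresponding two paths meet as row-path and col-path.

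The main obstacle will be the second bijection: verifying surjectivity clause-by-clause onto $C(\sigma)$ and checking that topmost $1$'s never contribute crossings. This requires careful case analysis on the three possible starting patterns of zigzag paths in $\T$ (positive-row start, column start with diagonal $0$, or negative-row start with diagonal $1$), together with tracking how the paths interact with the $\LE$-condition and the diagonal region of the shifted $(k,n)$-diagram. Once the bijection is verified clause-by-clause, the identity $\crs(\zeta(\T))=\so(\T)$ follows immediately.
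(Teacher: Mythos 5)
The paper does not actually prove this proposition: it is stated with the citation \cite{SC-JK, SC-MJV-JK} and imported as known, so there is no in-paper argument to compare yours against. Judged on its own terms, the first half of your proposal (that $i$ is a weak excedance of $\zeta(\T)$ exactly when $i$ is a positive row label of $\T$) is essentially a complete and correct path-tracing argument, consistent with what the paper later records as Lemma~\ref{labeling}. One small slip: a box in row $r$ and column $c$ of the labeled diagram exists \emph{only if} $r<c$, not if and only if (the shape is a Young diagram, so not every such box is present), but you use only the forward implication, so nothing breaks.

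The second half, however, is a plan rather than a proof. You name the map from superfluous $1$'s to pairs $(a,b)$ of path indices, but the entire content of the identity $\crs(\zeta(\T))=\so(\T)$ is precisely the verification you defer: that for each sign pattern of $(a,b)$ the pair satisfies one of the three inequality chains defining $C(\sigma)$ in Definition~\ref{def_al}, that distinct superfluous $1$'s give distinct crossings, that every crossing arises this way, and that topmost $1$'s contribute nothing. Writing ``will be shown'' and ``the main obstacle will be'' does not discharge these steps. Two concrete issues you would meet in carrying them out: (i) at a $1$ on the diagonal (which is automatically topmost in its column) there is no vertically arriving zigzag path at all --- the path from the corresponding negative row arrives horizontally --- so your dichotomy ``col-path begins at the top of the column versus turned at a $1$ immediately above'' does not cover all topmost $1$'s; (ii) the third clause of $C(\sigma)$, namely $-i<j\le -\sigma(i)<\sigma(j)$, pairs a drop with negative value against a weak excedance, so the corresponding superfluous $1$ is met by one path of type (3) and one of type (1) of the zigzag definition, and the inequality $j\le-\sigma(i)$ must be extracted from where the two paths turn; this is exactly the kind of clause-by-clause geometric analysis the paper does carry out for $0$'s in Proposition~\ref{alignments}, and an analogous argument for $1$'s is what is missing here.
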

We show that $\al_{nest}(\zeta(\T))+\al_{EN}(\zeta(\T))=\zero(\T)$ and $\al_{NE}(\zeta(\T))=\two(\T)$ also hold for a permutation tableau $\T$ of type $B$ in Section~\ref{pt}. Further, we prove that $\cyc(\zeta_{bare}(\T_{0}))=\dess(\T_{0})+\zerorow(\T_{0})$ also holds for a bare tableau $\T_0$  of type $B$ in Section~\ref{bt}. The following lemma is clear from the definition of $\zeta$ and $\zeta_{bare}$, and Proposition~\ref{propdiagram}.

\begin{lemma}\label{labeling}
  Let $\sigma=\zeta(\T) \in \sym_{n}^B$ for $\T\in\pt_{n}^{B}$ or $\sigma=\zeta_{bare}(\T)\in\sym_{n}^{B}$ for $\T\in\bt_{n}^{B}$. Then for $i\in[n]$,
  \begin{itemize}
    \item $i\in\wex(\sigma)$ if and only if $i$ is a row label of $\T$,
    \item $i\in\drop(\sigma)$ if and only if $(-i)$ is a row label of $\T$, and
    \item $i\in\n(\sigma)$ if and only if $(-i)$ is a row label of $\T$ and the diagonal of $\verb"row"\,(-i)$ is filled with $1$.
  \end{itemize}
\end{lemma}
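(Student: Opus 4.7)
The plan is to verify the three bullets in turn by analyzing the zigzag paths and leveraging Proposition~\ref{propdiagram} to upgrade a set inclusion to a set equality. First I note that the labeling convention partitions $[n]$ into the positive row labels of $\T$ and the column labels of the $(k,n)$-subdiagram, and the latter are in bijection with the negative row labels $-i$ in the shifted part via the diagonal correspondence. Hence for each $i\in[n]$ exactly one of ``$i$ is a positive row label of $\T$'' or ``$-i$ is a negative row label of $\T$'' holds, which turns the lemma into a case analysis.

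The crucial step is the first bullet: I would show the set inclusion that every positive row label $i$ satisfies $\sigma(i)\geq i$. Since the zigzag path from row $i$ only moves east or south, it stays in rows at or below row $i$; if it exits at the east edge of a row $r$, then $r$ is at or below row $i$ so its label is $\geq i$, while if it exits at the south edge of a column $c$, then the bottom of $c$ is at or below row $i$ and its south edge is encountered after row $i$'s east edge in the border walk, giving $c>i$. Combined with $\wex(\sigma)=\row(\T)$ from Proposition~\ref{propdiagram}, this forces the set equality. The second bullet then follows immediately by complementation, using the partition above.

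For the third bullet, suppose $-i$ is a negative row label. If the diagonal of row $-i$ is $0$, then by condition (3) the entire row $-i$ is zero, and $\sigma(i)$ is the endpoint label of the zigzag path from column $i$, which passes south through the $0$ diagonal into the diagram below. If the diagonal is $1$, then $\sigma(-i)$ is the endpoint of the zigzag from row $-i$ instead. The main obstacle in both cases is ruling out that the endpoint is the east edge of some shifted row $-i'$, which would produce a negative endpoint label. The key observation closing this is: by condition (3), any shifted row $-i'$ containing a $1$ must have diagonal $1$, so a zigzag path that has turned east inside row $-i'$ will always reach that diagonal $1$ before exiting east and be redirected south. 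Consequently the endpoint label is positive, which gives $\sigma(i)>0$ (so $i\notin\n(\sigma)$) in the first sub-case and $\sigma(i)=-\sigma(-i)<0$ (so $i\in\n(\sigma)$) in the second. The bare tableau version follows by the identical argument, since the change from the $1$-hinge to the $0$-hinge condition plays no role in the endpoint analysis.
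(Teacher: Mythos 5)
The paper gives no argument for this lemma---it is simply declared ``clear from the definition of $\zeta$ and $\zeta_{bare}$, and Proposition~\ref{propdiagram}''---and your write-up is a correct, detailed elaboration of exactly that route: the southeast-monotonicity of zigzag paths gives the inclusion of positive row labels into weak excedances, the cardinality identity $\wex(\zeta(\T))=\row(\T)$ upgrades it to equality, and your observation that condition (3) forces any nonzero shifted row to redirect paths south at its diagonal correctly rules out negative endpoint labels for the third bullet.

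One small caveat: Proposition~\ref{propdiagram} is stated only for $\T\in\pt_{n}^{B}$, so your ``inclusion plus equal cardinality'' step is not literally licensed for the $\bt_{n}^{B}$ half of the lemma (the paper is equally cavalier here). The cleanest fix, which your own endpoint analysis already supplies, is to prove the reverse inclusion directly and drop the counting: if $i$ is a column label with diagonal $1$ in $\verb"row"\,(-i)$ then $\sigma(i)=-\sigma(-i)<0<i$, and if the diagonal is $0$ then the path from $\verb"col"\,i$ can only exit at a row intersecting a column weakly east of $\verb"col"\,i$ or at the bottom of such a column, forcing $\sigma(i)<i$ once condition (1) (every column contains a $1$) excludes the path exiting the bottom of $\verb"col"\,i$ itself. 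This argument uses only conditions (1) and (3), so it applies verbatim to both $\pt_{n}^{B}$ and $\bt_{n}^{B}$ and makes the lemma self-contained.
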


\section{Alignments and crossings in permutation tableaux of type $B$}\label{pt}

In this section, we understand the alignments and crossings of signed permutations in their corresponding permutation tableaux of type $B$.

\begin{lemma}\label{cross_at_zero}
  Two zigzag paths in a permutation tableau of type $B$ crossing at a $0$ never cross again at another $0$.
\end{lemma}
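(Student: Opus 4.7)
The plan is to proceed by contradiction and exploit the $\LE$-condition at the would-be second crossing cell. Suppose two distinct zigzag paths $P$ and $Q$ cross at two $0$'s, first at $(r_1, c_1)$ and later at $(r_2, c_2)$. Two structural observations follow immediately from the fact that zigzag paths only move east and south, and that two distinct paths cannot travel in the same direction through a common cell (otherwise they would coincide from that point onward): the cell $(r_2, c_2)$ lies strictly south and strictly east of $(r_1, c_1)$, and at each of the two crossings one of $P, Q$ is east-bound while the other is south-bound.

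The key idea is to locate two $1$'s forced by the path geometry near $(r_2, c_2)$. Whichever of $P, Q$ is east-bound through $(r_2, c_2)$ must have entered row $r_2$ by an east-turn at a $1$ strictly to the west of $(r_2, c_2)$: indeed, that path was in the distinct row $r_1$ at the first crossing, so between the two crossings it had to leave row $r_1$ by a south-turn and later enter row $r_2$ by an east-turn, and the cell of that turn cannot be $(r_2, c_2)$ itself because the latter is a $0$. Symmetrically, the path passing south-bound through $(r_2, c_2)$ must have left column $c_1$ and entered column $c_2$, producing a $1$ strictly above $(r_2, c_2)$ in column $c_2$ at its most recent south-turn. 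Once these two $1$'s are in place, the $\LE$-condition applied to $(r_2, c_2)$ forces it to contain a $1$, contradicting the hypothesis that it is a $0$.

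The main point requiring care, which I expect to be the only real obstacle, is justifying the existence of these prior turns and the strictness of their positions relative to $(r_2, c_2)$. Both properties hinge on the fact that the two crossings occur in distinct rows and in distinct columns (so that each path has genuinely had to change its row or column between the two crossings), together with the fact that each turning cell is itself a $1$ and therefore cannot coincide with the $0$ at the second crossing. No case split on the orientation of $P$ and $Q$ at the second crossing is really needed, since the argument treats "the east-bound path" and "the south-bound path" symmetrically regardless of which is which.
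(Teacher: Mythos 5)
Your proposal is correct and follows essentially the same route as the paper: the second crossing lies strictly southeast of the first, the two paths force a $1$ above and a $1$ to the left of that $0$, and the $\LE$-condition then yields a contradiction. The paper states this in two sentences; your write-up merely makes explicit why those two $1$'s (the turning cells) exist and are strictly above/left of the second crossing.
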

\begin{proof}
  Suppose that they cross at another $0$ to the strictly southeast of the first $0$, then the $0$ must have a $1$ above and a $1$ to the left. This is a contradiction to the $\LE$-condition.
\end{proof}

The following proposition refines and generalizes the result in \cite{ES-LW} (see Theorem 14)  to type $B$. We use $EE$, $NN$, or $EN$ to denote $0$'s of corresponding types in permutation tableaux for convenience.

\begin{proposition}\label{alignments}
  Let $\T\in\pt_{n}^{B}$. Then,
    $$\al_{nest}(\zeta(\T))=\zero_{EE}(\T)+\zero_{NN}(\T),$$
    $$\al_{EN}(\zeta(\T))=\zero_{EN}(\T),\mbox{ and } \al_{NE}(\zeta(\T))=\two(\T).$$
\end{proposition}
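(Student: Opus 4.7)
Plan: I will prove the three equalities by constructing explicit bijections.

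For part (iii), $\al_{NE}(\zeta(\T)) = \two(\T)$: the cleanest of the three. First I establish a shape characterization for the subdiagram: a box at (row $j$, col $i$), where $j$ is a positive row label (a weak excedance of $\sigma := \zeta(\T)$, by Lemma \ref{labeling}) and $i$ is a positive column label (a drop of $\sigma$), lies inside the shape iff $i > j$. This follows from the zigzag construction, since row $j$'s length equals the number of drops of $\sigma$ greater than $j$. Consequently, the 2's in the extended representation lie precisely at positions (row $j$, col $i$) with $j$ wex, $i$ drop, and $j > i$, so the map $(i, j) \leftrightarrow$ ``2 at (row $j$, col $i$)'' gives a bijection between $A_{NE}(\sigma) = \{(i, j) : \sigma(i) < i < j \le \sigma(j)\}$ and the set of 2's in $\T$.

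For parts (i) and (ii), the strategy is to associate to each typed 0 a pair of labels by tracing the two zigzag paths through it (one east-going, one south-going; these are distinct by Lemma \ref{cross_at_zero}) back to their originating sources. Each such source is either the left edge of a row or the top of a column of $\T$, and so yields a label in $[\pm n]$. For a type $EE$ 0 both origins are rows, for type $NN$ both are columns, and for type $EN$ one of each. In each case I then verify that the resulting pair of labels satisfies one of the inequality chains in Definition \ref{def_al}: a type $EE$ or $NN$ 0 produces an element of $A_{nest}$, and a type $EN$ 0 produces an element of $A_{EN}$. The three disjoint subcases of $A_{nest}$ correspond to the three possible sign patterns at the two origins (both positive, both negative, or mixed), and each sign combination forces the corresponding inequality chain via the zigzag structure. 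The inverse map—reading off the location of a 0 of the correct type from a given alignment—is obtained by reversing this analysis, which yields bijectivity.

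The main obstacle will be the meticulous case analysis for nestings in part (i). The three subcases in the definition of $A_{nest}$ interact nontrivially with the sign combinations of the origin labels and with whether the 0 lies in the stair or subdiagram. Special care is needed for 0's in rows labeled by negative integers: by Lemma \ref{labeling} the diagonal of row $(-i)$ carries a 1 or 0 according to the sign of $\sigma(i)$, which determines whether the path from row $(-i)$ or the path from col $i$ is the ``meaningful'' one, and this in turn affects the classification of nearby 0's. The non-typed 0's, which by Lemma \ref{non-typed zero} appear only in zero rows labeled by a negative integer, must be correctly excluded so that the bijection is neither over- nor under-counted.
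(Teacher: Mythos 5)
Your proposal is correct and follows essentially the same route as the paper: the identification of the $2$'s in the extended representation with $A_{NE}$ via the shape/labeling argument, and a bijection between typed $0$'s and the remaining alignments given by the pair of zigzag paths crossing at each such $0$. The only difference is one of direction --- you construct the map from typed $0$'s to alignment pairs, while the paper constructs its inverse (sending each alignment pair to the unique $0$ where the two relevant paths cross) --- and both treatments defer the same sign-pattern case analysis to routine verification.
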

\begin{proof}
Let $\T\in\pt_{n}^{B}$ and $\sigma=\zeta(\T)$.

Remember, from Definition~\ref{def_al}, that a pair $(i, j)\in [n]\times[n]$ is in $A_{nest}(\sigma)$ if and only if
$-i<-j<-\sigma(j)<-\sigma(i)$, $-i<j\le\sigma(j)<-\sigma(i)$, or $i<j\le\sigma(j)<\sigma(i)$.

Suppose that $-i<-j<-\sigma(j)<-\sigma(i)$ holds for a pair $(i, j)\in A_{nest}(\sigma)$. Then, $-i$ and $-j$ are row labels of $\T$ since $i, j\in\drop(\sigma)$, and there are three cases to be considered depending on the signs of $\sigma(j)$ and $\sigma(i)$;
 $-i<-j<-\sigma(j)<-\sigma(i)<0$, $-i<-j<-\sigma(j)<0<-\sigma(i)$, or $-i<-j<0<-\sigma(j)<-\sigma(i)$. Note that the diagonal entries of $\verb"row"\,(-i)$ and $\verb"row"\,(-j)$ are uniquely determined for each case, according to Lemma~\ref{labeling}. We consider two zigzag paths from $\verb"col"\,i$ and $\verb"col"\,j$ for the first case and two zigzag paths from $\verb"row"\,(-i)$ and $\verb"row"\,(-j)$ for the second and the third cases, and can check that two paths meet at a unique $0$ either of type $EE$ or of type $NN$ in each case; see Figure~\ref{pathee1}.

  \begin{figure}[!ht]
    \begin{tikzpicture}\tiny
      \draw[-] (1.0,8.0) to (1.0,4.1);
      \draw[-] (1.0,8.0) to (1.3,8.0);
      \draw[-] (1.3,8.0) to (1.3,7.7);
      \draw[-] (1.3,7.7) to (1.6,7.7);
      \draw[-] (1.6,7.7) to (1.6,7.4);
      \draw[-] (1.6,7.4) to (1.9,7.4);
      \draw[-] (1.9,7.4) to (1.9,7.1);
      \draw[-] (1.9,7.1) to (2.2,7.1);
      \draw[-] (2.2,7.1) to (2.2,6.8);
      \draw[-] (2.2,6.8) to (2.5,6.8);
      \draw[-] (2.5,6.8) to (2.5,6.5);
      \draw[-] (2.5,6.5) to (2.8,6.5);
      \draw[-] (2.8,6.5) to (2.8,6.2);
      \draw[-] (2.8,6.2) to (3.1,6.2);
      \draw[-] (3.1,6.2) to (3.1,5.0);
      \draw[-] (3.1,5.0) to (2.8,5.0);
      \draw[-] (2.8,5.0) to (2.8,4.4);
      \draw[-] (2.8,4.4) to (2.2,4.4);
      \draw[-] (2.2,4.4) to (2.2,4.1);
      \draw[-] (2.2,4.1) to (1.0,4.1);
      \tikzstyle{Element}=[draw,minimum width=3mm, minimum height=3mm, node distance=3mm, inner sep=0pt]
      \node [Element] [label=above:$i$] at (1.75,7.25) {0};
      \node [Element] [label=above:$j$] at (2.35,6.65) {0};
      \node [Element,fill=lightgray] at (2.35,6.0) {\texttt{NN}};
      \node at (0.7,7.25) {$-i$};
      \node at (0.7,6.65) {$-j$};
      \path[-] (1.75,7.4) edge (1.75,6.0);
      \path[-] (1.75,6.0) edge (2.8,6.0);
      \path[-] (2.8,6.0) edge (2.8,5.3);
      \path[->] (2.8,5.3) edge (3.1,5.3);
      \node at (3.45,5.3) {$\sigma(i)$};
      \path[dashed] (2.35,6.8) edge (2.35,5.1);
      \path[dashed] (2.35,5.1) edge (2.65,5.1);
      \path[dashed,->] (2.65,5.1) edge (2.65,4.4);
      \node at (2.65,4.2) {$\sigma(j)$};
      \scriptsize
      \node at (2.0,3.5) {$-i<-j<-\sigma(j)<-\sigma(i)<0$};
    \end{tikzpicture}\quad
    \begin{tikzpicture}\tiny
      \draw[-] (1.0,8.0) to (1.0,4.1);
      \draw[-] (1.0,8.0) to (1.3,8.0);
      \draw[-] (1.3,8.0) to (1.3,7.7);
      \draw[-] (1.3,7.7) to (1.6,7.7);
      \draw[-] (1.6,7.7) to (1.6,7.4);
      \draw[-] (1.6,7.4) to (1.9,7.4);
      \draw[-] (1.9,7.4) to (1.9,7.1);
      \draw[-] (1.9,7.1) to (2.2,7.1);
      \draw[-] (2.2,7.1) to (2.2,6.8);
      \draw[-] (2.2,6.8) to (2.5,6.8);
      \draw[-] (2.5,6.8) to (2.5,6.5);
      \draw[-] (2.5,6.5) to (2.8,6.5);
      \draw[-] (2.8,6.5) to (2.8,6.2);
      \draw[-] (2.8,6.2) to (3.1,6.2);
      \draw[-] (3.1,6.2) to (3.1,5.0);
      \draw[-] (3.1,5.0) to (2.8,5.0);
      \draw[-] (2.8,5.0) to (2.8,4.4);
      \draw[-] (2.8,4.4) to (2.2,4.4);
      \draw[-] (2.2,4.4) to (2.2,4.1);
      \draw[-] (2.2,4.1) to (1.0,4.1);
      \tikzstyle{Element}=[draw,minimum width=3mm, minimum height=3mm, node distance=3mm, inner sep=0pt]
      \node [Element] at (1.75,7.25) {1};
      \node [Element] at (2.35,6.65) {0};
      \node [Element,fill=lightgray] at (1.75,6.65) {\texttt{EE}};
      \node at (0.7,7.25) {$-i$};
      \node at (0.7,6.65) {$-j$};
      \path[-] (1.0,7.25) edge (1.75,7.25);
      \path[-] (1.75,7.25) edge (1.75,5.3);
      \path[->] (1.75,5.3) edge (3.1,5.3);
      \node at (3.55,5.3) {$\sigma(-i)$};
      \path[dashed,->] (1.0,6.65) edge (2.5,6.65);
      \scriptsize
      \node at (2.0,3.5) {$-i<-j<-\sigma(j)<0<-\sigma(i)$};
    \end{tikzpicture}\quad
    \begin{tikzpicture}\tiny
      \draw[-] (1.0,8.0) to (1.0,4.1);
      \draw[-] (1.0,8.0) to (1.3,8.0);
      \draw[-] (1.3,8.0) to (1.3,7.7);
      \draw[-] (1.3,7.7) to (1.6,7.7);
      \draw[-] (1.6,7.7) to (1.6,7.4);
      \draw[-] (1.6,7.4) to (1.9,7.4);
      \draw[-] (1.9,7.4) to (1.9,7.1);
      \draw[-] (1.9,7.1) to (2.2,7.1);
      \draw[-] (2.2,7.1) to (2.2,6.8);
      \draw[-] (2.2,6.8) to (2.5,6.8);
      \draw[-] (2.5,6.8) to (2.5,6.5);
      \draw[-] (2.5,6.5) to (2.8,6.5);
      \draw[-] (2.8,6.5) to (2.8,6.2);
      \draw[-] (2.8,6.2) to (3.1,6.2);
      \draw[-] (3.1,6.2) to (3.1,5.0);
      \draw[-] (3.1,5.0) to (2.8,5.0);
      \draw[-] (2.8,5.0) to (2.8,4.4);
      \draw[-] (2.8,4.4) to (2.2,4.4);
      \draw[-] (2.2,4.4) to (2.2,4.1);
      \draw[-] (2.2,4.1) to (1.0,4.1);
      \tikzstyle{Element}=[draw,minimum width=3mm, minimum height=3mm, node distance=3mm, inner sep=0pt]
      \node [Element] at (1.75,7.25) {1};
      \node [Element] at (2.35,6.65) {1};
      \node [Element,fill=lightgray] at (1.75,6.65) {\texttt{EE}};
      \node at (0.7,7.25) {$-i$};
      \node at (0.7,6.65) {$-j$};
      \path[-] (1.0,7.25) edge (1.75,7.25);
      \path[-] (1.75,7.25) edge (1.75,4.25);
      \path[->] (1.75,4.25) edge (2.2,4.25);
      \node at (2.65,4.25) {$\sigma(-i)$};
      \path[dashed] (1.0,6.65) edge (2.35,6.65);
      \path[dashed] (2.35,6.65) edge (2.35,5.3);
      \path[dashed,->] (2.35,5.3) edge (3.1,5.3);
      \node at (3.55,5.3) {$\sigma(-j)$};
      \scriptsize
      \node at (2.0,3.5) {$-i<-j<0<-\sigma(j)<-\sigma(i)$};
    \end{tikzpicture}\normalsize\vspace{-3mm}
    \caption{Two zigzag paths}
    \label{pathee1}
  \end{figure}

Suppose that $-i<j\le\sigma(j)<-\sigma(i)$ holds for a pair $(i, j)\in A_{nest}(\sigma)$.
Then, $-i$ and $j$ are labels of rows in $\T$ and we consider two zigzag paths from $\verb"row"\,(-i)$ and $\verb"row"\,j$; it is easy to check that they cross at a unique $0$ of type ${EE}$ by Definition \ref{def_zero} and  Lemma \ref{cross_at_zero}.
Suppose that $i<j\le\sigma(j)<\sigma(i)$ holds for a pair $(i, j)\in A_{nest}(\sigma)$. Then two zigzag paths from $\verb"row"\,i$ and $\verb"row"\,j$ cross at a unique $0$ of type $EE$.

In the above, we showed that each $(i, j)\in  A_{nest}(\sigma)$ corresponds to a unique $0$ either of type $EE$ or of type $NN$. It can be easily shown that two zigzag paths passing through a $0$ of type $EE$ or $NN$ are from $\verb"row"\,i$ and $\verb"row"\,j$ for $i,j\in[\pm n]$ or $\verb"col"\,k$ and $\verb"col"\,l$ for $k,l\in[n]$, respectively. This proves the first part of the proposition.

To prove $\al_{EN}(\zeta(\T))=\zero_{EN}(\T)$, recall from Definition~\ref{def_al} that a pair $(i, j)\in [n]\times[n]$ is in $A_{EN}(\sigma)$ if and only if $-i<0<-\sigma(i)<\sigma(j)<j$ or $i\le\sigma(i)<\sigma(j)<j$.

Suppose that $-i<0<-\sigma(i)<\sigma(j)<j$ holds for a pair $(i, j)\in A_{EN}(\sigma)$. Then two zigzag paths from $\verb"row"\,(-i)$ and $\verb"col"\,j$ cross at a unique $0$ of type $EN$; see first two figures in Figure~\ref{pathen1}. Suppose that $i\le\sigma(i)<\sigma(j)<j$ holds for a pair $(i,j)\in A_{EN}(\sigma)$. Then two zigzag paths from $\verb"row"\,i$ and $\verb"col"\,j$ cross at a unique zero of type $EN$. See the third figure in Figure~\ref{pathen1}. Therefore, each $(i, j)\in  A_{EN}(\sigma)$ corresponds to a unique $0$ of type $EN$ and it is easy to show that two zigzag paths passing through a $0$ of type $EN$ are from $\verb"row"\,i$ and $\verb"col"\,j$ for $i\in[\pm n]$, $j\in[n]$, which completes the proof of the second argument.

  \begin{figure}[!ht]
    \begin{tikzpicture}\tiny
      \draw[-] (1.0,8.0) to (1.0,4.1);
      \draw[-] (1.0,8.0) to (1.3,8.0);
      \draw[-] (1.3,8.0) to (1.3,7.7);
      \draw[-] (1.3,7.7) to (1.6,7.7);
      \draw[-] (1.6,7.7) to (1.6,7.4);
      \draw[-] (1.6,7.4) to (1.9,7.4);
      \draw[-] (1.9,7.4) to (1.9,7.1);
      \draw[-] (1.9,7.1) to (2.2,7.1);
      \draw[-] (2.2,7.1) to (2.2,6.8);
      \draw[-] (2.2,6.8) to (2.5,6.8);
      \draw[-] (2.5,6.8) to (2.5,6.5);
      \draw[-] (2.5,6.5) to (2.8,6.5);
      \draw[-] (2.8,6.5) to (2.8,6.2);
      \draw[-] (2.8,6.2) to (3.1,6.2);
      \draw[-] (3.1,6.2) to (3.1,5.0);
      \draw[-] (3.1,5.0) to (2.8,5.0);
      \draw[-] (2.8,5.0) to (2.8,4.4);
      \draw[-] (2.8,4.4) to (2.2,4.4);
      \draw[-] (2.2,4.4) to (2.2,4.1);
      \draw[-] (2.2,4.1) to (1.0,4.1);
      \node at (0.7,7.25) {$-i$};
      \tikzstyle{Element}=[draw,minimum width=3mm, minimum height=3mm, node distance=3mm, inner sep=0pt]
      \node [Element] at (1.75,7.25) {1};
      \node [Element] [label=above:$j$] at (2.35,6.65) {0};
      \node [Element,fill=lightgray] at (2.35,6.0) {\texttt{EN}};
      \path[-] (1.0,7.25) edge (1.75,7.25);
      \path[-] (1.75,7.25) edge (1.75,6.0);
      \path[-] (1.75,6.0) edge (2.65,6.0);
      \path[-] (2.65,6.0) edge (2.65,5.3);
      \path[->] (2.65,5.3) edge (3.1,5.3);
      \node at (3.55,5.3) {$\sigma(-i)$};
      \path[dashed] (2.35,6.8) edge (2.35,5.1);
      \path[dashed] (2.35,5.1) edge (2.65,5.1);
      \path[dashed] (2.65,5.1) edge (2.65,4.6);
      \path[dashed,->] (2.65,4.6) edge (2.8,4.6);
      \node at (3.15,4.6) {$\sigma(j)$};
      \normalsize\node at (2.0,3.7) {$i>j$};
    \tiny
      \draw[-] (5.0,8.0) to (5.0,4.1);
      \draw[-] (5.0,8.0) to (5.3,8.0);
      \draw[-] (5.3,8.0) to (5.3,7.7);
      \draw[-] (5.3,7.7) to (5.6,7.7);
      \draw[-] (5.6,7.7) to (5.6,7.4);
      \draw[-] (5.6,7.4) to (5.9,7.4);
      \draw[-] (5.9,7.4) to (5.9,7.1);
      \draw[-] (5.9,7.1) to (6.2,7.1);
      \draw[-] (6.2,7.1) to (6.2,6.8);
      \draw[-] (6.2,6.8) to (6.5,6.8);
      \draw[-] (6.5,6.8) to (6.5,6.5);
      \draw[-] (6.5,6.5) to (6.8,6.5);
      \draw[-] (6.8,6.5) to (6.8,6.2);
      \draw[-] (6.8,6.2) to (7.1,6.2);
      \draw[-] (7.1,6.2) to (7.1,5.0);
      \draw[-] (7.1,5.0) to (6.8,5.0);
      \draw[-] (6.8,5.0) to (6.8,4.4);
      \draw[-] (6.8,4.4) to (6.2,4.4);
      \draw[-] (6.2,4.4) to (6.2,4.1);
      \draw[-] (6.2,4.1) to (5.0,4.1);
      \node at (4.7,6.65) {$-i$};
      \tikzstyle{Element}=[draw,minimum width=3mm, minimum height=3mm, node distance=3mm, inner sep=0pt]
      \node [Element] at (6.35,6.65) {1};
      \node [Element] [label=above:$j$] at (5.75,7.25) {0};
      \node [Element,fill=lightgray] at (5.75,5.9) {\texttt{EN}};
      \path[-] (5.0,6.65) edge (5.45,6.65);
      \path[-] (5.45,6.65) edge (5.45,5.9);
      \path[-] (5.45,5.9) edge (6.35,5.9);
      \path[-] (6.35,5.9) edge (6.35,5.3);
      \path[->] (6.35,5.3) edge (7.1,5.3);
      \node at (7.55,5.3) {$\sigma(-i)$};
      \path[dashed] (5.75,7.4) edge (5.75,5.1);
      \path[dashed] (5.75,5.1) edge (6.35,5.1);
      \path[dashed,->] (6.35,5.1) edge (6.35,4.4);
      \node at (6.55,4.2) {$\sigma(j)$};
      \normalsize\node at (6.0,3.7) {$i<j$};
      \node at (4.0,3.2) {$-i<0<-\sigma(i)<\sigma(j)<j$};
    \end{tikzpicture}\normalsize\quad\quad\quad\quad
    \begin{tikzpicture}\tiny
      \draw[-] (1.0,8.0) to (1.0,4.1);
      \draw[-] (1.0,8.0) to (1.3,8.0);
      \draw[-] (1.3,8.0) to (1.3,7.7);
      \draw[-] (1.3,7.7) to (1.6,7.7);
      \draw[-] (1.6,7.7) to (1.6,7.4);
      \draw[-] (1.6,7.4) to (1.9,7.4);
      \draw[-] (1.9,7.4) to (1.9,7.1);
      \draw[-] (1.9,7.1) to (2.2,7.1);
      \draw[-] (2.2,7.1) to (2.2,6.8);
      \draw[-] (2.2,6.8) to (2.5,6.8);
      \draw[-] (2.5,6.8) to (2.5,6.5);
      \draw[-] (2.5,6.5) to (2.8,6.5);
      \draw[-] (2.8,6.5) to (2.8,6.2);
      \draw[-] (2.8,6.2) to (3.1,6.2);
      \draw[-] (3.1,6.2) to (3.1,5.0);
      \draw[-] (3.1,5.0) to (2.8,5.0);
      \draw[-] (2.8,5.0) to (2.8,4.4);
      \draw[-] (2.8,4.4) to (2.2,4.4);
      \draw[-] (2.2,4.4) to (2.2,4.1);
      \draw[-] (2.2,4.1) to (1.0,4.1);
      \node at (0.8,5.75) {$i$};
      \tikzstyle{Element}=[draw,minimum width=3mm, minimum height=3mm, node distance=3mm, inner sep=0pt]
      \node [Element] [label=above:$j$] at (1.75,7.25) {0};
      \node [Element,fill=lightgray] at (2.35,5.75) {\texttt{EN}};
      \path[-] (1.0,5.75) edge (2.65,5.75);
      \path[-] (2.65,5.75) edge (2.65,5.3);
      \path[->] (2.65,5.3) edge (3.1,5.3);
      \node at (3.45,5.3) {$\sigma(i)$};
      \path[dashed] (1.75,7.4) edge (1.75,6.35);
      \path[dashed] (1.75,6.35) edge (2.35,6.35);
      \path[dashed,->] (2.35,6.35) edge (2.35,4.4);
      \node at (2.55,4.2) {$\sigma(j)$};
      \normalsize
      \node at (2.0,3.2) {$i\le\sigma(i)<\sigma(j)<j$};
    \end{tikzpicture}\normalsize\vspace{-3mm}
    \caption{Two zigzag paths}
    \label{pathen1}
  \end{figure}

Recall from the definition that $(i,j)\in [n]\times [n] $ is in $A_{NE}(\sigma)$ if and only if $\sigma(i)<i<j\le\sigma(j)$. Hence, if $(i,j)\in A_{NE}(\sigma)$ then $i\in\drop(\sigma)$, $j\in\wex(\sigma)$ and $i<j$. In this case, we let $(i,j)\in A_{NE}(\sigma)$ correspond to the box in $\verb"row"\,j$ and $\verb"col"\,i$ of the extended representation of $\T$, which is filled with $2$. It is easy to check that this correspondence is bijective.
\end{proof}

We prove a theorem for the sum of alignments and crossings of a signed permutation, which extends the result on permutations by Corteel in \cite{SC}.

\begin{theorem}\label{sumofalcr}
  Let $\sigma\in\sym_{n}^{B}$. Then,
    $$\al(\sigma)+\crs(\sigma)=(n- \frac{1}{2}\fwex(\sigma))(\wex(\sigma)-1+\n(\sigma))+\frac{1}{2}\n(\sigma)\wex(\sigma).$$
\end{theorem}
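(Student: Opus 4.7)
The plan is to transfer everything to the permutation tableau $\T = \zeta^{-1}(\sigma) \in \pt_n^B$. By Propositions~\ref{alignments} and~\ref{propdiagram},
\[
\al(\sigma) + \crs(\sigma) = \zero(\T) + \two(\T) + \so(\T),
\]
so the theorem reduces to expressing this tableau quantity in terms of $n$, $\wex(\sigma)$, and $\n(\sigma)$. Throughout I write $w = \wex(\sigma)$, $g = \n(\sigma)$, and $d = n - w = \drop(\sigma)$.

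First I do a global cell count on the extended representation of $\T$. Each cell is exactly one of: a topmost $1$ in its column, a superfluous $1$, a $2$, a typed $0$, or a non-typed $0$. The total cell count is $N = w(n-w) + \binom{n-w+1}{2}$ (the $w \times (n-w)$ rectangle together with the stair of diagonals), and condition~(1) of the definition of permutation tableau guarantees exactly one topmost $1$ per column, contributing $n-w$ to the count. Therefore
\[
\zero(\T) + \two(\T) + \so(\T) = N - (n-w) - (\text{number of non-typed $0$'s in } \T).
\]

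The crux is to count non-typed $0$'s. By condition~(3) of the definition of permutation tableau, a row $\verb"row"\,(-i)$ has $0$ on its diagonal if and only if it is a zero row, and by Lemma~\ref{labeling} this occurs exactly when $i \in \drop(\sigma) \setminus \n(\sigma)$. Hence the number $z$ of zero rows with negative label equals $d - g$. Summing the formula of Lemma~\ref{non-typed zero} over these $z$ zero rows gives
\[
(\text{number of non-typed $0$'s in } \T) = 1 + 2 + \cdots + z = \binom{z+1}{2} = \binom{d-g+1}{2}.
\]

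Plugging this back in and using $\binom{d+1}{2} - \binom{d-g+1}{2} = dg - \binom{g}{2}$ yields
\[
\al(\sigma) + \crs(\sigma) = w\,d + \binom{d+1}{2} - d - \binom{d-g+1}{2} = d(w - 1 + g) - \binom{g}{2},
\]
and a final rearrangement using $\fwex(\sigma) = 2w + g$ rewrites this as $(n - \frac{1}{2}\fwex(\sigma))(\wex(\sigma) - 1 + \n(\sigma)) + \frac{1}{2}\n(\sigma)\wex(\sigma)$, matching the target formula. The essential new ingredient is the identification $z = d - g$, where the signed-permutation statistics $\wex$ and $\n$ enter through the interplay of condition~(3) and Lemma~\ref{labeling}; the remaining work is purely bookkeeping.
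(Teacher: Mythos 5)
Your proof is correct and takes essentially the same route as the paper's: both reduce to a cell count of the extended representation of $\zeta^{-1}(\sigma)$, remove one topmost $1$ per column together with the non-typed zeros counted via Lemma~\ref{non-typed zero} (there are $1+2+\cdots+(\col(\T)-\diag(\T))$ of them), and simplify using $\row(\T)=\wex(\sigma)$ and $\diag(\T)=\n(\sigma)$. The only difference is cosmetic bookkeeping: you partition the cells into five classes directly, while the paper first writes $\so(\T)=\one(\T)-\col(\T)$ and then counts the boxes column by column.
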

\begin{proof} Let $\T=\zeta^{-1}(\sigma)\in\mathcal{PT}_{n}^{B}$. Then, by Theorem~\ref{alignments} and Proposition~\ref{propdiagram}, we have
$\al(\sigma)=\al_{nest}(\sigma)+\al_{EN}(\sigma)+\al_{NE}(\sigma)=\zero(\T)+\two(\T)$  and $\crs(\sigma)=\so(\T)$. Therefore,
$$\al(\sigma)+\crs(\sigma)=\zero(\T)+\two(\T)+\so(\T)=\zero(\T)+\two(\T)+\one(\T)-\col(\mathcal{T})\,,$$
where $\col(\mathcal{T})=n-\row(\mathcal{T})$ is the number of columns, hence the number of topmost 1's, in $\T$. In the extended representation of $\T$, there are $n+(n-1)+\cdots+(n-\col(\mathcal{T})+1)=\frac{1}{2}\col(\mathcal{T})(2n-\col(\mathcal{T})+1)$ boxes and this is equal to  $\zero(\T)+\two(\T)+\one(\T)$ plus the number of non-typed zeros in $\T$. Moreover, there are $1+\cdots+(\col(\T)-\diag(\T))=\frac{1}{2} (\col(\T)-\diag(\T)) (1+\col(\T)-\diag(\T))$ non-typed $0$'s by Lemma~\ref{non-typed zero}, since the number of zero rows with negative labels is $(\col(\T)-\diag(\T))$.
Therefore, $\al(\sigma)+\crs(\sigma)$ is $$\frac{1}{2}\col(\mathcal{T})(2n-\col(\mathcal{T})+1)-\frac{1}{2}(\col(\mathcal{T})-\diag(\mathcal{T}))(\col(\mathcal{T})-\diag(\mathcal{T})+1)-\col(\T),$$
and using the relations $\row(\mathcal{T})=\wex(\sigma)$, $\diag(\T)=\n(\sigma)$, and $2\wex(\sigma)+\n(\sigma)=\fwex(\sigma)$, we finally have
  $$\al(\sigma)+\crs(\sigma)=(n- \frac{1}{2}\fwex(\sigma))(\wex(\sigma)-1+\n(\sigma))+\frac{1}{2}\n(\sigma)\wex(\sigma).$$
\end{proof}

\begin{remark}
  For a permutation $\sigma\in\sym_{n}$, since $\n(\sigma)=0$, the equation in Theorem~\ref{sumofalcr} becomes $\al(\sigma)+\crs(\sigma)=(n-\wex(\sigma))(\wex(\sigma)-1)$ which is Proposition 5 in \cite{ES-LW}. A similar equation in Theorem~\ref{sumofalcr} is proved by Corteel et al. (Proposition 4.3 in \cite{SC-MJV-JK}) using pignose diagram, and it is posed as an open problem to give a proof of the proposition using permutation tableaux of type $B$. Our proof of Theorem~\ref{sumofalcr} refines the equation and resolves the problem of Corteel et al. in \cite{SC-MJV-JK}; remember that the definition of `alignments' of ours is different from the one given by Corteel et al. in \cite{SC-MJV-JK}.
\end{remark}

\begin{example}\label{exsumofalcr}
  Let $\T=\zeta^{-1}(\sigma)$ for $\sigma=-2,-4,5,3,1\in\sym_{5}^{B}$. We know from Example~\ref{ex_alcr} that $\al_{nest}(\sigma)=2$, $\al_{EN}(\sigma)=1$, $\al_{NE}(\sigma)=2$, and $\crs(\sigma)=2$. The extended representation of $\T$ is in Figure~\ref{ptex} and we have $\zero_{EE}(\T)=1$, $\zero_{NN}(\T)=1$, $\zero_{EN}(\T)=1$, and $\two(\T)=2$. Hence, $\al_{nest}(\sigma)=\zero_{EE}(\T)+\zero_{NN}(\T)=2$, $\al_{EN}(\sigma)=\zero_{EN}(\T)=1$, and $\al_{NE}(\sigma)=\two(\T)=2$. Moreover, since $\wex(\sigma)=1$, $\n(\sigma)=2$, and $\fwex(\sigma)=4$, we can check that $\al(\sigma)+\crs(\sigma)=(5-\frac{1}{2}\times 4)(1-1+2)+\frac{1}{2}\times 2 \times 1 = 7.$
  \begin{figure}[!ht]
    \begin{tikzpicture}
    \tiny
    \tikzstyle{Element} = [draw, minimum width=4mm, minimum height=4mm, inner sep=0pt]
      \node [Element] [label=left:-5] at (1.0,6.0) {0};
      \node [Element] [label=left:-4] at (1.0,5.6) {0};
      \node [Element] [label=left:-2] at (1.0,5.2) {1};
      \node [Element] [label=left:-1] at (1.0,4.8) {\texttt{EE}};
      \node [Element] [label=left:3] at (1.0,4.4) {1};

      \node [Element] at (1.4,5.6) {0};
      \node [Element] at (1.4,5.2) {\texttt{NN}};
      \node [Element] at (1.4,4.8) {\texttt{EN}};
      \node [Element] at (1.4,4.4) {1};

      \node [Element] at (1.8,5.2) {1};
      \node [Element] at (1.8,4.8) {1};
      \node [Element] at (1.8,4.4) {2};

      \node [Element] at (2.2,4.8) {1};
      \node [Element] at (2.2,4.4) {2};
    \end{tikzpicture}\normalsize
    \vspace{-3mm}
  \caption{The extended representation of $\T=\zeta^{-1}(\sigma)$ where $\sigma=-2,-4,5,3,1$}\label{ptex}
  \end{figure}
\end{example} 
\section{Weak Bruhat order on Coxeter system of type $B$}\label{bruhat}

In this section, we consider the  weak Bruhat order on Coxeter system of type $B$ in terms of  permutation tableaux of type $B$.

The group $\sym_{n}^{B}$ of signed permutations with generators $s_{0},s_{1},\ldots,s_{n-1}$ forms the Coxeter system of type $B$, where $s_{0}=(-1)$, $s_{i}=(i,i+1)$ for $i\in[n-1]$ in cycle notation. The \emph{length} of $\sigma\in\sym_{n}^{B}$, denoted by $\ell(\sigma)$, is defined as the minimum of $k$ such that $\sigma=s_{a_{1}}\ldots s_{a_{k}}$, and an expression $\sigma=s_{a_{1}}\ldots s_{a_{k}}$ is \emph{reduced} if $k=\ell(\sigma)$. A pair $(i,j)\in[n]\times[n]$ is called an \emph{inversion} of $\sigma$, if $i<j$ and $\sigma(i)>\sigma(j)$, or $i\le j$ and $\sigma(-i)>\sigma(j)$. The number of inversions of $\sigma$ is denoted by $\inv(\sigma)$ and it is well known that $\ell(\sigma)=\inv(\sigma)$. (See Proposition 8.1.1 in \cite{AB-FB}.)

\begin{example}\label{exinv}
  Let $\sigma =-2,-4,5,3,1\in\sym_{5}^{B}$ in Example~\ref{ex_alcr}.
  Then, $\{(i,j)\in[n]\times[n]\,|\, i<j\mbox{ and } \sigma(i)>\sigma(j)\} = \{(1,2),(3,4),(3,5),(4,5)\}$ and $\{(i,j)\in[n]\times[n]\,|\, i\le j\mbox{ and } \sigma(-i)>\sigma(j)\} = \{(1,1),(1,2),(1,5),(2,2),(2,4),(2,5)\}$, hence, $\ell(\sigma)=\inv(\sigma)=10$.
\end{example}

The \emph{covering relation in weak Bruhat order} on $\sym_{n}^{B}$ is defined as follows; for $\sigma,\sigma'\in\sym_{n}^{B}$, $\sigma'$ \emph{covers} $\sigma$, denoted by $\sigma\lhd\sigma'$,  if $\sigma'=\sigma s_{i}$ for some $i\in\{0,1,\ldots,n-1\}$ and $\ell(\sigma')=\ell(\sigma)+1$.

The following lemma is useful to understand the weak Bruhat order in permutation tableaux of type $B$.

\begin{lemma}\label{rule} Let $i\in [n-1]$. For $*=1, 2$,
let $\T_{*}$ and $\T_{*}'$ be permutation tableaux of type $B$, where $\T_{*}'$ is obtained  by replacing a rectangular part of $\T_{*}$ with another as in Figure~\ref{rule1} and Figure~\ref{rule2}. Then, $\sigma_{*}'=\sigma_{*} s_{i}$, where $\sigma_{*}=\zeta(\T_{*})$ and $\sigma_{*}'=\zeta(\T_{*}')$.
%Similarly, Let $\T_{2}$ and $\T_{2}'$ be permutation tableaux of type $B$, where $\T_{2}'$ is obtained  by replacing a rectangular part of $\T_{2}$ with another as in Figure~\ref{rule2}. Then, $\sigma_{2}'=\sigma_{2} s_{i}$, where $\sigma_{2}=\zeta(\T_{2})$ and $\sigma_{2}'=\zeta(\T_{2}')$.
Moreover, the type of $0_{j'}$ in $\T_*'$ is the same as the type of $0_{j}$ in $\T_*$ for all $j$.
\end{lemma}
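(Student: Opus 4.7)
The plan is to verify directly that each local rewrite in Figures~\ref{rule1} and \ref{rule2} realizes right multiplication by $s_i$ on the corresponding signed permutation, by comparing zigzag paths in $\T_*$ and $\T_*'$. Recall that right multiplication acts explicitly as $(\sigma s_i)(k)=\sigma(k)$ for $k\notin\{i,i+1\}$ with $(\sigma s_i)(i)=\sigma(i+1)$ and $(\sigma s_i)(i+1)=\sigma(i)$ when $i\ge 1$, while $(\sigma s_0)(1)=-\sigma(1)$ with $(\sigma s_0)(k)=\sigma(k)$ for $k\ge 2$. So I need to check that for $\T_*'$ the zigzag paths from the rows/columns whose labels are \emph{not} involved in the modification end at the same labels as in $\T_*$, and that the paths from the labels that \emph{are} involved are swapped (or, in the $s_0$ case, that the label of one path is negated via a diagonal toggle).

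First, I would note that the rewrite replaces only a small rectangular region, so a zigzag path starting from $\verb"row"\,k$ or $\verb"col"\,k$ for $k$ outside the affected labels either avoids the rectangle altogether (in which case the path is literally unchanged) or enters and exits it. For each of the finitely many configurations displayed in the figures, I would check by direct inspection that the pattern of entry and exit points of such a traversing path is preserved, using the $\LE$-condition to rule out impossible neighboring patterns. This reduces the verification of the first claim to the paths whose starting labels are actually $i$, $i+1$ (or $\pm 1$ in the $s_0$ case), and there a one-line comparison of the two pictures shows that these paths are indeed exchanged (respectively, that the sign of the path starting from $\verb"row"\,(\pm 1)$ flips because the diagonal entry has been toggled, invoking clauses (2) and (3) of the definition of $\zeta$).

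For the second assertion on the types of zeros, recall from Definition~\ref{def_zero} that the type of a typed $0$ is determined entirely by whether each of the two zigzag paths passing through it starts at a row or at a column, and whether a row-starting path comes from a row with positive label or from a row with negative label (via its diagonal configuration). Since the previous paragraph shows that the origins of all zigzag paths are preserved under the rewrite, the pair of origins through any given box $0_{j'}\in\T_*'$ matches the corresponding pair through $0_j\in\T_*$, so the types agree; non-typed zeros remain non-typed by Lemma~\ref{non-typed zero} because the number of zero rows above a given negative-labeled row is unchanged.

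The main obstacle is the case analysis itself: there are several qualitatively different patches in Figures~\ref{rule1} and \ref{rule2} (depending on whether rows $i$ and $i+1$ are both in the diagram, only one is a row label, whether the affected columns are to the left or right of a $1$, and the $s_0$ toggle), and I must check in each case both that every through-path exits where expected, and that the $\LE$-condition remains satisfied in $\T_*'$ so that it is indeed a legitimate permutation tableau of type $B$. Once these case checks are in hand, the identification $\sigma_*'=\sigma_* s_i$ and the preservation of zero types follow uniformly.
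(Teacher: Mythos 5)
Your proposal is correct and follows essentially the same route as the paper's proof: a direct inspection of the zigzag paths in the before/after pictures of Figures~\ref{rule1} and \ref{rule2}, checking that the paths starting from the labels $i$ and $i+1$ exchange their exit points while every path that merely traverses the modified rectangle enters and exits at the same places, which simultaneously yields $\sigma_*'=\sigma_* s_i$ and the preservation of the types of the zeros $0_j$. Note only that the $s_0$/diagonal-toggle case you include is not part of this lemma (which is stated for $i\in[n-1]$; the $s_0$ case is handled separately as (WB1)), and that $\T_*'$ being a permutation tableau of type $B$ is a hypothesis of the lemma rather than something you need to verify.
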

\begin{proof}
One can check that  $\sigma_{*}(i)=\sigma_{*}'(i+1)$ and $\sigma_{*}(i+1)=\sigma_{*}'(i)$ while $\sigma_{*}(j)=\sigma_{*}'(j)$ for all $j\ne i, i+1$ in Figure~\ref{rule1} and Figure~\ref{rule2}. That is, $\sigma_{*}'=\sigma_{*} s_{i}$ for $*=1, 2$. Moreover, it is easy to check that the zigzag path passing through $0_{j'}$ vertically (or horizontally) in $\T_*'$ is from the same row or column with the one passing through $0_j$ vertically (or horizontally, respectively) in $\T_*$.
\end{proof}
 \begin{figure}[!ht]
   \begin{tikzpicture}
     \tikzstyle{Element}=[draw, minimum width=5mm, minimum height=4mm, inner sep=0pt]
       \tiny
       \node[Element] at (0.0,4.0) {$0_{1}$};
       \node[Element] at (0.5,4.0) {$\cdots$};
       \node[Element] at (1.0,4.0) {}; \node at (1.02,4.0) {$\cdots$};
       \node[Element] at (1.5,4.0) {$0_{k}$};
       \node[Element,fill=lightgray] at (2.0,4.0) {1};
       \node[Element,fill=lightgray] at (0.0,3.6) {1};
       \node[Element] at (0.5,3.6) {1};
       \node[Element] at (1.0,3.6) {}; \node at (1.02,3.6) {$\cdots$};
       \node[Element] at (1.5,3.6) {1};
       \node[Element] at (2.0,3.6) {1};
       \node (i1) at (-0.6,4) {$i$}; \node (i2) at (-0.9,3.6) {$i+1$};
       \draw[-,gray] (i1) to (2.0,4.0); \draw[-,gray] (2.0,4) to (2.0,3.6); \draw[->,gray] (2.0,3.6) to (2.5,3.6);
       \draw[-,gray] (i2) to (0,3.6); \draw[->,gray] (0,3.6) to (0,3);
       \node[Element,fill=lightgray] at (-0.1,4.7) {1};
       \node at (1,4.7) {: leftmost 1};

       \node[Element] at (4,4.0) {1};
       \node[Element] at (4.5,4.0) {1};
       \node[Element] at (5,4.0) {}; \node at (5.02,4.0) {$\cdots$};
       \node[Element] at (5.5,4.0) {1};
       \node[Element] at (6,4.0) {1};
       \node[Element] at (4,3.6) {\texttt{EE}};
       \node[Element] at (4.5,3.6) {$0_{1'}$};
       \node[Element] at (5,3.6) {}; \node at (5.02,3.6) {$\cdots$};
       \node[Element] at (5.5,3.6) {$\cdots$};
       \node[Element] at (6,3.6) {$0_{k'}$};
       \node (i3) at (3.5,4) {$i$}; \node (i4) at (3.2,3.6) {$i+1$};
       \draw[-,gray] (i3) to (4,4); \draw[->,gray] (4,4) to (4,3);
       \draw[->,dashed,gray] (i4) to (6.5,3.6);
       \small \node at (1.0,2.5) {a part of $\T_{1}$}; \node at (5,2.5) {a part of $\T_{1}'$};
   \end{tikzpicture}\normalsize\quad\quad\quad\quad\quad
   \begin{tikzpicture}
     \tikzstyle{Element}=[draw, minimum width=5mm, minimum height=4mm, inner sep=0pt]
       \tiny
       \node[Element] at (0.0,4.0) {$0_{1}$};
       \node[Element] at (0.5,4.0) {$\cdots$};
       \node[Element] at (1.0,4.0) {}; \node at (1.02,4.0) {$\cdots$};
       \node[Element] at (1.5,4.0) {$0_{k}$};
       \node[Element] at (2.0,4.0) {1};
       \node[Element] at (0.0,3.6) {1};
       \node[Element] at (0.5,3.6) {1};
       \node[Element] at (1.0,3.6) {}; \node at (1.02,3.6) {$\cdots$};
       \node[Element] at (1.5,3.6) {1};
       \node[Element] at (2.0,3.6) {1};
       \node (j1) at (0,4.6) {$j_1$}; \node (j2) at (1.5,4.6) {$j_2$};
       \draw[-,gray] (j1) to (0,3.6); \draw[-,gray] (0,3.6) to (0.5,3.6); \draw[->,gray] (0.5,3.6) to (0.5,3);
       \draw[-,gray] (j2) to (1.5,3.6); \draw[-,gray] (1.5,3.6) to (2,3.6); \draw[->,gray] (2,3.6) to (2,3);

       \node[Element] at (3,4.0) {1};
       \node[Element] at (3.5,4.0) {1};
       \node[Element] at (4,4.0) {}; \node at (4.02,4.0) {$\cdots$};
       \node[Element] at (4.5,4.0) {1};
       \node[Element] at (5,4.0) {1};
       \node[Element] at (3,3.6) {\texttt{EE}};
       \node[Element] at (3.5,3.6) {$0_{1'}$};
       \node[Element] at (4,3.6) {}; \node at (4.02,3.6) {$\cdots$};
       \node[Element] at (4.5,3.6) {$\cdots$};
       \node[Element] at (5,3.6) {$0_{k'}$};
       \node (j3) at (3,4.6) {$j_1$}; \node (j4) at (4.5,4.6) {$j_2$};
       \draw[-,gray] (j3) to (3,4); \draw[-,gray] (3,4) to (3.5,4); \draw[->,gray] (3.5,4) to (3.5,3);
       \draw[-,gray] (j4) to (4.5,4); \draw[-,gray] (4.5,4) to (5,4); \draw[->,gray] (5,4) to (5,3);
       \small \node at (1,2.5) {a part of $\T_{1}$}; \node at (4,2.5) {a part of $\T_{1}'$};
   \end{tikzpicture}\normalsize\vspace{-3mm}
   \caption{Zigzag paths in $\T_1$ and $\T'_1$}\label{rule1}
 \end{figure}

 \begin{figure}[!ht]
   \begin{tikzpicture}
     \tikzstyle{Element}=[draw, minimum width=5mm, minimum height=4mm, inner sep=0pt]
       \tiny
       \node[Element] at (0,4) {$0_{1}$};
       \node[Element] at (0,3.6) {}; \node at (0,3.7) {$\cdot$};\node at (0,3.6) {$\cdot$};\node at (0,3.5) {$\cdot$};
       \node[Element] at (0,3.2) {}; \node at (0,3.3) {$\cdot$};\node at (0,3.2) {$\cdot$};\node at (0,3.1) {$\cdot$};
       \node[Element] at (0,2.8) {$0_{k}$};
       \node[Element,fill=lightgray] at (0,2.4) {1};
       \node[Element,fill=lightgray] at (0.5,4) {1};
       \node[Element] at (0.5,3.6) {1};
       \node[Element] at (0.5,3.2) {}; \node at (0.5,3.3) {$\cdot$}; \node at (0.5,3.2) {$\cdot$};\node at (0.5,3.1) {$\cdot$};
       \node[Element] at (0.5,2.8) {1};
       \node[Element] at (0.5,2.4) {1};
       \node (i1) at (0,4.5) {$i\hspace{-1mm} +\hspace{-1mm}1$}; \node (i2) at (0.5,4.5) {$i$};
       \draw[-,gray] (i1) to (0,2.4); \draw[-,gray] (0,2.4) to (0.5,2.4); \draw[->,gray] (0.5,2.4) to (0.5,1.8);
       \draw[-,gray] (i2) to (0.5,4); \draw[->,gray] (0.5,4) to (1,4);
       \node[Element,fill=lightgray] at (1.3,3) {1};
       \node at (2.4,3) {: topmost 1};
       \small \node at (0.3,1.4) {a part of $\T_{2}$};
   \end{tikzpicture}\normalsize
   \begin{tikzpicture}
     \tikzstyle{Element}=[draw, minimum width=5mm, minimum height=4mm, inner sep=0pt]
       \tiny
       \node[Element] at (0,4) {1};
       \node[Element] at (0,3.6) {1};
       \node[Element] at (0,3.2) {}; \node at (0,3.3) {$\cdot$};\node at (0,3.2) {$\cdot$};\node at (0,3.1) {$\cdot$};
       \node[Element] at (0,2.8) {1};
       \node[Element] at (0,2.4) {1};
       \node[Element] at (0.5,4) {\texttt{NN}};
       \node[Element] at (0.5,3.6) {$0_{1'}$};
       \node[Element] at (0.5,3.2) {}; \node at (0.5,3.3) {$\cdot$}; \node at (0.5,3.2) {$\cdot$};\node at (0.5,3.1) {$\cdot$};
       \node[Element] at (0.5,2.8) {}; \node at (0.5,2.9) {$\cdot$}; \node at (0.5,2.8) {$\cdot$};\node at (0.5,2.7) {$\cdot$};
       \node[Element] at (0.5,2.4) {$0_{k'}$};
       \node (i1) at (0,4.5) {$i\hspace{-1mm} +\hspace{-1mm}1$}; \node (i2) at (0.5,4.5) {$i$};
       \draw[-,gray] (i1) to (0,4); \draw[->,gray] (0,4) to (1,4);
       \draw[->,dashed,gray] (i2) to (0.5,1.8);
       \small \node at (0.3,1.4) {a part of $\T_{2}'$};
   \end{tikzpicture}\normalsize\quad\quad\quad\quad\quad\quad
   \begin{tikzpicture}
     \tikzstyle{Element}=[draw, minimum width=5mm, minimum height=4mm, inner sep=0pt]
       \tiny
       \node[Element] at (0,4) {$0_{1}$};
       \node[Element] at (0,3.6) {}; \node at (0,3.7) {$\cdot$};\node at (0,3.6) {$\cdot$};\node at (0,3.5) {$\cdot$};
       \node[Element] at (0,3.2) {}; \node at (0,3.3) {$\cdot$};\node at (0,3.2) {$\cdot$};\node at (0,3.1) {$\cdot$};
       \node[Element] at (0,2.8) {$0_{k}$};
       \node[Element] at (0,2.4) {1};
       \node[Element] at (0.5,4) {1};
       \node[Element] at (0.5,3.6) {1};
       \node[Element] at (0.5,3.2) {}; \node at (0.5,3.3) {$\cdot$}; \node at (0.5,3.2) {$\cdot$};\node at (0.5,3.1) {$\cdot$};
       \node[Element] at (0.5,2.8) {1};
       \node[Element] at (0.5,2.4) {1};
       \node (j1) at (-0.8,4) {$j_{1}$}; \node (j2) at (-0.8,2.8) {$j_{2}$};
       \draw[-,gray] (j1) to (0.5,4); \draw[-,gray] (0.5,4) to (0.5,3.6); \draw[->,gray] (0.5,3.6) to (1,3.6);
       \draw[-,gray] (j2) to (0.5,2.8); \draw[-,gray] (0.5,2.8) to (0.5,2.4); \draw[->,gray] (0.5,2.4) to (1,2.4);
       \small \node at (0.3,1.4) {a part of $\T_{2}$};
   \end{tikzpicture}\normalsize\quad\quad
   \begin{tikzpicture}
     \tikzstyle{Element}=[draw, minimum width=5mm, minimum height=4mm, inner sep=0pt]
       \tiny
       \node[Element] at (0,4) {1};
       \node[Element] at (0,3.6) {1};
       \node[Element] at (0,3.2) {}; \node at (0,3.3) {$\cdot$};\node at (0,3.2) {$\cdot$};\node at (0,3.1) {$\cdot$};
       \node[Element] at (0,2.8) {1};
       \node[Element] at (0,2.4) {1};
       \node[Element] at (0.5,4) {\texttt{NN}};
       \node[Element] at (0.5,3.6) {$0_{1'}$};
       \node[Element] at (0.5,3.2) {}; \node at (0.5,3.3) {$\cdot$}; \node at (0.5,3.2) {$\cdot$};\node at (0.5,3.1) {$\cdot$};
       \node[Element] at (0.5,2.8) {}; \node at (0.5,2.9) {$\cdot$}; \node at (0.5,2.8) {$\cdot$};\node at (0.5,2.7) {$\cdot$};
       \node[Element] at (0.5,2.4) {$0_{k'}$};
       \node (j1) at (-0.8,4) {$j_{1}$}; \node (j2) at (-0.8,2.8) {$j_{2}$};
       \draw[-,gray] (j1) to (0,4); \draw[-,gray] (0,4) to (0,3.6); \draw[->,gray] (0,3.6) to (1,3.6);
       \draw[-,gray] (j2) to (0,2.8); \draw[-,gray] (0,2.8) to (0,2.4); \draw[->,gray] (0,2.4) to (1,2.4);
       \small \node at (0.3,1.4) {a part of $\T_{2}'$};
   \end{tikzpicture}\normalsize\vspace{-3mm}
   \caption{Zigzag paths in $\T_2$ and $\T'_2$}\label{rule2}
 \end{figure}

\begin{remark} We can show that Lemma~\ref{rule} is also true if we consider two consecutive `negative' rows $\verb"row"\,(-i)$, $\verb"row"\,(-(i+1))$, $i\in [n]$ in Figure~\ref{rule1} instead of positive rows.
\end{remark}

Our main goal is to illustrate the covering relations in weak Bruhat order on $\sym_{n}^{B}$ in terms of permutation tableaux of type $B$: We describe how $\T=\zeta^{-1}(\sigma)$ and $\T'=\zeta^{-1}(\sigma')$ are related when $\sigma\lhd\sigma'$.
Suppose that $\sigma\lhd\sigma'$ in $\sym_{n}^{B}$ and let $\T=\zeta^{-1}(\sigma)$ and $\T'=\zeta^{-1}(\sigma')$. Then $\sigma'=\sigma\, s_{i}$ for some $i\in\{0,1,\ldots,n-1\}$ and $\ell(\sigma')=\ell(\sigma)+1$.

(WB1): If $s_i=s_0$, then $\sigma(1)$ must be positive, hence $\sigma'(1)$ is negative, and $\T'$ is easily obtained by adding one diagonal with 1; see Figure \ref{weakbruhat_p1ton1} and check that $\sigma'(1)=-\sigma(1)$.
 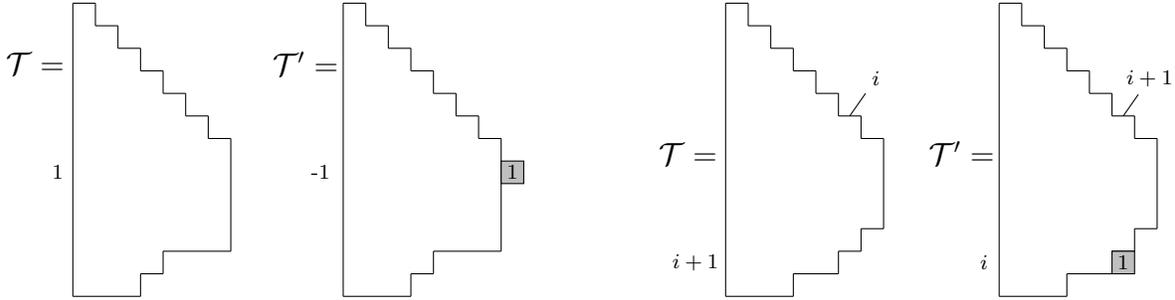
\begin{figure}[!ht]
    \begin{tikzpicture}
            \node at (0.5,7.2) {$\T=$};\tiny
            \draw[-] (1.0,8.0) to (1.0,4.1);
            \draw[-] (1.0,8.0) to (1.3,8.0);
            \draw[-] (1.3,8.0) to (1.3,7.7);
            \draw[-] (1.3,7.7) to (1.6,7.7);
            \draw[-] (1.6,7.7) to (1.6,7.4);
            \draw[-] (1.6,7.4) to (1.9,7.4);
            \draw[-] (1.9,7.4) to (1.9,7.1);
            \draw[-] (1.9,7.1) to (2.2,7.1);
            \draw[-] (2.2,7.1) to (2.2,6.8);
            \draw[-] (2.2,6.8) to (2.5,6.8);
            \draw[-] (2.5,6.8) to (2.5,6.5);
            \draw[-] (2.5,6.5) to (2.8,6.5);
            \draw[-] (2.8,6.5) to (2.8,6.2);
            \draw[-] (2.8,6.2) to (3.1,6.2);
            \draw[-] (3.1,6.2) to (3.1,4.7);
            \draw[-] (3.1,4.7) to (2.2,4.7);
            \draw[-] (2.2,4.7) to (2.2,4.4);
            \draw[-] (2.2,4.4) to (1.9,4.4);
            \draw[-] (1.9,4.4) to (1.9,4.1);
            \draw[-] (1.9,4.1) to (1.0,4.1);
            \node at (0.8,5.75) {1};
          \end{tikzpicture}\normalsize\quad
    \begin{tikzpicture}
      \node at (0.5,7.2) {$\T'=$};\tiny
      \draw[-] (1.0,8.0) to (1.0,4.1);
      \draw[-] (1.0,8.0) to (1.3,8.0);
      \draw[-] (1.3,8.0) to (1.3,7.7);
      \draw[-] (1.3,7.7) to (1.6,7.7);
      \draw[-] (1.6,7.7) to (1.6,7.4);
      \draw[-] (1.6,7.4) to (1.9,7.4);
      \draw[-] (1.9,7.4) to (1.9,7.1);
      \draw[-] (1.9,7.1) to (2.2,7.1);
      \draw[-] (2.2,7.1) to (2.2,6.8);
      \draw[-] (2.2,6.8) to (2.5,6.8);
      \draw[-] (2.5,6.8) to (2.5,6.5);
      \draw[-] (2.5,6.5) to (2.8,6.5);
      \draw[-] (2.8,6.5) to (2.8,6.2);
      \draw[-] (2.8,6.2) to (3.1,6.2);
      \draw[-] (3.1,6.2) to (3.1,4.7);
      \draw[-] (3.1,4.7) to (2.2,4.7);
      \draw[-] (2.2,4.7) to (2.2,4.4);
      \draw[-] (2.2,4.4) to (1.9,4.4);
      \draw[-] (1.9,4.4) to (1.9,4.1);
      \draw[-] (1.9,4.1) to (1.0,4.1);
      \node at (0.7,5.75) {-1};
      \tikzstyle{Element}=[draw,minimum width=3mm, minimum height=3mm, inner sep=0pt]
      \node[Element,fill=lightgray] at (3.25,5.75) {1};
    \end{tikzpicture}\normalsize\quad\quad\quad\quad
    \begin{tikzpicture}
      \node at (0.5,6.0) {$\T=$};\tiny
      \draw[-] (1.0,8.0) to (1.0,4.1);
      \draw[-] (1.0,8.0) to (1.3,8.0);
      \draw[-] (1.3,8.0) to (1.3,7.7);
      \draw[-] (1.3,7.7) to (1.6,7.7);
      \draw[-] (1.6,7.7) to (1.6,7.4);
      \draw[-] (1.6,7.4) to (1.9,7.4);
      \draw[-] (1.9,7.4) to (1.9,7.1);
      \draw[-] (1.9,7.1) to (2.2,7.1);
      \draw[-] (2.2,7.1) to (2.2,6.8);
      \draw[-] (2.2,6.8) to (2.5,6.8);
      \draw[-] (2.5,6.8) to (2.5,6.5);
      \draw[-] (2.5,6.5) to (2.8,6.5);
      \draw[-] (2.8,6.5) to (2.8,6.2);
      \draw[-] (2.8,6.2) to (3.1,6.2);
      \draw[-] (3.1,6.2) to (3.1,5.0);
      \draw[-] (3.1,5.0) to (2.8,5.0);
      \draw[-] (2.8,5.0) to (2.8,4.7);
      \draw[-] (2.8,4.7) to (2.5,4.7);
      \draw[-] (2.5,4.7) to (2.5,4.4);
      \draw[-] (2.5,4.4) to (1.9,4.4);
      \draw[-] (1.9,4.4) to (1.9,4.1);
      \draw[-] (1.9,4.1) to (1.0,4.1);
      \node at (0.6,4.55) {$i+1$};
      \node (i1) at (3,7) {$i$};
      \draw[-] (2.65,6.5) to (i1);
    \end{tikzpicture}\normalsize\quad
    \begin{tikzpicture}
      \node at (0.5,6.0) {$\T'=$};\tiny
      \draw[-] (1.0,8.0) to (1.0,4.1);
      \draw[-] (1.0,8.0) to (1.3,8.0);
      \draw[-] (1.3,8.0) to (1.3,7.7);
      \draw[-] (1.3,7.7) to (1.6,7.7);
      \draw[-] (1.6,7.7) to (1.6,7.4);
      \draw[-] (1.6,7.4) to (1.9,7.4);
      \draw[-] (1.9,7.4) to (1.9,7.1);
      \draw[-] (1.9,7.1) to (2.2,7.1);
      \draw[-] (2.2,7.1) to (2.2,6.8);
      \draw[-] (2.2,6.8) to (2.5,6.8);
      \draw[-] (2.5,6.8) to (2.5,6.5);
      \draw[-] (2.5,6.5) to (2.8,6.5);
      \draw[-] (2.8,6.5) to (2.8,6.2);
      \draw[-] (2.8,6.2) to (3.1,6.2);
      \draw[-] (3.1,6.2) to (3.1,5.0);
      \draw[-] (3.1,5.0) to (2.8,5.0);
      \draw[-] (2.8,5.0) to (2.8,4.7);
      \draw[-] (2.8,4.7) to (2.5,4.7);
      \draw[-] (2.5,4.7) to (2.5,4.4);
      \draw[-] (2.5,4.4) to (1.9,4.4);
      \draw[-] (1.9,4.4) to (1.9,4.1);
      \draw[-] (1.9,4.1) to (1.0,4.1);
      \tikzstyle{Element}=[draw,minimum width=3mm, minimum height=3mm, node distance=3mm, inner sep=0pt]
      \node[Element,fill=lightgray] at (2.65,4.55) {1};
      \node at (0.8,4.55) {$i$};
      \node (i1) at (3,7) {$i+1$};
      \draw[-] (2.65,6.5) to (i1);
    \end{tikzpicture}\normalsize
    \vspace{-3mm}
    \caption{(WB1) and (WB2)}\label{weakbruhat_p1ton1}
  \end{figure}

If $s_i\not=s_0$, then $\sigma(i)<\sigma(i+1)$ and it is impossible to have $\sigma(i)\ge i$ and $\sigma(i+1)<i+1$ at the same time. Hence, we have following five cases to consider;

 \begin{itemize}
   \item[(WB2)] $\sigma(i)< i$ and $\sigma(i+1) \ge i+1$,
   \item[(WB3)] $\sigma(i)=i$ and $\sigma(i+1) \ge i+1$,
   \item[(WB4)] $\sigma(i)> i$ and $\sigma(i+1) > i+1$,
   \item[(WB5)] $0<\sigma(i)< i$ and $\sigma(i+1) < i+1$,
   \item[(WB6)] $\sigma(i)<0<i$ and $\sigma(i+1)<i+1$.
 \end{itemize}

In the following, we consider each case of (WB2)-(WB6), where we use $(j,k)$ to indicate the box in $\verb"row"\,j$ and $\verb"col"\,k$ in a permutation tableau.

(WB2): Since $\sigma(i)< i$ and $\sigma(i+1) \ge i+1$,  we have $i\in\drop(\sigma)$ and $i+1\in\wex(\sigma)$, and the box $(i+1, i)$ is not in the diagram of $\T$. We can obtain $\mathcal{T}'$ from $\mathcal{T}$ by adding the box $(i+1, i)$ filled with $1$ to $\T$ and exchanging the labels $i$ and $i+1$; see Figure \ref{weakbruhat_p1ton1}.

(WB3): Since $\sigma(i)=i$ and $\sigma(i+1)\ge i+1$, we have $i, i+1\in\wex(\sigma)$ and $\verb"row"\,i$ of $\T$ is a zero row. To obtain $\T'$, delete $\verb"row"\,i$ and insert a new column with only one $1$ at the downmost box that is right next to the rightmost box of $\verb"row"\,(i+1)$ of $\T$ and its corresponding (negative) zero row. Relabel the rows and columns to have a new permutation tableau $\T'$. See Figure \ref{weakbruhat_newcol} and check that $\zeta(\T')=\sigma s_i$.

        \begin{figure}[!ht]
          \begin{tikzpicture}
            \node at (0.5,7.2) {$\T=$};\tiny
            \draw[-] (1.0,8.0) to (1.0,4.1);
            \draw[-] (1.0,8.0) to (1.3,8.0);
            \draw[-] (1.3,8.0) to (1.3,7.7);
            \draw[-] (1.3,7.7) to (1.6,7.7);
            \draw[-] (1.6,7.7) to (1.6,7.4);
            \draw[-] (1.6,7.4) to (1.9,7.4);
            \draw[-] (1.9,7.4) to (1.9,7.1);
            \draw[-] (1.9,7.1) to (2.2,7.1);
            \draw[-] (2.2,7.1) to (2.2,6.8);
            \draw[-] (2.2,6.8) to (2.5,6.8);
            \draw[-] (2.5,6.8) to (2.5,6.5);
            \draw[-] (2.5,6.5) to (2.8,6.5);
            \draw[-] (2.8,6.5) to (2.8,5.3);
            \draw[-] (2.8,5.3) to (2.5,5.3);
            \draw[-] (2.5,5.3) to (2.5,4.4);
            \draw[-] (2.5,4.4) to (1.9,4.4);
            \draw[-] (1.9,4.4) to (1.9,4.1);
            \draw[-] (1.9,4.1) to (1.0,4.1);
            \tikzstyle{Element}=[draw,minimum width=15mm, minimum height=3mm, inner sep=0pt]
            \node[Element,fill=lightgray] [label=left:$i$] at (1.75,5.15) {$0\,\,\,\cdots\,\,\,0$};
            \node[Element,fill=lightgray] [label=left:$i+1$] at (1.75,4.85) {$\ast\,\,\,\cdots\,\,\,\ast$};
          \end{tikzpicture}\normalsize\quad\quad\quad
          \begin{tikzpicture}
            \node at (0.5,7.2) {$\T'=$};\tiny
            \draw[-] (1.0,8.0) to (1.0,4.1);
            \draw[-] (1.0,8.0) to (1.3,8.0);
            \draw[-] (1.3,8.0) to (1.3,7.7);
            \draw[-] (1.3,7.7) to (1.6,7.7);
            \draw[-] (1.6,7.7) to (1.6,7.4);
            \draw[-] (1.6,7.4) to (1.9,7.4);
            \draw[-] (1.9,7.4) to (1.9,7.1);
            \draw[-] (1.9,7.1) to (2.2,7.1);
            \draw[-] (2.2,7.1) to (2.2,6.8);
            \draw[-] (2.2,6.8) to (2.5,6.8);
            \draw[-] (2.5,6.8) to (2.5,6.5);
            \draw[-] (2.5,6.5) to (2.8,6.5);
            \draw[-] (2.8,6.5) to (2.8,6.2);
            \draw[-] (2.8,6.2) to (3.1,6.2);
            \draw[-] (3.1,6.2) to (3.1,5.0);
            \draw[-] (3.1,5.0) to (2.8,5.0);
            \draw[-] (2.8,5.0) to (2.8,4.7);
            \draw[-] (2.8,4.7) to (2.5,4.7);
            \draw[-] (2.5,4.7) to (2.5,4.4);
            \draw[-] (2.5,4.4) to (1.9,4.4);
            \draw[-] (1.9,4.4) to (1.9,4.1);
            \draw[-] (1.9,4.1) to (1.0,4.1);
            \tikzstyle{Element}=[draw,minimum width=15mm, minimum height=3mm, inner sep=0pt]
            \node[Element,fill=lightgray] (0) [label=left:$i$] at (1.75,4.85) {$\ast\,\,\,\cdots\,\,\,\ast$};
            \node (1) at (3.75,4.25) {$\verb"row"\,(i+1)$ of $\T$};
            \draw[-] (0) to (1);
            \node[Element,fill=lightgray] [label=left:$-(i+1)$] at (1.75,6.35) {$0\,\,\,\cdots\,\,\,0$};
            \tikzstyle{Element}=[draw,minimum width=3mm, minimum height=12mm, inner sep=0pt]
            \node[Element,fill=lightgray] at (2.65,5.6) {};
            \node at (2.65,6.05) {0}; \node at (2.65,5.7) {$\vdots$}; \node at (2.65, 5.15) {0};
            \tikzstyle{Element}=[draw,minimum width=3mm, minimum height=3mm, inner sep=0pt]
            \node[Element,fill=lightgray] at (2.65,4.85) {1};
            \node[Element,fill=lightgray] at (2.65,6.35) {0};
          \end{tikzpicture}\normalsize\vspace{-3mm}
          \caption{(WB3)}\label{weakbruhat_newcol}
        \end{figure}

(WB4): Since $\sigma(i)> i$ and $\sigma(i+1) > i+1$, $\verb"row"\,i$ and $\verb"row"\,(i+1)$ are two consecutive rows in $\T$.
  \begin{itemize}
    \item[(WB4-1)] If two zigzag paths from $\verb"row"\,i$ and $\verb"row"\,(i+1)$ meet at a box with $1$, (note that it is impossible to cross at a box with $0$ because $\sigma(i)<\sigma(i+1)$) then replace the first such $1$ by $0$, which must be of type $EE$, so that we obtain $\mathcal{T}'$. See Figure \ref{weakbruhat_1to0ee}.
    \item[(WB4-2)] If two zigzag paths from $\verb"row"\,i$ and $\verb"row"\,(i+1)$ do not meet, then we can apply Lemma~\ref{rule} to the smallest rectangular part containing the leftmost $1$'s of \verb"row"\,$i$ and \verb"row"\,$(i+1)$ to obtain $\T'$: See Figure \ref{weakbruhat_1to0ee}.
  \end{itemize}

        \begin{figure}[!ht]
          \begin{tikzpicture}
            \node at (0.5,6.6) {$\T=$};\tiny
            \draw[-] (1.0,8.0) to (1.0,4.1);
            \draw[-] (1.0,8.0) to (1.3,8.0);
            \draw[-] (1.3,8.0) to (1.3,7.7);
            \draw[-] (1.3,7.7) to (1.6,7.7);
            \draw[-] (1.6,7.7) to (1.6,7.4);
            \draw[-] (1.6,7.4) to (1.9,7.4);
            \draw[-] (1.9,7.4) to (1.9,7.1);
            \draw[-] (1.9,7.1) to (2.2,7.1);
            \draw[-] (2.2,7.1) to (2.2,6.8);
            \draw[-] (2.2,6.8) to (2.5,6.8);
            \draw[-] (2.5,6.8) to (2.5,6.5);
            \draw[-] (2.5,6.5) to (2.8,6.5);
            \draw[-] (2.8,6.5) to (2.8,6.2);
            \draw[-] (2.8,6.2) to (3.1,6.2);
            \draw[-] (3.1,6.2) to (3.1,5.0);
            \draw[-] (3.1,5.0) to (2.8,5.0);
            \draw[-] (2.8,5.0) to (2.8,4.7);
            \draw[-] (2.8,4.7) to (2.5,4.7);
            \draw[-] (2.5,4.7) to (2.5,4.4);
            \draw[-] (2.5,4.4) to (1.9,4.4);
            \draw[-] (1.9,4.4) to (1.9,4.1);
            \draw[-] (1.9,4.1) to (1.0,4.1);
            \tikzstyle{Element}=[draw,minimum width=3mm, minimum height=3mm, inner sep=0pt]
            \node[Element,fill=lightgray] at (2.05,5.75) {1};
            \node at (0.8,6.05) {$i$};
            \draw[-] (1.0,6.05) to (2.05,6.05);
            \draw[-] (2.05,6.05) to (2.05,5.75);
            \draw[->] (2.05,5.75) to (2.5,5.75);
            \node at (0.6,5.75) {$i+1$};
            \draw[-,dashed] (1.0,5.75) to (2.05,5.75);
            \draw[->,dashed] (2.05,5.75) to (2.05,5.0);
          \end{tikzpicture}\normalsize\quad
          \begin{tikzpicture}
            \node at (0.5,6.6) {$\T'=$};\tiny
            \draw[-] (1.0,8.0) to (1.0,4.1);
            \draw[-] (1.0,8.0) to (1.3,8.0);
            \draw[-] (1.3,8.0) to (1.3,7.7);
            \draw[-] (1.3,7.7) to (1.6,7.7);
            \draw[-] (1.6,7.7) to (1.6,7.4);
            \draw[-] (1.6,7.4) to (1.9,7.4);
            \draw[-] (1.9,7.4) to (1.9,7.1);
            \draw[-] (1.9,7.1) to (2.2,7.1);
            \draw[-] (2.2,7.1) to (2.2,6.8);
            \draw[-] (2.2,6.8) to (2.5,6.8);
            \draw[-] (2.5,6.8) to (2.5,6.5);
            \draw[-] (2.5,6.5) to (2.8,6.5);
            \draw[-] (2.8,6.5) to (2.8,6.2);
            \draw[-] (2.8,6.2) to (3.1,6.2);
            \draw[-] (3.1,6.2) to (3.1,5.0);
            \draw[-] (3.1,5.0) to (2.8,5.0);
            \draw[-] (2.8,5.0) to (2.8,4.7);
            \draw[-] (2.8,4.7) to (2.5,4.7);
            \draw[-] (2.5,4.7) to (2.5,4.4);
            \draw[-] (2.5,4.4) to (1.9,4.4);
            \draw[-] (1.9,4.4) to (1.9,4.1);
            \draw[-] (1.9,4.1) to (1.0,4.1);
            \tikzstyle{Element}=[draw,minimum width=3mm, minimum height=3mm, inner sep=0pt]
            \node[Element,fill=lightgray] at (2.05,5.75) {$_{\texttt{EE}}$};
            \node at (0.8,6.05) {$i$};
            \draw[-] (1.0,6.05) to (2.05,6.05);
            \draw[->] (2.05,6.05) to (2.05,5.0);
            \node at (0.6,5.75) {$i+1$};
            \draw[->,dashed] (1.0,5.75) to (2.5,5.75);
            %\node at (1.98,5.75) {0};
            %\draw[-] (2.05,5.75) to (2.1,5.75); \draw[-] (2.05,5.75) to (2.05,5.65); \draw[-] (2.05,5.7) to (2.1,5.7); \draw[-] (2.05,5.65) to (2.1,5.65);
            %\draw[-] (2.12,5.75) to (2.17,5.75); \draw[-] (2.12,5.75) to (2.12,5.65); \draw[-] (2.12,5.7) to (2.17,5.7); \draw[-] (2.12,5.65) to (2.17,5.65);
          \end{tikzpicture}\normalsize\quad\quad\quad\quad
          \begin{tikzpicture}
            \node at (0.5,7.2) {$\T=$};\tiny
            \draw[-] (1.0,8.0) to (1.0,4.1);
            \draw[-] (1.0,8.0) to (1.3,8.0);
            \draw[-] (1.3,8.0) to (1.3,7.7);
            \draw[-] (1.3,7.7) to (1.6,7.7);
            \draw[-] (1.6,7.7) to (1.6,7.4);
            \draw[-] (1.6,7.4) to (1.9,7.4);
            \draw[-] (1.9,7.4) to (1.9,7.1);
            \draw[-] (1.9,7.1) to (2.2,7.1);
            \draw[-] (2.2,7.1) to (2.2,6.8);
            \draw[-] (2.2,6.8) to (2.5,6.8);
            \draw[-] (2.5,6.8) to (2.5,6.5);
            \draw[-] (2.5,6.5) to (2.8,6.5);
            \draw[-] (2.8,6.5) to (2.8,6.2);
            \draw[-] (2.8,6.2) to (3.1,6.2);
            \draw[-] (3.1,6.2) to (3.1,5.0);
            \draw[-] (3.1,5.0) to (2.8,5.0);
            \draw[-] (2.8,5.0) to (2.8,4.7);
            \draw[-] (2.8,4.7) to (2.5,4.7);
            \draw[-] (2.5,4.7) to (2.5,4.4);
            \draw[-] (2.5,4.4) to (1.9,4.4);
            \draw[-] (1.9,4.4) to (1.9,4.1);
            \draw[-] (1.9,4.1) to (1.0,4.1);
            \tikzstyle{Element}=[draw,minimum width=3mm, minimum height=3mm, inner sep=0pt]
            \node at (0.7,5.75) {$i$}; \node at (0.6,5.45) {$i+1$}; \node at (1.7,5.6) {$\cdot$}; \node at (1.75,5.6) {$\cdot$}; \node at (1.8,5.6) {$\cdot$}; \node at (2.3,5.6) {$\cdot$}; \node at (2.35,5.6) {$\cdot$}; \node at (2.4,5.6) {$\cdot$};
            \node[Element,fill=lightgray] at (1.45,5.75) {0}; \node[Element,fill=lightgray] at (2.05,5.75) {0}; \node[Element,fill=lightgray] at (2.65,5.75) {1};
            \node[Element,fill=lightgray] at (1.45,5.45) {1}; \node[Element,fill=lightgray] at (2.05,5.45) {1}; \node[Element,fill=lightgray] at (2.65,5.45) {1};
          \end{tikzpicture}\normalsize\quad
          \begin{tikzpicture}
            \node at (0.5,7.2) {$\T'=$};\tiny
            \draw[-] (1.0,8.0) to (1.0,4.1);
            \draw[-] (1.0,8.0) to (1.3,8.0);
            \draw[-] (1.3,8.0) to (1.3,7.7);
            \draw[-] (1.3,7.7) to (1.6,7.7);
            \draw[-] (1.6,7.7) to (1.6,7.4);
            \draw[-] (1.6,7.4) to (1.9,7.4);
            \draw[-] (1.9,7.4) to (1.9,7.1);
            \draw[-] (1.9,7.1) to (2.2,7.1);
            \draw[-] (2.2,7.1) to (2.2,6.8);
            \draw[-] (2.2,6.8) to (2.5,6.8);
            \draw[-] (2.5,6.8) to (2.5,6.5);
            \draw[-] (2.5,6.5) to (2.8,6.5);
            \draw[-] (2.8,6.5) to (2.8,6.2);
            \draw[-] (2.8,6.2) to (3.1,6.2);
            \draw[-] (3.1,6.2) to (3.1,5.0);
            \draw[-] (3.1,5.0) to (2.8,5.0);
            \draw[-] (2.8,5.0) to (2.8,4.7);
            \draw[-] (2.8,4.7) to (2.5,4.7);
            \draw[-] (2.5,4.7) to (2.5,4.4);
            \draw[-] (2.5,4.4) to (1.9,4.4);
            \draw[-] (1.9,4.4) to (1.9,4.1);
            \draw[-] (1.9,4.1) to (1.0,4.1);
            \tikzstyle{Element}=[draw,minimum width=3mm, minimum height=3mm, inner sep=0pt]
            \node at (0.7,5.75) {$i$}; \node at (0.6,5.45) {$i+1$}; \node at (1.7,5.6) {$\cdot$}; \node at (1.75,5.6) {$\cdot$}; \node at (1.8,5.6) {$\cdot$}; \node at (2.3,5.6) {$\cdot$}; \node at (2.35,5.6) {$\cdot$}; \node at (2.4,5.6) {$\cdot$};
            \node[Element,fill=lightgray] at (1.45,5.75) {1}; \node[Element,fill=lightgray] at (2.05,5.75) {1}; \node[Element,fill=lightgray] at (2.65,5.75) {1};
            \node[Element,fill=lightgray] at (1.45,5.45) {$_{\texttt{EE}}$}; \node[Element,fill=lightgray] at (2.05,5.45) {0}; \node[Element,fill=lightgray] at (2.65,5.45) {0};
            %\node at (1.38,5.45) {0};
            %\draw[-] (1.45,5.45) to (1.5,5.45); \draw[-] (1.45,5.45) to (1.45,5.35); \draw[-] (1.45,5.4) to (1.5,5.4); \draw[-] (1.45,5.35) to (1.5,5.35);
            %\draw[-] (1.52,5.45) to (1.57,5.45); \draw[-] (1.52,5.45) to (1.52,5.35); \draw[-] (1.52,5.4) to (1.57,5.4); \draw[-] (1.52,5.35) to (1.57,5.35);
          \end{tikzpicture}\normalsize\vspace{-3mm}
          \caption{(WB4-1) and (WB4-2)}\label{weakbruhat_1to0ee}
        \end{figure}
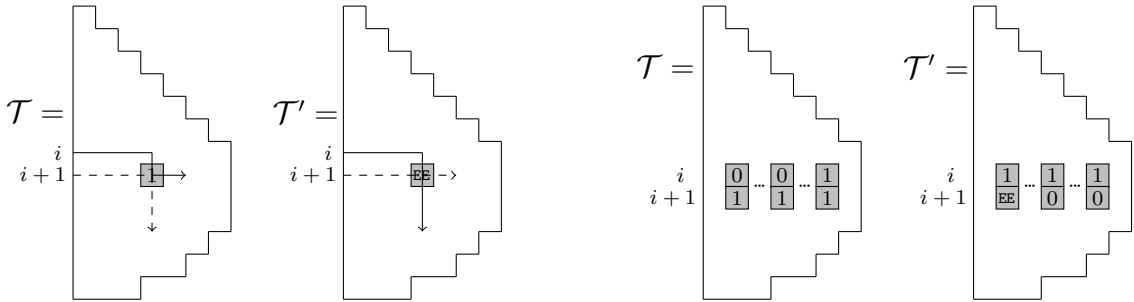

(WB5): Since $0<\sigma(i)<i$ and $0<\sigma(i+1)<i+1$, $\verb"col"\,i$ and $\verb"col"\,(i+1)$ have  $0$'s in their diagonals.
  \begin{itemize}
    \item[(WB5-1)] If two zigzag paths from $\verb"col"\,i$ and $\verb"col"\,(i+1)$ meet at a box with $1$, then replace the first such $1$ by $0$, which must be of type $NN$, so that we obtain $\mathcal{T}'$. See Figure \ref{weakbruhat_1to0nn}.
    \item[(WB5-2)] If two zigzag paths from $\verb"col"\,i$ and $\verb"col"\,(i+1)$ do not meet, then apply Lemma~\ref{rule} to the smallest rectangular part containing the topmost $1$'s of \verb"col"\,$i$ and \verb"col"\,$(i+1)$ to obtain $\T'$; see Figure \ref{weakbruhat_1to0nn}.
  \end{itemize}

        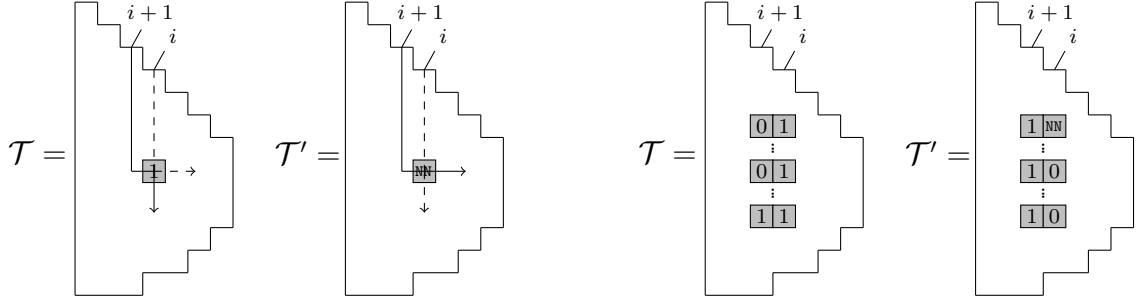
\begin{figure}[!ht]
          \begin{tikzpicture}
            \node at (0.5,6.0) {$\T=$};\tiny
            \draw[-] (1.0,8.0) to (1.0,4.1);
            \draw[-] (1.0,8.0) to (1.3,8.0);
            \draw[-] (1.3,8.0) to (1.3,7.7);
            \draw[-] (1.3,7.7) to (1.6,7.7);
            \draw[-] (1.6,7.7) to (1.6,7.4);
            \draw[-] (1.6,7.4) to (1.9,7.4);
            \draw[-] (1.9,7.4) to (1.9,7.1);
            \draw[-] (1.9,7.1) to (2.2,7.1);
            \draw[-] (2.2,7.1) to (2.2,6.8);
            \draw[-] (2.2,6.8) to (2.5,6.8);
            \draw[-] (2.5,6.8) to (2.5,6.5);
            \draw[-] (2.5,6.5) to (2.8,6.5);
            \draw[-] (2.8,6.5) to (2.8,6.2);
            \draw[-] (2.8,6.2) to (3.1,6.2);
            \draw[-] (3.1,6.2) to (3.1,5.0);
            \draw[-] (3.1,5.0) to (2.8,5.0);
            \draw[-] (2.8,5.0) to (2.8,4.7);
            \draw[-] (2.8,4.7) to (2.5,4.7);
            \draw[-] (2.5,4.7) to (2.5,4.4);
            \draw[-] (2.5,4.4) to (1.9,4.4);
            \draw[-] (1.9,4.4) to (1.9,4.1);
            \draw[-] (1.9,4.1) to (1.0,4.1);
            \tikzstyle{Element}=[draw,minimum width=3mm, minimum height=3mm, inner sep=0pt]
            \node[Element,fill=lightgray] at (2.05,5.75) {1};
            \node (i1) at (2.0,7.85) {$i+1$}; \node (i) at (2.3,7.55) {$i$};
            \draw[-] (1.75,7.4) to (i1);
            \draw[-] (2.05,7.1) to (i);
            \draw[-] (1.75,7.4) to (1.75,5.75);
            \draw[-] (1.75,5.75) to (2.05,5.75);
            \draw[->] (2.05,5.75) to (2.05,5.2);
            \draw[-,dashed] (2.05,7.1) to (2.05,5.75);
            \draw[->,dashed] (2.05,5.75) to (2.6,5.75);
          \end{tikzpicture}\normalsize\quad
          \begin{tikzpicture}
            \node at (0.5,6.0) {$\T'=$};\tiny
            \draw[-] (1.0,8.0) to (1.0,4.1);
            \draw[-] (1.0,8.0) to (1.3,8.0);
            \draw[-] (1.3,8.0) to (1.3,7.7);
            \draw[-] (1.3,7.7) to (1.6,7.7);
            \draw[-] (1.6,7.7) to (1.6,7.4);
            \draw[-] (1.6,7.4) to (1.9,7.4);
            \draw[-] (1.9,7.4) to (1.9,7.1);
            \draw[-] (1.9,7.1) to (2.2,7.1);
            \draw[-] (2.2,7.1) to (2.2,6.8);
            \draw[-] (2.2,6.8) to (2.5,6.8);
            \draw[-] (2.5,6.8) to (2.5,6.5);
            \draw[-] (2.5,6.5) to (2.8,6.5);
            \draw[-] (2.8,6.5) to (2.8,6.2);
            \draw[-] (2.8,6.2) to (3.1,6.2);
            \draw[-] (3.1,6.2) to (3.1,5.0);
            \draw[-] (3.1,5.0) to (2.8,5.0);
            \draw[-] (2.8,5.0) to (2.8,4.7);
            \draw[-] (2.8,4.7) to (2.5,4.7);
            \draw[-] (2.5,4.7) to (2.5,4.4);
            \draw[-] (2.5,4.4) to (1.9,4.4);
            \draw[-] (1.9,4.4) to (1.9,4.1);
            \draw[-] (1.9,4.1) to (1.0,4.1);
            \tikzstyle{Element}=[draw,minimum width=3mm, minimum height=3mm, inner sep=0pt]
            \node[Element,fill=lightgray] at (2.05,5.75) {$_{\texttt{NN}}$};
            \node (i1) at (2.0,7.85) {$i+1$}; \node (i) at (2.3,7.55) {$i$};
            \draw[-] (1.75,7.4) to (i1);
            \draw[-] (2.05,7.1) to (i);
            \draw[-] (1.75,7.4) to (1.75,5.75);
            \draw[-] (1.75,5.75) to (2.05,5.75);
            \draw[->,dashed] (2.05,5.75) to (2.05,5.2);
            \draw[-,dashed] (2.05,7.1) to (2.05,5.75);
            \draw[->] (2.05,5.75) to (2.6,5.75);
            %\node at (1.98,5.75) {0};
            %\draw[-] (2.05,5.75) to (2.05,5.65); \draw[-] (2.05,5.75) to (2.1,5.65); \draw[-] (2.1,5.75) to (2.1,5.65);
            %\draw[-] (2.12,5.75) to (2.12,5.65); \draw[-] (2.12,5.75) to (2.17,5.65); \draw[-] (2.17,5.75) to (2.17,5.65);
          \end{tikzpicture}\normalsize\quad\quad\quad\quad
          \begin{tikzpicture}
            \node at (0.5,6.0) {$\T=$};\tiny
            \draw[-] (1.0,8.0) to (1.0,4.1);
            \draw[-] (1.0,8.0) to (1.3,8.0);
            \draw[-] (1.3,8.0) to (1.3,7.7);
            \draw[-] (1.3,7.7) to (1.6,7.7);
            \draw[-] (1.6,7.7) to (1.6,7.4);
            \draw[-] (1.6,7.4) to (1.9,7.4);
            \draw[-] (1.9,7.4) to (1.9,7.1);
            \draw[-] (1.9,7.1) to (2.2,7.1);
            \draw[-] (2.2,7.1) to (2.2,6.8);
            \draw[-] (2.2,6.8) to (2.5,6.8);
            \draw[-] (2.5,6.8) to (2.5,6.5);
            \draw[-] (2.5,6.5) to (2.8,6.5);
            \draw[-] (2.8,6.5) to (2.8,6.2);
            \draw[-] (2.8,6.2) to (3.1,6.2);
            \draw[-] (3.1,6.2) to (3.1,5.0);
            \draw[-] (3.1,5.0) to (2.8,5.0);
            \draw[-] (2.8,5.0) to (2.8,4.7);
            \draw[-] (2.8,4.7) to (2.5,4.7);
            \draw[-] (2.5,4.7) to (2.5,4.4);
            \draw[-] (2.5,4.4) to (1.9,4.4);
            \draw[-] (1.9,4.4) to (1.9,4.1);
            \draw[-] (1.9,4.1) to (1.0,4.1);
            \tikzstyle{Element}=[draw,minimum width=3mm, minimum height=3mm, inner sep=0pt]
            \node[Element,fill=lightgray] at (1.75,6.35) {0}; \node[Element,fill=lightgray] at (2.05,6.35) {1};
            \node at (1.9,6.1) {$\cdot$}; \node at (1.9,6.05) {$\cdot$}; \node at (1.9,6.0) {$\cdot$};
            \node[Element,fill=lightgray] at (1.75,5.75) {0}; \node[Element,fill=lightgray] at (2.05,5.75) {1};
            \node at (1.9,5.5) {$\cdot$}; \node at (1.9,5.45) {$\cdot$}; \node at (1.9,5.4) {$\cdot$};
            \node[Element,fill=lightgray] at (1.75,5.15) {1}; \node[Element,fill=lightgray] at (2.05,5.15) {1};
            \node (i1) at (2.0,7.85) {$i+1$}; \node (i) at (2.3,7.55) {$i$};
            \draw[-] (1.75,7.4) to (i1);
            \draw[-] (2.05,7.1) to (i);
          \end{tikzpicture}\normalsize\quad
          \begin{tikzpicture}
            \node at (0.5,6.0) {$\T'=$};\tiny
            \draw[-] (1.0,8.0) to (1.0,4.1);
            \draw[-] (1.0,8.0) to (1.3,8.0);
            \draw[-] (1.3,8.0) to (1.3,7.7);
            \draw[-] (1.3,7.7) to (1.6,7.7);
            \draw[-] (1.6,7.7) to (1.6,7.4);
            \draw[-] (1.6,7.4) to (1.9,7.4);
            \draw[-] (1.9,7.4) to (1.9,7.1);
            \draw[-] (1.9,7.1) to (2.2,7.1);
            \draw[-] (2.2,7.1) to (2.2,6.8);
            \draw[-] (2.2,6.8) to (2.5,6.8);
            \draw[-] (2.5,6.8) to (2.5,6.5);
            \draw[-] (2.5,6.5) to (2.8,6.5);
            \draw[-] (2.8,6.5) to (2.8,6.2);
            \draw[-] (2.8,6.2) to (3.1,6.2);
            \draw[-] (3.1,6.2) to (3.1,5.0);
            \draw[-] (3.1,5.0) to (2.8,5.0);
            \draw[-] (2.8,5.0) to (2.8,4.7);
            \draw[-] (2.8,4.7) to (2.5,4.7);
            \draw[-] (2.5,4.7) to (2.5,4.4);
            \draw[-] (2.5,4.4) to (1.9,4.4);
            \draw[-] (1.9,4.4) to (1.9,4.1);
            \draw[-] (1.9,4.1) to (1.0,4.1);
            \tikzstyle{Element}=[draw,minimum width=3mm, minimum height=3mm, inner sep=0pt]
            \node[Element,fill=lightgray] at (1.75,6.35) {1}; \node[Element,fill=lightgray] at (2.05,6.35) {$_{\texttt{NN}}$};
            \node at (1.9,6.1) {$\cdot$}; \node at (1.9,6.05) {$\cdot$}; \node at (1.9,6.0) {$\cdot$};
            \node[Element,fill=lightgray] at (1.75,5.75) {1}; \node[Element,fill=lightgray] at (2.05,5.75) {0};
            \node at (1.9,5.5) {$\cdot$}; \node at (1.9,5.45) {$\cdot$}; \node at (1.9,5.4) {$\cdot$};
            \node[Element,fill=lightgray] at (1.75,5.15) {1}; \node[Element,fill=lightgray] at (2.05,5.15) {0};
            \node (i1) at (2.0,7.85) {$i+1$}; \node (i) at (2.3,7.55) {$i$};
            \draw[-] (1.75,7.4) to (i1);
            \draw[-] (2.05,7.1) to (i);
            %\node at (1.98,6.35) {0};
            %\draw[-] (2.05,6.35) to (2.1,6.35); \draw[-] (2.05,6.35) to (2.05,6.25); \draw[-] (2.05,6.3) to (2.1,6.3); \draw[-] (2.05,6.25) to (2.1,6.25);
            %\draw[-] (2.12,6.35) to (2.17,6.35); \draw[-] (2.12,6.35) to (2.12,6.25); \draw[-] (2.12,6.3) to (2.17,6.3); \draw[-] (2.12,6.25) to (2.17,6.25);
          \end{tikzpicture}\normalsize\vspace{-3mm}
          \caption{(WB5-1) and (WB5-2)}\label{weakbruhat_1to0nn}
        \end{figure}

(WB6): Since $\sigma(i)<0<i$ and $\sigma(i+1)<i+1$, $\verb"row"\,(-i)$ and $\verb"row"\,(-(i+1))$ are two consecutive rows and there is $1$ in the diagonal of $\verb"row"\,(-i)$. There are two cases depending on the sign of $\sigma(i+1)$:
  \begin{itemize}
    \item[(WB6-1)] Let $\sigma(i+1)<0$, then, $\verb"row"\,(-(i+1))$ has 1 in its diagonal. If two zigzag paths from $\verb"row"\,(-i)$ and $\verb"row"\,(-(i+1))$ meet at a box with $1$, then replace the first such $1$ by $0$, which must be of type ${EE}$ so that we obtain $\mathcal{T}'$; see Figure \ref{weakbruhat_1to0ee_0hinge}. If two zigzag paths from $\verb"row"\,(-i)$ and $\verb"row"\,(-(i+1))$ do not meet, then apply Lemma~\ref{rule} to the smallest rectangular part containing the leftmost $1$'s in $\verb"row"\,(-i)$ and $\verb"row"\,(-(i+1))$ to obtain $\T'$.
    \item[(WB6-2)] Let $\sigma(i+1)>0$, then $\verb"row"\,(-(i+1))$ has 0 in its diagonal and two zigzag paths from $\verb"row"\,(-i)$ and $\verb"row"\,(-(i+1))$ do not meet. We apply Lemma~\ref{rule} to the smallest rectangular part containing the leftmost $1$'s in $\verb"row"\,(-i)$ and $\verb"row"\,(-(i+1))$ to obtain $\T'$, where we assume that there is a (imaginary) $1$ right above the diagonal of $\verb"row"\,(-i)$. See Figure~\ref{weakbruhat_1to0ee_0hinge}.
        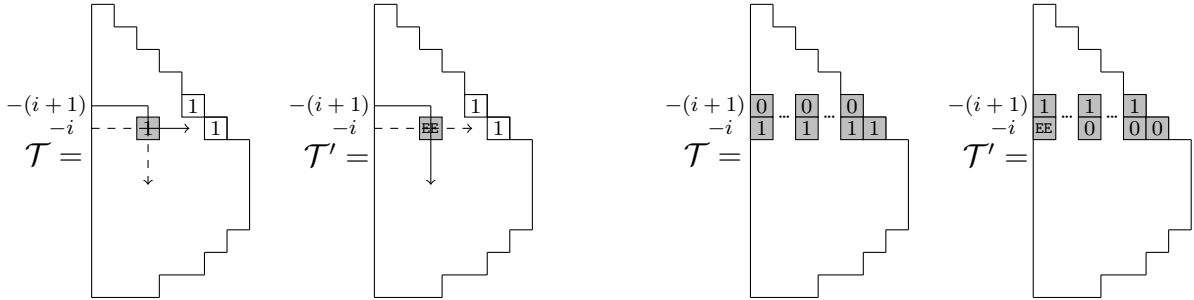
\begin{figure}[!ht]
          \begin{tikzpicture}
            \node at (0.5,6.0) {$\T=$};\tiny
            \draw[-] (1.0,8.0) to (1.0,4.1);
            \draw[-] (1.0,8.0) to (1.3,8.0);
            \draw[-] (1.3,8.0) to (1.3,7.7);
            \draw[-] (1.3,7.7) to (1.6,7.7);
            \draw[-] (1.6,7.7) to (1.6,7.4);
            \draw[-] (1.6,7.4) to (1.9,7.4);
            \draw[-] (1.9,7.4) to (1.9,7.1);
            \draw[-] (1.9,7.1) to (2.2,7.1);
            \draw[-] (2.2,7.1) to (2.2,6.8);
            \draw[-] (2.2,6.8) to (2.5,6.8);
            \draw[-] (2.5,6.8) to (2.5,6.5);
            \draw[-] (2.5,6.5) to (2.8,6.5);
            \draw[-] (2.8,6.5) to (2.8,6.2);
            \draw[-] (2.8,6.2) to (3.1,6.2);
            \draw[-] (3.1,6.2) to (3.1,5.0);
            \draw[-] (3.1,5.0) to (2.8,5.0);
            \draw[-] (2.8,5.0) to (2.8,4.7);
            \draw[-] (2.8,4.7) to (2.5,4.7);
            \draw[-] (2.5,4.7) to (2.5,4.4);
            \draw[-] (2.5,4.4) to (1.9,4.4);
            \draw[-] (1.9,4.4) to (1.9,4.1);
            \draw[-] (1.9,4.1) to (1.0,4.1);
            \tikzstyle{Element}=[draw,minimum width=3mm, minimum height=3mm, inner sep=0pt]
            \node[Element] at (2.35,6.65) {1};
            \node[Element] at (2.65,6.35) {1};
            \node[Element,fill=lightgray] at (1.75,6.35) {1};
            \node at (0.4,6.65) {$-(i+1)$};
            \draw[-] (1.0,6.65) to (1.75,6.65);
            \draw[-] (1.75,6.65) to (1.75,6.35);
            \draw[->] (1.75,6.35) to (2.3,6.35);
            \node at (0.6,6.35) {$-i$};
            \draw[-,dashed] (1.0,6.35) to (1.75,6.35);
            \draw[->,dashed] (1.75,6.35) to (1.75,5.6);
          \end{tikzpicture}\normalsize\quad
          \begin{tikzpicture}
            \node at (0.5,6.0) {$\T'=$};\tiny
            \draw[-] (1.0,8.0) to (1.0,4.1);
            \draw[-] (1.0,8.0) to (1.3,8.0);
            \draw[-] (1.3,8.0) to (1.3,7.7);
            \draw[-] (1.3,7.7) to (1.6,7.7);
            \draw[-] (1.6,7.7) to (1.6,7.4);
            \draw[-] (1.6,7.4) to (1.9,7.4);
            \draw[-] (1.9,7.4) to (1.9,7.1);
            \draw[-] (1.9,7.1) to (2.2,7.1);
            \draw[-] (2.2,7.1) to (2.2,6.8);
            \draw[-] (2.2,6.8) to (2.5,6.8);
            \draw[-] (2.5,6.8) to (2.5,6.5);
            \draw[-] (2.5,6.5) to (2.8,6.5);
            \draw[-] (2.8,6.5) to (2.8,6.2);
            \draw[-] (2.8,6.2) to (3.1,6.2);
            \draw[-] (3.1,6.2) to (3.1,5.0);
            \draw[-] (3.1,5.0) to (2.8,5.0);
            \draw[-] (2.8,5.0) to (2.8,4.7);
            \draw[-] (2.8,4.7) to (2.5,4.7);
            \draw[-] (2.5,4.7) to (2.5,4.4);
            \draw[-] (2.5,4.4) to (1.9,4.4);
            \draw[-] (1.9,4.4) to (1.9,4.1);
            \draw[-] (1.9,4.1) to (1.0,4.1);
            \tikzstyle{Element}=[draw,minimum width=3mm, minimum height=3mm, inner sep=0pt]
            \node[Element] at (2.35,6.65) {1};
            \node[Element] at (2.65,6.35) {1};
            \node[Element,fill=lightgray] at (1.75,6.35) {$_{\texttt{EE}}$};
            \node at (0.4,6.65) {$-(i+1)$};
            \draw[-] (1.0,6.65) to (1.75,6.65);
            \draw[-] (1.75,6.65) to (1.75,6.35);
            \draw[->,dashed] (1.75,6.35) to (2.3,6.35);
            \node at (0.6,6.35) {$-i$};
            \draw[-,dashed] (1.0,6.35) to (1.75,6.35);
            \draw[->] (1.75,6.35) to (1.75,5.6);
          \end{tikzpicture}\normalsize\quad\quad\quad\quad
          \begin{tikzpicture}
            \node at (0.5,6.0) {$\T=$};\tiny
            \draw[-] (1.0,8.0) to (1.0,4.1);
            \draw[-] (1.0,8.0) to (1.3,8.0);
            \draw[-] (1.3,8.0) to (1.3,7.7);
            \draw[-] (1.3,7.7) to (1.6,7.7);
            \draw[-] (1.6,7.7) to (1.6,7.4);
            \draw[-] (1.6,7.4) to (1.9,7.4);
            \draw[-] (1.9,7.4) to (1.9,7.1);
            \draw[-] (1.9,7.1) to (2.2,7.1);
            \draw[-] (2.2,7.1) to (2.2,6.8);
            \draw[-] (2.2,6.8) to (2.5,6.8);
            \draw[-] (2.5,6.8) to (2.5,6.5);
            \draw[-] (2.5,6.5) to (2.8,6.5);
            \draw[-] (2.8,6.5) to (2.8,6.2);
            \draw[-] (2.8,6.2) to (3.1,6.2);
            \draw[-] (3.1,6.2) to (3.1,5.0);
            \draw[-] (3.1,5.0) to (2.8,5.0);
            \draw[-] (2.8,5.0) to (2.8,4.7);
            \draw[-] (2.8,4.7) to (2.5,4.7);
            \draw[-] (2.5,4.7) to (2.5,4.4);
            \draw[-] (2.5,4.4) to (1.9,4.4);
            \draw[-] (1.9,4.4) to (1.9,4.1);
            \draw[-] (1.9,4.1) to (1.0,4.1);
            \tikzstyle{Element}=[draw,minimum width=3mm, minimum height=3mm, inner sep=0pt]
            \node at (0.4,6.65) {$-(i+1)$}; \node at (0.6,6.35) {$-i$};
            \node at (1.4,6.5) {$\cdot$}; \node at (1.45,6.5) {$\cdot$}; \node at (1.5,6.5) {$\cdot$}; \node at (2.0,6.5) {$\cdot$}; \node at (2.05,6.5) {$\cdot$}; \node at (2.1,6.5) {$\cdot$};
            \node[Element,fill=lightgray] at (1.15,6.65) {0}; \node[Element,fill=lightgray] at (1.75,6.65) {0}; \node[Element,fill=lightgray] at (2.35,6.65) {0};
            \node[Element,fill=lightgray] at (1.15,6.35) {1}; \node[Element,fill=lightgray] at (1.75,6.35) {1}; \node[Element,fill=lightgray] at (2.35,6.35) {1}; \node[Element,fill=lightgray] at (2.65,6.35) {1};
%            \node at (2.5,6.1) {$\cdot$}; \node at (2.5,6.05) {$\cdot$}; \node at (2.5,6.0) {$\cdot$};
%            \node[Element,fill=lightgray] at (2.35,5.75) {0}; \node[Element,fill=lightgray] at (2.65,5.75) {1};
%            \node at (2.5,5.5) {$\cdot$}; \node at (2.5,5.45) {$\cdot$}; \node at (2.5,5.4) {$\cdot$};
%            \node[Element,fill=lightgray] at (2.35,5.15) {1}; \node[Element,fill=lightgray] at (2.65,5.15) {1};
          \end{tikzpicture}\normalsize\quad
          \begin{tikzpicture}
            \node at (0.5,6.0) {$\T'=$};\tiny
            \draw[-] (1.0,8.0) to (1.0,4.1);
            \draw[-] (1.0,8.0) to (1.3,8.0);
            \draw[-] (1.3,8.0) to (1.3,7.7);
            \draw[-] (1.3,7.7) to (1.6,7.7);
            \draw[-] (1.6,7.7) to (1.6,7.4);
            \draw[-] (1.6,7.4) to (1.9,7.4);
            \draw[-] (1.9,7.4) to (1.9,7.1);
            \draw[-] (1.9,7.1) to (2.2,7.1);
            \draw[-] (2.2,7.1) to (2.2,6.8);
            \draw[-] (2.2,6.8) to (2.5,6.8);
            \draw[-] (2.5,6.8) to (2.5,6.5);
            \draw[-] (2.5,6.5) to (2.8,6.5);
            \draw[-] (2.8,6.5) to (2.8,6.2);
            \draw[-] (2.8,6.2) to (3.1,6.2);
            \draw[-] (3.1,6.2) to (3.1,5.0);
            \draw[-] (3.1,5.0) to (2.8,5.0);
            \draw[-] (2.8,5.0) to (2.8,4.7);
            \draw[-] (2.8,4.7) to (2.5,4.7);
            \draw[-] (2.5,4.7) to (2.5,4.4);
            \draw[-] (2.5,4.4) to (1.9,4.4);
            \draw[-] (1.9,4.4) to (1.9,4.1);
            \draw[-] (1.9,4.1) to (1.0,4.1);
            \tikzstyle{Element}=[draw,minimum width=3mm, minimum height=3mm, inner sep=0pt]
            \node at (0.4,6.65) {$-(i+1)$}; \node at (0.6,6.35) {$-i$};
            \node at (1.4,6.5) {$\cdot$}; \node at (1.45,6.5) {$\cdot$}; \node at (1.5,6.5) {$\cdot$}; \node at (2.0,6.5) {$\cdot$}; \node at (2.05,6.5) {$\cdot$}; \node at (2.1,6.5) {$\cdot$};
            \node[Element,fill=lightgray] at (1.15,6.65) {1}; \node[Element,fill=lightgray] at (1.75,6.65) {1}; \node[Element,fill=lightgray] at (2.35,6.65) {1};
            \node[Element,fill=lightgray] at (1.15,6.35) {$_{\texttt{EE}}$}; \node[Element,fill=lightgray] at (1.75,6.35) {0}; \node[Element,fill=lightgray] at (2.35,6.35) {0}; \node[Element,fill=lightgray] at (2.65,6.35) {0};
%            \node at (2.5,6.1) {$\cdot$}; \node at (2.5,6.05) {$\cdot$}; \node at (2.5,6.0) {$\cdot$};
%            \node[Element,fill=lightgray] at (2.35,5.75) {0}; \node[Element,fill=lightgray] at (2.65,5.75) {1};
%            \node at (2.5,5.5) {$\cdot$}; \node at (2.5,5.45) {$\cdot$}; \node at (2.5,5.4) {$\cdot$};
%            \node[Element,fill=lightgray] at (2.35,5.15) {1}; \node[Element,fill=lightgray] at (2.65,5.15) {1};
            %\node at (1.08,6.35) {0};
            %\draw[-] (1.15,6.35) to (1.2,6.35); \draw[-] (1.15,6.35) to (1.15,6.25); \draw[-] (1.15,6.3) to (1.2,6.3); \draw[-] (1.15,6.25) to (1.2,6.25);
            %\draw[-] (1.22,6.35) to (1.27,6.35); \draw[-] (1.22,6.35) to (1.22,6.25); \draw[-] (1.22,6.3) to (1.27,6.3); \draw[-] (1.22,6.25) to (1.27,6.25);
          \end{tikzpicture}\normalsize\vspace{-3mm}
          \caption{(WB6-1) and (WB6-2)}\label{weakbruhat_1to0ee_0hinge}
        \end{figure}
  \end{itemize}

\begin{remark}\label{zero_column}
  In cases (WB5-1), (WB5-2), and (WB6-2), it is possible for us to have a zero column \verb"col"\,$i$, and hence a zero row \verb"row"\,$(-i)$  as a result. In this case, delete \verb"col"\,$i$ and \verb"row"\,$(-i)$ and insert the new zero row \verb"row"\,$i$ instead in the resulting permutation tableau.
  For example, consider $\sigma=-4,1,2,-3\in\sym_{4}^{B}$ and $\sigma'=\sigma s_{2}\rhd \sigma$. This is case (WB5-1), and \verb"col"\,$2$ in $\T'$ is a zero column and \verb"row"\,$(-2)$ is a zero row. Thus we insert \verb"row"\,$2$ with all $0$'s, while deleting \verb"col"\,$2$ and \verb"row"\,$(-2)$ to make $\T'$ a \emph{permutation tableau}; see Figure~\ref{newrowex}.
  \begin{figure}[!ht]
    \begin{tikzpicture}
      \tiny
      \tikzstyle{Element} = [draw, minimum width=4mm, minimum height=4mm, node distance=4mm, inner sep=0pt]
      \node[Element] [label=left:-4] at (1,3) {1};
      \node[Element] [label=left:-3] at (1,2.6) {\texttt{EE}};
      \node[Element] [label=left:-2] at (1,2.2) {\texttt{EE}};
      \node[Element] [label=left:-1] at (1,1.8) {1};

      \node[Element] at (1.4,2.6) {0};
      \node[Element] at (1.4,2.2) {0};
      \node[Element] at (1.4,1.8) {1};

      \node[Element] at (1.8,2.2) {0};
      \node[Element] at (1.8,1.8) {1};

      \node[Element] at (2.2,1.8) {1};
      \normalsize
      \node at (0,2.4) {$\T=$};
      \node (b) at (1.1,0.7) {$\sigma = -4,1,2,-3$};
      \draw[<->] (1.1,1.5) to (b);
      \node at (4,2) {$\longrightarrow$};
    \end{tikzpicture}\quad\quad
    \begin{tikzpicture}
      \tiny
      \tikzstyle{Element} = [draw, minimum width=4mm, minimum height=4mm, node distance=4mm, inner sep=0pt]
      \node[Element] [label=left:-4] at (1,3) {1};
      \node[Element] [label=left:-3] at (1,2.6) {\texttt{EE}};
      \node[Element,fill=lightgray] [label=left:\textbf{-2}] at (1,2.2) {\texttt{EE}};
      \node[Element] [label=left:-1] at (1,1.8) {1};

      \node[Element] at (1.4,2.6) {0};
      \node[Element,fill=lightgray] at (1.4,2.2) {0};
      \node[Element] at (1.4,1.8) {1};

      \node[Element,fill=lightgray] at (1.8,2.2) {0};
      \node[Element,fill=lightgray] at (1.8,1.8) {\texttt{NN}};

      \node[Element] at (2.2,1.8) {1};
      \normalsize
      \node at (0,2.4) {$\T'=$};
      \node (b) at (3.7,0.7) {$\sigma' = -4,2,1,-3$};
      \draw[<->] (3.7,1.5) to (b);
      \tiny
      \tikzstyle{Element} = [draw, minimum width=4mm, minimum height=4mm, node distance=4mm, inner sep=0pt]
      \node[Element] [label=left:-4] at (3.6,3) {1};
      \node[Element] [label=left:-3] at (3.6,2.6) {\texttt{EE}};
      \node[Element] [label=left:-1] at (3.6,2.2) {1};
      \node[Element,fill=lightgray] [label=left:\textbf{2}] at (3.6,1.8) {\texttt{EE}};

      \node[Element] at (4.0,2.6) {0};
      \node[Element] at (4.0,2.2) {1};
      \node[Element,fill=lightgray] at (4.0,1.8) {\texttt{EE}};

      \node[Element] at (4.4,2.2) {1};
      \normalsize
      \node at (2.6,2.5) {$=$};
    \end{tikzpicture}
    \vspace{-3mm}\vspace{-3mm}
    \caption{}\label{newrowex}
  \end{figure}

\end{remark}

%\vspace{5mm}

%%%%%%%%%%%%%%%%%%5

From the arguments that we have developed above, we can classify the covering relations in weak Bruhat order on $\sym_n^B$ in terms of permutation tableaux of type $B$ and also can prove a theorem on the inversion number of signed permutations.

We considered all possible cases in which $\sigma\lhd \sigma'$ holds, and we can conclude that
$\sigma'=\zeta(\mathcal{T}')$ covers $\sigma=\zeta(\mathcal{T})$ if and only if $\T$ and $\T'$ are in one of the relations; (WB1), (WB2), (WB3), (WB4-1), (WB4-2), (WB5-1), (WB5-2), (WB6-1), and (WB6-2) with some modification of zero column as in Remark~\ref{zero_column}. These relations can be reclassified in the following way.

\begin{theorem}\label{weakbruhatcover}
   For $\mathcal{T},\mathcal{T}'\in\mathcal{PT}_{n}^{B}$, and corresponding signed permutations $\sigma=\zeta(\mathcal{T})$, $\sigma'=\zeta(\mathcal{T}')$, $\sigma'$ covers $\sigma$ in weak Bruhat order if and only if $\T'$ is obtained by
  \begin{enumerate}
    \item adding a new box with $1$ to $\T$, or

    \item following the rule (WB3), or

    \item replacing a $1$ at which two zigzag paths from \verb"row"\,$(i)$ and \verb"row"\,$(i+1)$ (or \verb"col"\,$(j)$ and \verb"col"\,$(j+1)$) meet, by $0$, or

    \item replacing a rectangular part in \verb"row"\,$(i)$ and \verb"row"\,$(i+1)$ (or \verb"col"\,$(j)$ and \verb"col"\,$(j+1)$) with another rectangle  according to Lemma~\ref{rule}.
  \end{enumerate}
 \end{theorem}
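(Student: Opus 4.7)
The plan is to package the exhaustive case analysis (WB1)--(WB6) developed in this section into the four-type classification stated in the theorem. First I will verify that the nine sub-cases (WB1), (WB2), (WB3), (WB4-1), (WB4-2), (WB5-1), (WB5-2), (WB6-1), (WB6-2) truly exhaust every covering relation $\sigma \lhd \sigma'$ in $\sym_n^B$. Given $\sigma' = \sigma\, s_i$ with $\ell(\sigma') = \ell(\sigma)+1$, the case $s_i = s_0$ is (WB1). For $s_i \neq s_0$, the length condition forces $\sigma(i) < \sigma(i+1)$, which rules out simultaneously having $\sigma(i) \ge i$ and $\sigma(i+1) < i+1$; the five remaining sign/size combinations are precisely (WB2)--(WB6), as enumerated just before those cases.

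Next, for each sub-case I will justify two claims: (a) the prescribed $\T'$ is a valid permutation tableau of type $B$ (column, $\LE$-, and diagonal conditions), and (b) $\zeta(\T') = \sigma\, s_i$. For (a), the easy cases add a single box with $1$ (WB1), (WB2), or move a $1$ into a new column together with its companion zero row (WB3); the rectangular replacements are forced to preserve the $\LE$-condition by the $1$-patterns on the boundary of the rectangle (Lemma~\ref{rule}); and flipping a single meeting $1$ to $0$ in (WB4-1), (WB5-1), (WB6-1) is safe because the $\LE$-condition at that box is exactly what makes the two paths meet there. For (b), cases (WB1)--(WB3) are checked by tracing zigzag paths in a few affected rows/columns, while cases of the form ``paths meet at a $1$'' use the fact that flipping that $1$ to $0$ swaps the endpoints of the two paths and fixes all others, which is precisely the transposition $s_i$. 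The remaining cases are handled by direct application of Lemma~\ref{rule}, which asserts $\zeta(\T_*') = \zeta(\T_*)\, s_i$.

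Having shown that each (WBk) realizes a covering relation and that the list is exhaustive, I will regroup: (WB1) and (WB2) both consist of adjoining a single box filled with $1$ to $\T$, hence fall under operation~(1); (WB3) is operation~(2); the ``meeting'' situations in (WB4-1), (WB5-1), and (WB6-1) become operation~(3); and (WB4-2), (WB5-2), the non-meeting subcase of (WB6-1), and (WB6-2) all apply Lemma~\ref{rule} to an appropriate rectangle, giving operation~(4). Conversely, each of the four operations produces a $\T'$ with $\zeta(\T') = \zeta(\T)\, s_i$ for some $i$ (again by direct inspection of zigzag paths or by Lemma~\ref{rule}), and one checks that the inversion count $\inv = \ell$ increases by exactly $1$, so the relation is a covering relation.

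The main obstacles I expect are two: (i) in cases (WB6-1) and (WB6-2), where $\verb"row"\,(-i)$ carries a $1$ on its diagonal, verifying that the transformations interact correctly with the diagonal condition, and in particular that the ``imaginary $1$ above the diagonal of $\verb"row"\,(-i)$'' convention in (WB6-2) makes Lemma~\ref{rule} applicable verbatim; and (ii) handling the zero-column artifact described in Remark~\ref{zero_column}: after operations (WB5-1), (WB5-2), or (WB6-2), $\verb"col"\,i$ (and hence $\verb"row"\,(-i)$) can become all zeros, and one must confirm that deleting these and inserting the new zero $\verb"row"\,i$ yields a genuine permutation tableau of type $B$ whose image under $\zeta$ still equals $\sigma\, s_i$.
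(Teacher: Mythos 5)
Your proposal is correct and follows essentially the same route as the paper: the paper's proof is simply the observation that the exhaustive case analysis (WB1)--(WB6) developed immediately before the theorem regroups into the four stated operations, which is exactly your plan. If anything you are slightly more precise than the paper in placing the non-meeting subcase of (WB6-1) under operation (4) rather than lumping all of (WB6-1) into operation (3).
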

\begin{proof} It is easy to see that (WB1) and (WB2) are of the first type, (WB4-1), (WB5-1), and (WB6-1) are of the third type and (WB4-2), (WB5-2), and (WB6-2) are of the fourth type.
\end{proof}

\begin{example}
  Let $\sigma=-2,-4,5,3,1\in\sym_{5}^{B}$ and $\T=\zeta^{-1}(\sigma)$ as in Example~\ref{exsumofalcr}. We also let $\sigma_{i}=\sigma s_{i}$ and $\T_{i}=\zeta^{-1}(\sigma_{i})$ for $i\in \{0,1,2,3,4\}$. Since the pair $(2,3)$ is not an inversion of $\sigma$, and the pairs $(1,2),(3,4),(4,5)$ are inversions of $\sigma$, $\sigma\lhd\sigma_{2}$ and $\sigma_{j}\lhd\sigma$ for $j=0,1,3,4$. The covering relations in weak Bruhat order of the corresponding permutation tableaux of type $B$ are shown in Figure~\ref{excovering}.
  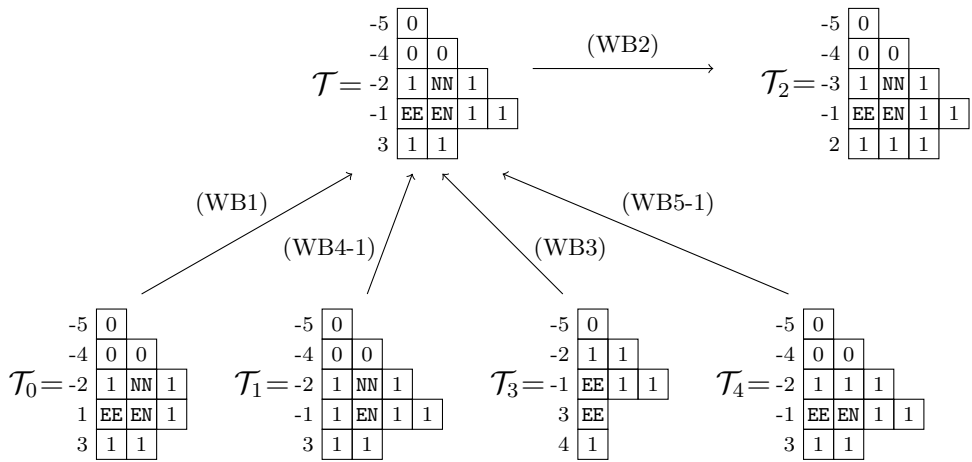
\begin{figure}[!ht]
    \begin{tikzpicture}
      \normalsize
      \node at (0.0,1.2) {$\T_{0}\hspace{-1mm}=$};
      \tikzstyle{Element} = [draw, minimum width=4mm, minimum height=4mm, inner sep=0pt]
      \tiny
      \node [Element] [label=left:-5] at (1.0,2.0) {0};
      \node [Element] [label=left:-4] at (1.0,1.6) {0};
      \node [Element] [label=left:-2] at (1.0,1.2) {1};
      \node [Element] [label=left: 1] at (1.0,0.8) {\texttt{EE}};
      \node [Element] [label=left: 3] at (1.0,0.4) {1};%%%%%%%%%%%
      \node [Element] at (1.4,1.6) {0};
      \node [Element] at (1.4,1.2) {\texttt{NN}};
      \node [Element] at (1.4,0.8) {\texttt{EN}};
      \node [Element] at (1.4,0.4) {1};%%%%%%%%%%
      \node [Element] at (1.8,1.2) {1};
      \node [Element] at (1.8,0.8) {1};

      \normalsize
      \node at (3.0,1.2) {$\T_{1}\hspace{-1mm}=$};
      \tikzstyle{Element} = [draw, minimum width=4mm, minimum height=4mm, inner sep=0pt]
      \tiny
      \node [Element] [label=left:-5] at (4.0,2.0) {0};
      \node [Element] [label=left:-4] at (4.0,1.6) {0};
      \node [Element] [label=left:-2] at (4.0,1.2) {1};
      \node [Element] [label=left:-1] at (4.0,0.8) {1};
      \node [Element] [label=left: 3] at (4.0,0.4) {1};%%%%%%%%%%%
      \node [Element] at (4.4,1.6) {0};
      \node [Element] at (4.4,1.2) {\texttt{NN}};
      \node [Element] at (4.4,0.8) {\texttt{EN}};
      \node [Element] at (4.4,0.4) {1};%%%%%%%%%%
      \node [Element] at (4.8,1.2) {1};
      \node [Element] at (4.8,0.8) {1};%%%%%%%%%%
      \node [Element] at (5.2,0.8) {1};

      \normalsize
      \node at (6.4,1.2) {$\T_{3}\hspace{-1mm}=$};
      \tikzstyle{Element} = [draw, minimum width=4mm, minimum height=4mm, inner sep=0pt]
      \tiny
      \node [Element] [label=left:-5] at (7.4,2.0) {0};
      \node [Element] [label=left:-2] at (7.4,1.6) {1};
      \node [Element] [label=left:-1] at (7.4,1.2) {\texttt{EE}};
      \node [Element] [label=left: 3] at (7.4,0.8) {\texttt{EE}};
      \node [Element] [label=left: 4] at (7.4,0.4) {1};%%%%%%%%%%%
      \node [Element] at (7.8,1.6) {1};
      \node [Element] at (7.8,1.2) {1};%%%%%%%%%%
      \node [Element] at (8.2,1.2) {1};

      \normalsize
      \node at (9.4,1.2) {$\T_{4}\hspace{-1mm}=$};
      \tikzstyle{Element} = [draw, minimum width=4mm, minimum height=4mm, inner sep=0pt]
      \tiny
      \node [Element] [label=left:-5] at (10.4,2.0) {0};
      \node [Element] [label=left:-4] at (10.4,1.6) {0};
      \node [Element] [label=left:-2] at (10.4,1.2) {1};
      \node [Element] [label=left:-1] at (10.4,0.8) {\texttt{EE}};
      \node [Element] [label=left: 3] at (10.4,0.4) {1};%%%%%%%%%%%
      \node [Element] at (10.8,1.6) {0};
      \node [Element] at (10.8,1.2) {1};
      \node [Element] at (10.8,0.8) {\texttt{EN}};
      \node [Element] at (10.8,0.4) {1};%%%%%%%%%%
      \node [Element] at (11.2,1.2) {1};
      \node [Element] at (11.2,0.8) {1};%%%%%%%%%%
      \node [Element] at (11.6,0.8) {1};

      \normalsize
      \node at (4.0,5.2) {$\T\hspace{-1mm}=$};
      \tikzstyle{Element} = [draw, minimum width=4mm, minimum height=4mm, inner sep=0pt]
      \tiny
      \node [Element] [label=left:-5] at (5.0,6.0) {0};
      \node [Element] [label=left:-4] at (5.0,5.6) {0};
      \node [Element] [label=left:-2] at (5.0,5.2) {1};
      \node [Element] [label=left:-1] at (5.0,4.8) {\texttt{EE}};
      \node [Element] [label=left: 3] at (5.0,4.4) {1};%%%%%%%%%%%
      \node [Element] at (5.4,5.6) {0};
      \node [Element] at (5.4,5.2) {\texttt{NN}};
      \node [Element] at (5.4,4.8) {\texttt{EN}};
      \node [Element] at (5.4,4.4) {1};%%%%%%%%%%
      \node [Element] at (5.8,5.2) {1};
      \node [Element] at (5.8,4.8) {1};%%%%%%%%%%
      \node [Element] at (6.2,4.8) {1};

      \normalsize
      \node at (10.0,5.2) {$\T_{2}\hspace{-1mm}=$};
      \tikzstyle{Element} = [draw, minimum width=4mm, minimum height=4mm, inner sep=0pt]
      \tiny
      \node [Element] [label=left:-5] at (11.0,6.0) {0};
      \node [Element] [label=left:-4] at (11.0,5.6) {0};
      \node [Element] [label=left:-3] at (11.0,5.2) {1};
      \node [Element] [label=left:-1] at (11.0,4.8) {\texttt{EE}};
      \node [Element] [label=left: 2] at (11.0,4.4) {1};%%%%%%%%%%%
      \node [Element] at (11.4,5.6) {0};
      \node [Element] at (11.4,5.2) {\texttt{NN}};
      \node [Element] at (11.4,4.8) {\texttt{EN}};
      \node [Element] at (11.4,4.4) {1};%%%%%%%%%%
      \node [Element] at (11.8,5.2) {1};
      \node [Element] at (11.8,4.8) {1};
      \node [Element] at (11.8,4.4) {1};%%%%%%%%%%
      \node [Element] at (12.2,4.8) {1};

      \scriptsize
      \draw[->] (1.4,2.4) to (4.2,4.0);
      \node at (2.6,3.6) {(WB1)};
      \draw[->] (4.4,2.4) to (5.0,4.0);
      \node at (3.9,3.0) {(WB4-1)};
      \draw[->] (7.0,2.4) to (5.4,4.0);
      \node at (7.1,3.0) {(WB3)};
      \draw[->] (10.0,2.4) to (6.2,4.0);
      \node at (8.4,3.6) {(WB5-1)};
      \draw[->] (6.6,5.4) to (9.0,5.4);
      \node at (7.8,5.7) {(WB2)};
    \end{tikzpicture}\vspace{-3mm}
  \caption{The covering relations in weak Bruhat order}\label{excovering}
  \end{figure}
\end{example}

 \begin{theorem}\label{bruhatinv}
  Let $\sigma\in \sym_n^B$ and $\mathcal{T}=\zeta^{-1}(\sigma)\in\mathcal{PT}_{n}^{B}$ then
  \begin{equation}\label{inveq}
     \ell(\sigma)=\inv(\sigma)=2\{\zero_{EE}(\mathcal{T})+\zero_{NN}(\mathcal{T})\} + \one(\mathcal{T}).
  \end{equation}
 \end{theorem}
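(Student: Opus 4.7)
The plan is to prove Theorem~\ref{bruhatinv} by induction on $\ell(\sigma)=\inv(\sigma)$, using the classification of weak Bruhat covers given by Theorem~\ref{weakbruhatcover} to track how
\[
f(\T):=2(\zero_{EE}(\T)+\zero_{NN}(\T))+\one(\T)
\]
changes under each cover. For the base case $\sigma=e$, the tableau $\T=\zeta^{-1}(e)$ has $n$ positive zero rows, no columns, no $1$'s, and no typed zeros, so $f(\T)=0=\ell(e)$.

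For the inductive step, let $\sigma\lhd\sigma'$ with $\T=\zeta^{-1}(\sigma)$ and $\T'=\zeta^{-1}(\sigma')$; I must verify $f(\T')-f(\T)=1$ in each of the four operation types of Theorem~\ref{weakbruhatcover}. Operation~(1), adding a new box filled with $1$ (cases (WB1) and (WB2)), simply increases $\one$ by $1$ while leaving every other zero's type untouched, so $\Delta f=1$. Operation~(3), turning a $1$ at the intersection of two zigzag paths into a $0$ of type $EE$ or $NN$ (cases (WB4-1), (WB5-1), (WB6-1)), decreases $\one$ by $1$ and increases $\zero_{EE}+\zero_{NN}$ by $1$; the key point is that the two paths meeting at this cell are both of type $E$ or both of type $N$, so exchanging them beyond the intersection does not alter the type of any other zero, yielding $\Delta f=2-1=1$. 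Operation~(4), the rectangular swap of Lemma~\ref{rule} (cases (WB4-2), (WB5-2), (WB6-2)), is dispatched directly by Lemma~\ref{rule}: the interior zeros $0_{j'}$ of the new rectangle match the types of the $0_j$, exactly one new zero of type $EE$ or $NN$ is created, and exactly one $1$ is lost, giving $\Delta f=1$ again.

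Operation~(2), namely the (WB3) rearrangement, is the most delicate: a positive zero row is deleted and a new column (together with its associated negative zero row) containing a single $1$ at its bottom is inserted. Here $\one$ increases by $1$, and the substantive verification reduces to showing $\Delta(\zero_{EE}+\zero_{NN})=0$. A case analysis of the zero types in the deleted row (which can only be $EE$ or $EN$) against those in the new column and its associated negative zero row (which can only be $EN$ or $NN$), together with a matching via the zigzag structure, produces the required cancellation; the zero-column corrections of Remark~\ref{zero_column}, when they occur, do not disturb the counts of $\one$ or of typed zeros and are handled analogously. The main obstacle is precisely this bookkeeping for (WB3), since both the row structure and the column structure of $\T$ are altered simultaneously and the zigzag paths reroute nontrivially; the other cases all fall out cleanly from Theorem~\ref{weakbruhatcover}, Lemma~\ref{rule}, and the ``same-type exchange'' observation noted above.
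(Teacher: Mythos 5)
Your proposal is correct and follows essentially the same route as the paper: induction on $\ell(\sigma)$ along a reduced expression, with the base case $\T_{\varnothing}$ and a case-by-case verification that $2(\zero_{EE}+\zero_{NN})+\one$ increases by exactly $1$ under each covering move (WB1)--(WB6-2), invoking Lemma~\ref{rule} for the rectangular-swap cases. The paper is in fact terser than you are (it simply asserts ``we can check'' for all cases, including (WB3)), so your added bookkeeping for operations (2)--(4) only elaborates the same argument.
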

 \begin{proof} Let $\sigma=s_{a_1}s_{a_2}\cdots s_{a_\ell}$ be a reduced expression of $\sigma$ so that $\ell = \ell(\sigma)$. Then $$\ell(s_{a_1}\cdots s_{a_{i}} s_{a_{i+1}})=\ell(s_{a_1}\cdots s_{a_{i}})+1, \mbox{ and } s_{a_1}\cdots s_{a_{i}}\lhd s_{a_1}\cdots s_{a_{i}} s_{a_{i+1}} \mbox{ for } i\in [\ell-1] \, .$$

The only signed permutation of length $0$ is the identity permutation and the permutation tableau $\T_{\varnothing}$ corresponding to the identity permutation is the one with $n$ rows and no column, having no box. Since $2\{\zero_{EE}(\mathcal{T_{\varnothing}})+\zero_{NN}(\mathcal{T_{\varnothing}})\} + \one(\mathcal{T_{\varnothing}})=0$, the equation~(\ref{inveq}) holds for the signed permutation of length $0$. Furthermore, we can check that $2\{\zero_{EE}(\T')+\zero_{NN}(\T')\}+\one(\T')=2\{\zero_{EE}(\T)+\zero_{NN}(\T)\}+\one(\T)+1$ holds in each covering relation (WB1), (WB2), (WB3), (WB4-1), (WB4-2), (WB5-1), (WB5-2), (WB6-1), and (WB6-2); where we need Lemma~\ref{rule} for (WB4-2), (WB5-2), and (WB6-2). This completes the proof.
 \end{proof}

%\begin{example}
 % Let $\sigma =-2,-4,5,3,1\in\sym_{5}^{B}$ with $\zero_{EE}(\zeta^{-1}(\sigma))=1$, $\zero_{NN}(\zeta^{-1}(\sigma))=1$ and $\one(\zeta^{-1}(\sigma))=6$ in Example~\ref{exsumofalcr} and Figure~\ref{ptex}.
  %We also have $\ell(\sigma)=\inv(\sigma)=10$ in Example~\ref{exinv}.
  %Thus, $\inv(\sigma) = 2 \{ \zero_{EE}(\zeta^{-1}(\sigma))+\zero_{NN}(\zeta^{-1}(\sigma))\}+\one(\zeta^{-1}(\sigma))=10$, hence, Theorem~\ref{bruhatinv} holds for $\sigma$.
%\end{example}

We can rewrite the equation~(\ref{inveq}) in terms of permutation statistics due to Proposition~\ref{alignments} and Proposition~\ref{propdiagram}. We make a remark that the following equation is proved in \cite{HS-JZ} for  permutations of type $A$.

 \begin{corollary}\label{invalcr}
   Let $\sigma\in\sym_{n}^{B}$, then
   \begin{equation}
     \ell(\sigma)=\inv(\sigma)=2\al_{nest}(\sigma)+\crs(\sigma)+n-\wex(\sigma).
   \end{equation}
 \end{corollary}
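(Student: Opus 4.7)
The plan is to reduce Corollary~\ref{invalcr} directly to Theorem~\ref{bruhatinv} by rewriting each tableau statistic on the right-hand side of \eqref{inveq} in terms of a permutation statistic of $\sigma$.

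First, I would handle the nesting term. By Proposition~\ref{alignments}, we have
\[
\al_{nest}(\sigma) = \zero_{EE}(\mathcal{T}) + \zero_{NN}(\mathcal{T}),
\]
so $2\{\zero_{EE}(\mathcal{T})+\zero_{NN}(\mathcal{T})\} = 2\al_{nest}(\sigma)$.

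Next, I would decompose $\one(\mathcal{T})$. Every $1$ in $\mathcal{T}$ is either the topmost $1$ in its column or a superfluous $1$, and these two possibilities are mutually exclusive and exhaustive. Since condition~(1) in the definition of a permutation tableau of type $B$ forces every column to contain at least one $1$, the number of topmost $1$'s equals the number of columns, which is $n - \row(\mathcal{T})$. Combining this with Proposition~\ref{propdiagram}, which gives $\row(\mathcal{T})=\wex(\sigma)$ and $\so(\mathcal{T})=\crs(\sigma)$, we obtain
\[
\one(\mathcal{T}) = \so(\mathcal{T}) + (n - \row(\mathcal{T})) = \crs(\sigma) + n - \wex(\sigma).
\]

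Finally, substituting both identities into Theorem~\ref{bruhatinv} yields
\[
\ell(\sigma) = \inv(\sigma) = 2\al_{nest}(\sigma) + \crs(\sigma) + n - \wex(\sigma),
\]
as desired. There is no real obstacle here since every ingredient is already proved; the corollary is essentially a bookkeeping consequence of Theorem~\ref{bruhatinv}, Proposition~\ref{alignments}, and Proposition~\ref{propdiagram}.
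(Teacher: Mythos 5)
Your proposal is correct and matches the paper's intended argument: the paper derives the corollary from Theorem~\ref{bruhatinv} precisely by invoking Proposition~\ref{alignments} for the nesting term and Proposition~\ref{propdiagram} together with the decomposition $\one(\mathcal{T})=\so(\mathcal{T})+\col(\mathcal{T})$, $\col(\mathcal{T})=n-\row(\mathcal{T})$ (the same identity used in the proof of Theorem~\ref{sumofalcr}). Nothing is missing.
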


\section{Signed permutations and bare tableaux of type $B$}\label{bt}

In this section, we prove a result on the relation between signed permutations and bare tableaux of type $B$; we prove a theorem on the number of cycles of signed permutations, which extends a theorem by A. Burstein (Theorem 4.2 in \cite{AB}) and construct $\zeta_{bare}^{-1}:\sym_{n}^{B}\rightarrow\bt_{n}^{B}$ in an explicit way.

We adopt a new way to write a cycle in $\sym_{n}^{B}$ so that we can work in more detail with cycles:
For a cycle $(c_{1},c_{2},\ldots,c_{m})$ in $\sym_{n}^{B}$ and for each $x\in [m]$, with the convention $c_{m+1}=c_1$,
\begin{itemize}
  \item if $c_{x+1}<0$ then replace $c_x$ with $|c_x|, -|c_x|$,
  \item if $c_{x+1}>0$ then replace $c_x$ with $|c_x|$,
\end{itemize}
and replace the parentheses with brackets so that we have $(c_{1},c_{2},\ldots,c_{m})= \langle a_1, a_2, \dots, a_k \rangle$ where $k\geq m$. For example, the cycle $(2,-3,-1,4)$ becomes $\langle 2,-2,3,-3,1,4\rangle$ in our new notation, and $(2)=\langle 2 \rangle$, $(-2)=\langle 2, -2 \rangle$. We call this new cycle $\langle a_1, a_2, \dots, a_k \rangle$ a \emph{path cycle}.
A nice thing about `path cycle' is that the `zigzag path' from $\verb"row"\, a_i$ or $\verb"col"\, a_i$ ends at $\verb"row"\, a_{i+1}$ or $\verb"col"\, a_{i+1}$ in the corresponding bare tableau of $\langle a_1, a_2, \dots, a_k \rangle$, with the convention $a_{k+1}=a_1$. Note that the path cycle $\langle i \rangle$ sending $i$ to $i$, for a positive integer $i$, is corresponding to the zigzag path from $\verb"row"\,i$ when $\verb"row"\,i$ is a zero row. If we replace every cycle by its path cycle in a (full) cycle notation of $\sigma\in\sym_{n}^{B}$, then we obtain a \emph{(full) path cycle notation} of $\sigma\in\sym_{n}^{B}$. Hence the $\cyc(\sigma)$ is the same as the number of path cycles in the full path cycle notation of $\sigma$. Moreover, the number of identity cycles $(i)$ where $i>0$ in the full cycle notation of $\sigma$ and the number of single path cycles $\langle i \rangle$ in the full path cycle notation of $\sigma$ are the same.

\begin{theorem}\label{cycle}
  Let $\mathcal{T}\in\mathcal{BT}_{n}^{B}$, then
  \begin{equation*}
    \cyc(\zeta_{bare}(\mathcal{T}))=\dess(\mathcal{T})+\zerorow(\mathcal{T}).
  \end{equation*}
\end{theorem}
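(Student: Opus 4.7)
The plan is to establish a bijective correspondence
\[
\{\text{cycles of } \zeta_{bare}(\T)\} \longleftrightarrow \{\text{doubly essential 1's of } \T\} \sqcup \{\text{zero rows of } \T \text{ with positive labels}\}.
\]
The fixed-point part is already noted in the remark preceding this theorem: each zero row with positive label $i$ corresponds to the trivial path cycle $\langle i \rangle$, accounting for $\zerorow(\T)$ cycles of $\zeta_{bare}(\T)$. It therefore suffices to biject the remaining non-trivial path cycles $\langle a_{1}, \dots, a_{k} \rangle$, $k\ge 2$, with the doubly essential 1's of $\T$.

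The key observation is that at a doubly essential 1 located at position $(r, c)$, two distinct zigzag paths meet: the \emph{column-path} (from the top of $\verb"col"\,c$, which turns east at $(r,c)$ since this 1 is topmost in its column) and the \emph{row-path} (from the left of $\verb"row"\,r$, which turns south at $(r,c)$ since this 1 is leftmost in its row). I would argue that these two paths always lie within the same path cycle of $\sigma$: the column-path computes an appropriate value of $\sigma$ on $c$, and tracing forward through the path cycle notation (recording a sign change $|c|, -|c|$ precisely when we are at a diagonal 1) eventually returns to the row-path at $(r, c)$, closing the loop.

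For the reverse direction, given a non-trivial path cycle $\langle a_{1}, \dots, a_{k} \rangle$, its constituent zigzag paths form a closed loop in $\T$. I would show this loop must pass through at least one cell where two distinct zigzag paths of the cycle meet in \emph{opposition} (one turning south-to-east, the other east-to-south). The $0$-hinge condition forces such a meeting point to have no $1$ above it in its column and no $1$ to its left in its row, hence to be a doubly essential 1. Uniqueness follows by showing that two such meeting points in the same cycle would force a configuration that violates the $0$-hinge condition. A useful consistency check is that every diagonal 1 at $(-c, c)$ is automatically doubly essential, and it corresponds to a transition $\langle \dots, |c|, -|c|, \dots \rangle$ inside some path cycle; summed over all such transitions this recovers the expected $\n(\sigma)$ contribution.

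The main obstacle will be the existence and uniqueness argument for the doubly essential 1 attached to a non-trivial cycle, particularly the case analysis when the labels $a_{i}$ alternate in sign and the zigzag paths interact with diagonal 1's versus diagonal 0's in negative rows. As a complementary tool, the explicit construction of $\zeta_{bare}^{-1} : \sym_{n}^{B}\rightarrow\bt_{n}^{B}$ (promised in this section) can be used to give an inductive version of the argument: process the path cycles of $\sigma$ in a canonical order (for instance, by increasing minimum absolute value of their entries), and check at each step that a non-trivial cycle inserts exactly one doubly essential 1 into $\T$ while a positive fixed point inserts exactly one zero row.
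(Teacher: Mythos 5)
Your high-level decomposition is the same as the paper's: zero rows with positive labels account for the trivial path cycles $\langle i\rangle$, and the remaining task is to biject doubly essential $1$'s with non-trivial path cycles. The gap is in how you propose to carry out that bijection. Your central criterion --- that a cell where two zigzag paths meet ``in opposition'' (one turning south-to-east, the other east-to-south) is forced by the $0$-hinge condition to be a doubly essential $1$ --- is not correct. \emph{Every} $1$ in a bare tableau is such a meeting point: one zigzag path always arrives at it vertically and turns east, and a different one arrives horizontally and turns south (a single zigzag path is monotone southeast, so it cannot account for both visits). The $0$-hinge condition only guarantees that a $1$ cannot have simultaneously a $1$ above it and a $1$ to its left, i.e.\ that every $1$ is \emph{essential}; it does not make a generic meeting point doubly essential. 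Likewise, the existence and uniqueness of a doubly essential $1$ on each non-trivial cycle, and the claim that the row-path and column-path through a doubly essential $1$ at $(r,c)$ lie in the same path cycle, are stated as things you ``would show'' rather than proved; you correctly flag this as the main obstacle, but it is precisely the content of the theorem.

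The paper closes this gap with a different device: it forms a graph on the $1$'s of $\T$, joining two $1$'s that are adjacent in a row or column. The $0$-hinge condition forces this graph to be a forest (the southeast-most box of any putative cycle would need a $1$ above and a $1$ to its left), and each tree has a unique root, namely its unique doubly essential $1$. The bijection between trees and non-trivial path cycles is then proved by induction on the number of vertices of a tree: removing the root at $(r_1,r_2)$ leaves one or two subtrees whose path cycles are composed with $\langle r_1,r_2\rangle$ to produce the path cycle of the whole tree, with explicit formulas for the resulting cyclic word. Your proposed fallback --- an induction driven by the explicit construction of $\zeta_{bare}^{-1}$ --- is closer in spirit to this, but as written it inverts the logical order (that construction is justified in the paper \emph{using} the tree decomposition from this proof), so it cannot be invoked here without first establishing essentially the same forest structure. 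To repair your argument you would need either the forest/root lemma or an independent proof that each non-trivial path cycle contains exactly one box that is simultaneously the first turning point of the zigzag path from its row and of the zigzag path from its column.
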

\begin{proof}
Let $\sigma=\zeta_{bare}(\T)$. We draw a graph inside $\mathcal{T}$; $1$'s become vertices and two $1$'s are connected by an edge if they are adjacent vertices in a same row or in a same column, as it was done in \cite{JCA-AB-PN}. Note that every $1$ in a bare tableau is an essential $1$ because of the $0$-hinge condition. Hence, there can not be a cycle in the graph and we have a \emph{forest of (binary) trees}. We can observe that each tree (connected component) has a unique doubly essential $1$, which we let the \emph{root} of the tree. Thus, the number of trees in the graph is equal to the number of doubly essential $1$'s. Moreover, each positively labeled zero row in $\mathcal{T}$ is bijectively corresponding to an identity cycle $(i)$, hence, a single path cycle $\langle i \rangle$, $i\in[n]$. Therefore, it is enough to show that each tree in the graph is bijectively corresponding to a `non-single' path cycle in the full path cycle notation of $\sigma$.

We claim that each tree in the graph of a bare tableau is corresponding to a non-single path cycle in the full path cycle notation of $\sigma$, consisting of the labeling integers of the vertices in the tree through the zigzag path. We show the claim by induction on the number of vertices.

Let us first consider a tree with a single vertex, at box $(a_{1},b_{1})$; then the path cycle $\langle a_{1},b_{1} \rangle$ appears in the full path cycle notation of $\sigma$.

We now assume that the claim is true for a tree with at most $k$ vertices. Then let us consider a tree with $(k+1)$ vertices, whose root is in the box $(r_1, r_2)$. Consider the two cases; one is that the root in the box $(r_1, r_2)$ is connected with only one vertex in the box either $(r_1, r'_2)$ or $(r'_1, r_2)$, and the other is that the root is connected with two vertices in the boxes $(r_1, r'_2)$ and $(r'_1, r_2)$.

For the first case, removing the root in the box $(r_1, r_2)$ gives a subtree with $k$ vertices corresponding to a path cycle $\langle a_1, \ldots, a_s \rangle$ by the induction hypothesis. Note that the root of a tree corresponding to a path cycle $\langle c_1, \ldots, c_m \rangle$ is in the box $(\min{c_i},\max{c_i})$ because the root is the topmost and leftmost vertex in its tree and each box $(a,b)$ of a bare tableau satisfies $a<b$. Let $(a_m,a_M)$ be the box containing the root of the subtree, then we have either $r_1 = a_m$ or $r_2 = a_M$. Without loss of generality we assume that $r_1 = a_m$, then adding the vertex in the box $(r_1, r_2)$ gives an effect of compositing path cycles as $\langle a_1, \ldots, a_s \rangle \langle a_m, r_2 \rangle$ and this composition is the path cycle $\langle a_1, \ldots, a_m, r_2, a_{m+1}, \ldots, a_s \rangle$.

For the second case, removing the root in the box $(r_1, r_2)$ gives subtrees with $r$ vertices and $(k-r)$ vertices corresponding to path cycles $\langle a_1, \ldots, a_s \rangle$ and $\langle b_1, \ldots, b_t \rangle$, respectively, where $0<r<k$ by the induction hypothesis. Let $(a_m, a_M)$ and $(b_{m'}, b_{M'})$ be the boxes containing the roots of the subtrees, and we assume that the box $(a_m, a_M)$ is right of the box $(r_1, r_2)$ for convenience so we have $r_1 = a_m$ and $r_2 = b_{M'}$. Then, adding the vertex in the box $(r_1, r_2)$ gives an effect of compositing path cycles as $\langle a_1, \ldots, a_s \rangle \langle b_1, \ldots, b_t \rangle \langle a_m, b_{M'} \rangle$ and this composition is the path cycle $\langle a_{1},\ldots,a_{m},b_{M'+1},b_{M'+2},\ldots,b_{t},b_{1},\ldots,b_{M'},a_{m+1},\ldots,a_{s} \rangle$.

This proves the claim and therefore the proof of the theorem is completed.
\end{proof}

Note from the proof of Theorem~\ref{cycle}, that the root of a tree corresponding to the path cycle $\langle c_{1},\ldots,c_{u} \rangle$ is in the box $(r_{1},r_{2})$ such that $r_{1}=\min{c_{i}}$ and $r_{2}=\max{c_{i}}$. Moreover, when we make a tree by adding a root to connect two subtrees in the above proof, all labels of the subtree below (or right to) the root $(r_{1},r_{2})$ is placed to the right (or left, respectively) of  $r_{1}$ and the left (or right, respectively) of $r_{2}$ in the corresponding path cycle. From this idea, we give a construction of $\zeta_{bare}^{-1}$ in an explicit way. More precisely, we give a procedure to obtain a tree from each path cycle in the path cycle notation of a signed permutation.

Recall that the labeling set for the rows completely determines the shape of a shifted diagram. We let $\tilde{D}$ be the shifted diagram whose positive labeling set is $\wex(\sigma)$ and we define a bare tableau(filling) $\mathcal{T}$ of $\tilde{D}$ by specifying the boxes filled with $1$'s:

 \begin{itemize}
   \item Set $V=\varnothing$ and $C=\{\langle a_{1},a_{2},\ldots,a_{k} \rangle\}$.
   \item {\bf For} $\langle a_{1},\ldots,a_{l} \rangle\in C$ {\bf do}

         {\bf if} $l=1$, then {\bf let} $C:= C\setminus \{\langle a_1 \rangle\}$

        {\bf otherwise}

      \hspace{1cm}  choose $i,j\in [ l ]$ such that $a_{i}=\min\{a_1, \dots, a_l\}$ and $a_{j}=\max\{a_1, \dots, a_l\}$

      \hspace{1cm}  let $V:= V\cup \{(a_i,a_j)\}\mbox{ and, }$

      \hspace{1cm}    $C:=(C\setminus \{\langle a_1,\ldots,a_l \rangle\}) \cup \{\langle a_{i+1},\ldots,a_{j} \rangle, \langle a_1,\ldots,a_i,a_{j+1},\ldots,a_l \rangle\}$ if $i<j$,

      \hspace{1cm}   $C:=(C\setminus \{\langle a_1,\ldots,a_l \rangle\}) \cup \{\langle a_1,\ldots,a_j,a_{i+1},\ldots,a_l \rangle, \langle a_{j+1},\ldots,a_{i} \rangle\}$ if $i>j$.

      \noindent  {(end if $C=\varnothing$)}

   \item {\bf For} box $(\alpha,\beta)$ of $\tilde{D}$ {\bf do}

       {\bf if}  $(\alpha,\beta)\in V$, then fill in the  box $(\alpha,\beta)$ with $1$

       {\bf otherwise} fill in the  box $(\alpha,\beta)$ with $0$

 \end{itemize}

\begin{example}
 For $\sigma=(2,-3,-1,4)=\langle 2,-2,3,-3,1,4 \rangle\in\mathcal{BT}_{4}^{B}$, we initially let $V=\varnothing$ and $C=\{\langle 2,-2,3,-3,1,4 \rangle\}$.
 For $\langle 2,-2,3,-3,1,4 \rangle\in C$, the minimum and the maximum of $\{2,-2,3,-3,1,4\}$ are $-3$ and $4$ respectively. We, hence, have  $V=\{(-3,4)\}$ and $C=\{\langle 1,4 \rangle, \langle 2,-2,3,-3 \rangle\}$.
For $\langle 1,4 \rangle\in C$, $V$ becomes $\{(-3,4),(1,4)\}$ and $C=\{\langle 4 \rangle, \langle 1 \rangle, \langle 2,-2,3,-3 \rangle\}$.
For $\langle 4 \rangle\in C$, that is of length $1$, $C:=C\setminus \{\langle 4 \rangle\}=\{\langle 1 \rangle, \langle 2,-2,3,-3 \rangle\}$ and by considering the element $\langle 1 \rangle\in C$, $C$ becomes $\{\langle 2,-2,3,-3 \rangle\}$.
Now for $\langle 2,-2,3,-3 \rangle\in C$, since the minimum is $-3$ and the maximum is $3$, $V=\{(-3,4),(1,4),(-3,3)\}$ and $C=\{\langle 2,-2,3 \rangle, \langle -3 \rangle\}$. Then $C$ becomes $\{\langle 2,-2,3 \rangle\}$ since $\langle -3 \rangle$ is of
length $1$.
If we keep following the process then we finally have $V=\{(-3,4),(1,4),(-3,3),(-2,3),(-2,2)\}$ and $C=\varnothing$. Hence, the bare tableau $\mathcal{T}=\zeta_{bare}^{-1}(\sigma)$ of type $B$ is in Figure~\ref{BTfromperm}. One can check that $\zeta_{bare}(\mathcal{T})=\sigma$.
 \begin{figure}[!ht]
   \begin{tikzpicture}\tiny
     \tikzstyle{Element} = [draw, minimum width=4mm, minimum height=4mm, inner sep=0pt]
      \node [Element] [label=left:-4] at (1.0,2.0) {0};
      \node [Element] [label=left:-3] at (1.0,1.6) {1};
      \node [Element] [label=left:-2] at (1.0,1.2) {0};
      \node [Element] [label=left:1] at (1.0,0.8) {1};

      \node [Element] at (1.4,1.6) {1};
      \node [Element] at (1.4,1.2) {1};
      \node [Element] at (1.4,0.8) {0};

      \node [Element] at (1.8,1.2) {1};
      \node [Element] at (1.8,0.8) {0};
   \end{tikzpicture}\normalsize\vspace{-3mm}
   \caption{The bare tableau $\mathcal{T}=\zeta_{bare}^{-1}(\sigma)$}\label{BTfromperm}
 \end{figure}
\end{example}

\bibliographystyle{amsplain}
\bibliography{PTB}

\end{document}